    \definecolor{linkcolor}{RGB}{0,0,128}
\pgfplotsset{compat=1.18} 
\theoremstyle{plain}
\newtheorem{prototheorem}{theorem}
\newtheorem{theorem}[prototheorem]{Theorem}
\theoremstyle{plain}
\newtheorem{prototheorem2}{theorem}
\newtheorem{problem}[prototheorem2]{Problem}
\theoremstyle{plain}
\newtheorem{prototheorem3}{theorem}
\newtheorem{lemma}[prototheorem3]{Lemma}
\theoremstyle{remark}
\newtheorem{remark}{Remark}
\theoremstyle{definition}
\newcommand{\eps}{\varepsilon}
\newcommand{\J}{\mathrm{Couplings}}
\newcommand{\law}{\operatorname{Law}}
\newcommand{\TV}{\mathsf{TV}}
\newcommand{\rn}[1]{\Romanbar{#1}}
\newcommand{\tmix}{\tau_{\operatorname{mix}}}
\newcommand{\ind}{\mathbf{1}}
\newcommand{\diam}{\operatorname{diam}}
\newcommand{\AR}{\mathrm{a/r}}
\newcommand{\A}{\mathrm{a}}
\newcommand{\R}{\mathrm{r}}
\newcommand{\Mult}{\mathrm{Multinoulli}}
\newcommand{\Unif}{\mathrm{Unif}}
\newcommand{\NUTS}{\mathrm{NUTS}}
\title{Mixing of the No-U-Turn Sampler and the \\ Geometry of Gaussian Concentration}
\author{Nawaf Bou-Rabee\thanks{Department of Mathematical Sciences, Rutgers University, \href{mailto:nawaf.bourabee@rutgers.edu}  {\texttt{nawaf.bourabee@rutgers.edu}} \orcidlink{0000-0001-9280-9808}}
\and
Stefan Oberd\"orster\thanks{Institute for Applied Mathematics, University of Bonn, 
\href{mailto:oberdoerster@uni-bonn.de}{\texttt{oberdoerster@uni-bonn.de}} \orcidlink{0009-0004-0734-0101}}
}
\date{October 2024}
\begin{document}

\maketitle

\begin{abstract}
We prove that the mixing time of the No-U-Turn Sampler (NUTS), when initialized in the concentration region of the canonical Gaussian measure, scales as $d^{1/4}$, up to logarithmic factors, where $d$ is the dimension.  This scaling is expected to be sharp.  The result is based on a coupling argument that leverages the geometric structure of the target distribution.  Specifically, concentration of measure results in a striking uniformity in NUTS' locally adapted transitions, which holds with high probability.  This  uniformity is formalized by interpreting NUTS as an accept/reject Markov chain, where the mixing properties for the more uniform accept chain are analytically tractable.  Additionally, our analysis uncovers a previously unnoticed issue with the path length adaptation procedure of NUTS, specifically related to looping behavior, which we address in detail.
\end{abstract}

\section{Introduction}

For the canonical Gaussian measure $\gamma$ on $\mathbb R^d$,  the solution to the isoperimetric problem implies that for any set $A$ with $\gamma(A)\geq1/2$, the $r$-neighborhood of $A$, denoted by $A_r$, satisfies $\gamma(A_r)\geq1-\exp(-r^2/2)$.
Equivalently, all sufficiently regular functions are almost constant, or very close to their means, on almost all of space.
For the norm, this translates into
\begin{equation}\label{eq:Gauss_conc}
    \gamma\bigr(\bigr||x|-\sqrt d\bigr|>r\bigr)\ \leq\ 2\,\exp(-r^2/2)\;,
\end{equation}
which shows that $\gamma$  concentrates within a thin spherical shell around the $(d-1)$-sphere of radius $\sqrt{d}$, i.e., $\sqrt d\,\mathcal S^{d-1}$  \cite{Talagrand_NewLook,TalagrandLedoux_ProbabilityIn}. This paper studies the consequences of this concentration of measure phenomenon on the mixing time of the No-U-Turn sampler (NUTS).

NUTS is a Markov chain Monte Carlo (MCMC) method designed to approximately sample from an absolutely continuous target probability measure with  a continuously differentiable density  \cite{HoGe2014,betancourt2017conceptual, carpenter2016stan}.  It builds upon Hamiltonian Monte Carlo (HMC)  \cite{DuKePeRo1987,Ne2011}, which has long been known to be sensitive to path length tuning, as noted by Mackenzie in the late 1980s \cite{Ma1989}.  A quarter-century later, Hoffman and Gelman introduced NUTS as a solution to this path length tuning problem.  
Owing to this self-tuning capability, NUTS has become the default sampler in many probabilistic programming environments \cite{carpenter2016stan,salvatier2016probabilistic,nimble-article:2017,ge2018t,phan2019composable}.

In each transition step, NUTS begins by drawing a new velocity from the Gaussian measure $\gamma$.  From this velocity and the current state of the chain, NUTS generates a trajectory by repeatedly applying the leapfrog integrator, producing a sequence of states known as the leapfrog orbit. The orbit expands either forward or backward in time with equal probability, and its length doubles with each expansion (see Figure~\ref{fig:NUTS_transition_step}).  This process, referred to as \emph{orbit selection}, continues until a 'U-turn' is detected, which aims to signal that the leapfrog orbit has likely reached the limit of efficient exploration.  

Once an orbit is selected, the next state of the NUTS chain is chosen through a process called \emph{index selection}, where a state from the leapfrog orbit is selected with probability proportional to its Boltzmann weight.  Due to discretization errors in the leapfrog integrator, these Boltzmann weights are not uniform.  It is important to note that both the orbit and index selections depend on the current state of the chain.

This intricate architecture presents significant challenges for its theoretical analysis and understanding.  Even proving its reversibility remains an ongoing area of research \cite{andrieu2020general,durmus2023convergence}, with a concise proof only recently established \cite{BouRabeeCarpenterMarsden2024}. The key insight from \cite{BouRabeeCarpenterMarsden2024} is to interpret NUTS as an auxiliary variable method, where the auxiliary variables consist of: the velocity,  the orbit,  and the index, as detailed in Section~\ref{sec:NUTS}.  In this precise sense, NUTS builds on HMC by introducing additional auxiliary variables that enable local adaptation of the path length.

Fundamental questions regarding the mixing time of NUTS have remained unresolved, and  quantitative guarantees for its convergence have yet to be established.  Although existing methods such as couplings \cite{BouRabeeOberdoerster2024,BouRabeeEberle2023,BouRabeeSchuh2023,BoEbZi2020,Eberle14}, conductance \cite{Chen_Minimax,chen2023,apers2022,chewi2021optimal,LST2020}, and functional inequalities \cite{EberleLoerler24,CaoLuWang23,monmarche2024entropic,camrud2023second} have proven successful in the analysis of HMC, these methods have not been extended to account for the additional intricacies of NUTS.

In this paper, we focus on the canonical Gaussian measure $\gamma$ as a central example for a systematic analysis of the mixing time of NUTS.
We rigorously establish novel results on NUTS initialized in the region of concentration of $\gamma$, providing deeper insights into its behavior.  First, we prove a striking uniformity in NUTS' path length adaptation within this concentration region, allowing for a reduction to state-independent path length selection.  Second, we reinterpret NUTS' index selection as a Metropolis-like filter, linking NUTS to the literature on Metropolis-adjusted HMC \cite{Chen_Minimax,BouRabeeOberdoerster2024}.  Finally, we extend a recently developed coupling framework for analyzing the mixing times of Metropolis-adjusted Markov chains \cite{BouRabeeOberdoerster2024} to include NUTS, taking into account the geometric structure of the target distribution $\gamma$.

Building on these insights, our main result provides the first quantitative mixing time guarantee for NUTS when initialized near the concentration region of the canonical Gaussian measure $\gamma$.  Importantly, we require no assumptions on the path length, as NUTS adapts it locally.  This result demonstrates the effectiveness of the U-turn-based approach in the isotropic geometry of $\gamma$ and contributes to a deeper understanding of the behavior of NUTS in high-dimensional sampling problems. Additionally, we highlight a previously unnoticed issue in NUTS' path length adaptation, which was uncovered during the analysis.

\begin{figure}
\centering
\includegraphics[width=0.3\textwidth]{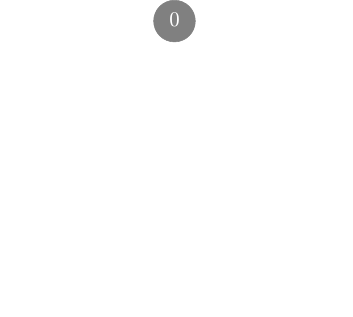}
\hspace{10pt}
\includegraphics[width=0.3\textwidth]{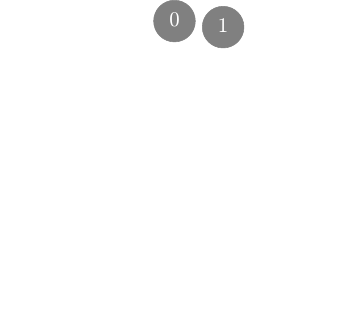}
\hspace{10pt}
\includegraphics[width=0.3\textwidth]{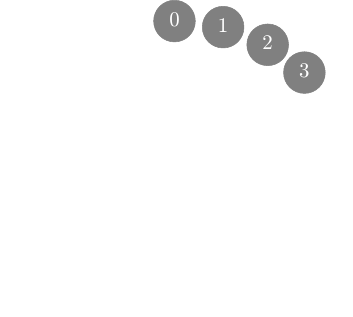} \\
\vspace{-60pt}
\includegraphics[width=0.3\textwidth] {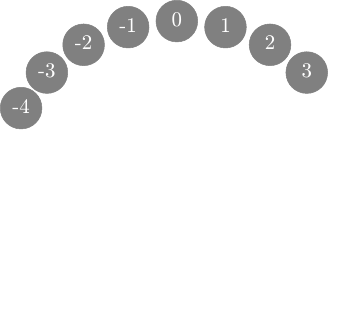} \hspace{10pt}
\includegraphics[width=0.3\textwidth]{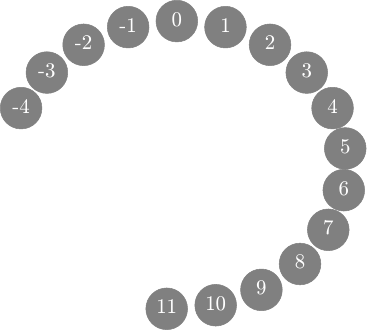}
\hspace{10pt}
\includegraphics[width=0.3\textwidth]{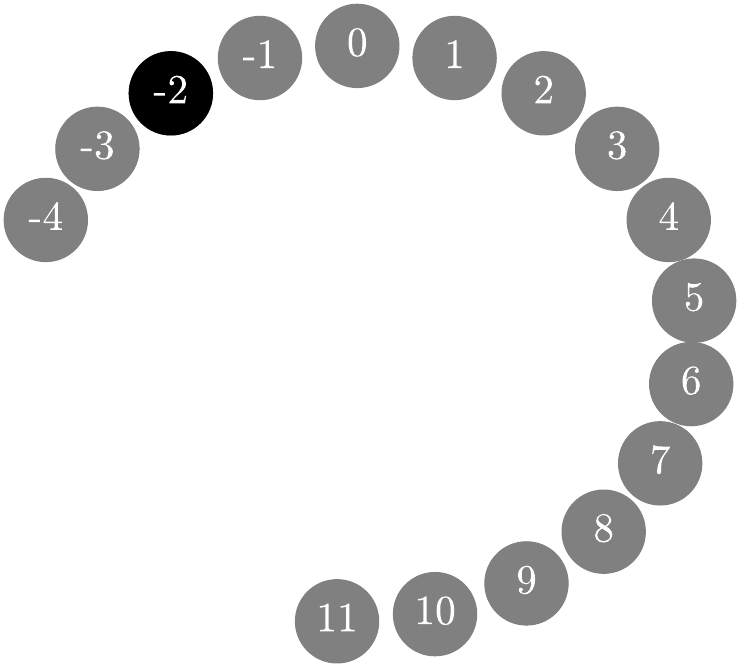} 
\caption{\textit{
In each transition step, NUTS iteratively builds a leapfrog orbit starting from the initial state, labeled as $0$. At each iteration, the orbit is doubled either forward or backward in time with equal probability, continuing until a U-turn is detected. Once a U-turn occurs, the next state of the NUTS chain is stochastically selected from the final orbit with sampling probabilities determined by the energies of the corresponding leapfrog iterates, as described in Algorithm~\ref{algo:NUTS}.
}}
\label{fig:NUTS_transition_step}
\end{figure}

\subsection{The No-U-Turn Sampler}\label{sec:NUTS}

Let $\mu(dx)\propto\exp(-U(x))\,dx$ be a given target probability measure on $\mathbb R^d$ where $U: \mathbb{R}^d \to \mathbb{R}$ is continuously differentiable.  Define $\phi_t:\mathbb{R}^{2d}\to\mathbb{R}^{2d}$ as the flow of Hamilton's equations for the Hamiltonian function
\[ H(x,v)\ =\ U(x)\ +\ |v|^2/2 \]
where $x \in \mathbb{R}^d$ and $v \in \mathbb{R}^d$ represent position and velocity, respectively.  Fix a step size $h>0$.  Let $\Phi_h: \mathbb{R}^{2d} \to \mathbb{R}^{2d}$ denote one leapfrog step of size $h$  for this flow.  For any $L \in \mathbb{Z}$, define $\Phi_{h}^L:  \mathbb{R}^{2d} \to \mathbb{R}^{2d}$ recursively by: $\Phi^0_{h}$ is the identity map, and $\Phi^{L + 1}_{h} = \Phi_{h} \circ \Phi^L_{h}$.

We now describe the transition kernel of NUTS in detail.  Given the current state $x\in\mathbb R^d$, NUTS first draws an initial velocity $v\sim\gamma$.  Next, it performs an orbit selection and then an index selection, which together determine the next state of the chain.  We begin by explaining the no-U-turn rule.

\paragraph{U-turn property.}   Define an \emph{index orbit} as a set of consecutive integers $I\subset\mathbb Z$ of length $|I|\in2^{\mathbb N} =\{2^n\}_{n\in\mathbb N}$.  Let  $\mathfrak I_0(k)$, for $k\in\mathbb N$, denote the collection of index orbits of fixed length $2^k$ that include the integer $0$.  Given initial conditions $(x,v)\in\mathbb R^{2d}$, an index orbit defines a \emph{leapfrog orbit} $\{\Phi_h^i(x,v)\}_{i\in I}$, where the total physical time from $\Phi_h^{\min I}(x,v)$ to $\Phi_h^{\max I}(x,v)$ is $h(|I|-1)$.

An index orbit $I$ is said to have the \emph{U-turn property} if
\begin{equation}\label{eq:u-turn}
    v_+\cdot(x_+-x_-)\ <\ 0\quad\text{or}\quad v_-\cdot(x_+-x_-)\ <\ 0\;,
\end{equation}
where $(x_+,v_+)=\Phi_h^{\max I}(x,v)$ and $(x_-,v_-)=\Phi_h^{\min I}(x,v)$. Note that, as shown in Figure~\ref{fig:u-turn}, the U-turn property depends only on the endpoints of the orbit and is not centered at the initial condition $(x,v)$.  This aspect is crucial for ensuring the reversibility of NUTS \cite[Cor 5]{BouRabeeCarpenterMarsden2024}.

Given an index orbit $I$, define $\mathfrak I(I)$ as the collection of index orbits obtained by iteratively halving $I$
\[
\mathfrak{I}(I)\ =\ \bigr\{ I_{j,m}\ :\ j \in \{ 0, 1, \dots, k \}, \, m \in \{ 0, 1, \dots, 2^j - 1\}  \bigr\}
\]
where \( I_{j,m} \) is the \( m \)-th sub-index orbit at the \( j \)-th halving step, defined as
\[
I_{j,m}\ =\ \bigr\{ i \in I\ :\ \min I  + m\,2^{-j}|I|\ \leq\ i\ <\ \min I + (m+1)\,2^{-j}|I|\bigr\}\,.
\]
At each level \( j \), \( I \) is divided into \( 2^j \) sub-index orbits, each of length \( 2^{k-j} \).
 We say that an index orbit $I$ has the \emph{sub-U-turn property} if any index orbit in $\mathfrak I(I)$ has the U-turn property.

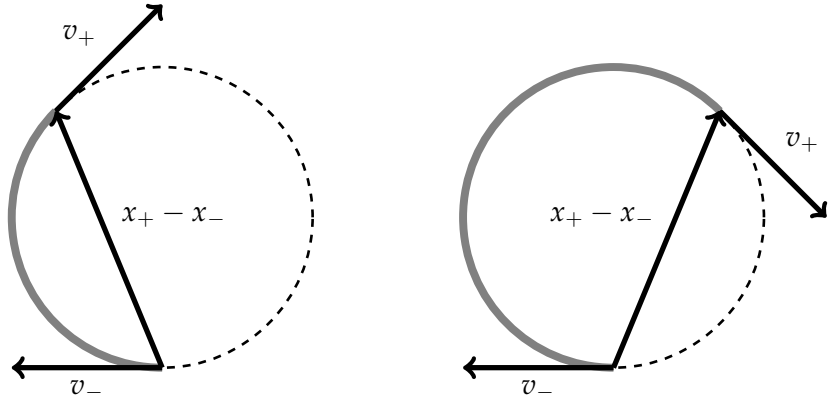
\begin{figure}
\centering
\begin{tikzpicture}[scale=1]
\clip (-0.5,-0.5) rectangle ({7+2*cos(pi/4 r)+sqrt(2)+0.5+1},{2+2*sin(3*pi/4 r)+sqrt(2)+0.5});

\draw[black, line width=1pt, dashed] (2,2) circle (2);
\draw[black, line width=1pt, dashed] (7+1,2) circle (2);

\draw[gray, line width=3pt] (2,0) arc (270:135:2);
\draw[gray, line width=3pt] (7+1,0) arc (270:45:2);

\draw[black, line width=2pt, ->] (2,0) -- (0,0) node[anchor=north, pos=0.5]{$v_-$};
\draw[black, line width=2pt, ->] (7+1,0) -- (5+1,0) node[anchor=north, pos=0.5]{$v_-$};

\draw[black, line width=2pt, ->] ({2+2*cos(3*pi/4 r)},{2+2*sin(3*pi/4 r)}) -- ({2+2*cos(3*pi/4 r)+sqrt(2)},{2+2*sin(3*pi/4 r)+sqrt(2)}) node[anchor=south east, pos=0.5]{$v_+$};
\draw[black, line width=2pt, ->] ({7+2*cos(pi/4 r)+1},{2+2*sin(pi/4 r)}) -- ({7+2*cos(pi/4 r)+sqrt(2)+1},{2+2*sin(pi/4 r)-sqrt(2)}) node[anchor=south west, pos=0.5]{$v_+$};
    
\draw[black, line width=2pt, ->] (2,0) -- ({2+2*cos(3*pi/4 r)},{2+2*sin(3*pi/4 r)}) node[anchor=south west, pos=0.5]{$x_+-x_-$};
\draw[black, line width=2pt, ->] (7+1,0) -- ({7+2*cos(pi/4 r)+1},{2+2*sin(pi/4 r)}) node[anchor=south east, pos=0.5]{$x_+-x_-$};
\end{tikzpicture}
\caption{\textit{Two orbits on $\sqrt d\,\mathcal S^{d-1}$ following the $2\pi$-periodic exact Hamiltonian flow: The left orbit of path length in $[0,\pi]$ does not exhibit the U-turn property \eqref{eq:u-turn}, while the right orbit, with a path length in $(\pi,2\pi)$, does.  In this idealized setting (starting on the $(d-1)$-sphere with tangential velocity of correct magnitude following the exact flow), the U-turn property of an orbit depends only on the orbit's path length and is uniform in the initial position $x$, consistent with the rotational symmetry of the sphere.  In the realistic setting, where $x\in D_\alpha$, $v\sim\gamma$ and  the leapfrog integrator is used, local effects emerge, as shown in Figure \ref{fig:sin}.}}
\label{fig:u-turn}
\end{figure}

\paragraph{Orbit selection.}  Given $(x,v)\in\mathbb R^{2d}$ and a maximal index orbit length of $2^{k_{\mathrm{max}}}$ where $k_{\mathrm{max}} \in \mathbb{N}$, the orbit selection procedure in NUTS proceeds iteratively as follows, starting with $I_0=\{0\}$: 
\begin{itemize}
\item For the current orbit $I_j$, draw an extension $I'$ uniformly from the set $\{I_j-|I_j|,I_j+|I_j|\}$.  
\item If  $I'$ has the sub-U-turn property, stop the process and select $I_j$ as the final orbit.  
\item If $I'$ does not have the sub-U-turn property, extend the orbit by setting $I_{j+1}=I_j\cup I'$.  If $I_{j+1}$ satisfies the U-turn property or  $|I_{j+1}|=2^{k_{\mathrm{max}}}$, stop the process and select $I_{j+1}$ as the final orbit. 
\item Otherwise, repeat the process with $I_{j+1}$ as the current orbit. 
\end{itemize}
This iterative procedure generates a sample $I$ from a probability distribution $\mathcal{O}(x,v)$ defined over the collection of all index orbits containing $0$.

\paragraph{Index selection.}
After sampling an index orbit $I$, an index $L$ is drawn from a Multinoulli distribution, which generalizes the Bernoulli distribution to arbitrary index sets. Specifically, a random variable $L$  follows the Multinoulli distribution  $L \sim\Mult(a_i)_{i\in I}$ with weights $(a_i)_{i\in I}$, if for each $i\in I$, the probability of selecting $i$ is \[
\mathbb P(L=i)\ =\ a_i\,\bigr(\sum\nolimits_{i\in I}a_i\bigr)^{-1} \;. 
\] In NUTS, these sampling probabilities are based on the energies of the corresponding leapfrog iterates. Specifically given a sampled index orbit $I$, the index $L$ is sampled as
\begin{equation} \label{eq:multinoulli} L\ \sim\ \Mult\bigr(e^{-(H\circ\Phi_h^i-H)(x,v)}\bigr)_{i\in I} \;.
\end{equation}

\paragraph{Transition kernel.}  Given the current state $x$, the sampled velocity $v$, and the sampled index $L$, the next state of the NUTS chain is defined as $\Pi(\Phi_h^L(x,v))$, where $\Pi: \mathbb{R}^{2d} \to \mathbb{R}^d$ is the projection onto the position component, i.e., $\Pi(x,v) = x$ for all $(x,v) \in \mathbb{R}^{2d}$.

In summary, given the current state $x \in \mathbb{R}^d$, a step size $h>0$, and a maximum number of doublings $k_{\mathrm{max}} \in \mathbb{N}$,  a complete NUTS transition step is given by:

\begin{algorithm}[H] 
\caption{$X\sim\pi_{\NUTS}(x,\cdot)$}\label{algo:NUTS}
1. Velocity refreshment:  $v\sim\gamma$.\\
2. Orbit selection:  $I\sim\mathcal{O}(x,v)$.\\
3. Index selection:  $L\sim\Mult\bigr(e^{-(H\circ\Phi_h^i-H)(x,v)}\bigr)_{i\in I}$. \\ 4. Output: $X=\Pi( \Phi_h^L(x,v) ) $.
\end{algorithm}

Algorithm~\ref{algo:NUTS} suggests that NUTS can be interpreted as an auxiliary variable method, as defined in \cite[Sec~2.1]{BouRabeeCarpenterMarsden2024}. The auxiliary variables consist of: the velocity $v$, which is sampled independently in the first step; the index orbit $I$, sampled conditionally on the position and velocity in the second step; and the index $L$, sampled conditionally on the preceding variables in the third step of Algorithm~\ref{algo:NUTS}.  After these steps, a Metropolis step is applied in the enlarged space using a deterministic proposal given by the measure-preserving involution: $(x,v,I,L) \mapsto (\mathcal{S} \circ \Phi_h^L(x, v), -(I-L), L)$ where $\mathcal{S}(x,v) = (x,-v)$ \cite[Equation~(25)]{BouRabeeCarpenterMarsden2024}.  Notably,  this proposal is  always accepted in the fourth step, and the auxiliary variables are discarded once used.   By Corollary 5 of \cite{BouRabeeCarpenterMarsden2024},  the following holds.

\medskip

\begin{theorem}\label{thm:rev}
The transition kernel $\pi_{\NUTS}$ is reversible with respect to the target distribution $\mu$. 
\end{theorem}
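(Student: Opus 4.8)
The plan is to make rigorous the auxiliary variable interpretation of Algorithm~\ref{algo:NUTS} described just above and to invoke the standard fact that a deterministic, measure-preserving involution on an extended space, placed between a lifting map and a projection, induces a reversible kernel on the base space. Work on the extended space of configurations $(x,v,I,L)$ with $(x,v)\in\mathbb{R}^{2d}$, $I$ an index orbit, and $L\in I$, equipped with the reference measure $dx\,dv$ tensored with counting measure in $(I,L)$, and define the lifted target
\[
\bar\mu(dx,dv,I,L)\ \propto\ e^{-H(x,v)}\;\mathcal{O}(x,v)(I)\;\frac{e^{-(H\circ\Phi_h^L-H)(x,v)}}{\sum_{i\in I}e^{-(H\circ\Phi_h^i-H)(x,v)}}\;dx\,dv \,.
\]
This is exactly the joint law of $(x,v,I,L)$ produced by Steps~1--3 when $x\sim\mu$, and its $x$-marginal is $\mu$. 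Step~4 applies the deterministic map $T(x,v,I,L)=\bigl(\mathcal{S}\circ\Phi_h^L(x,v),\,-(I-L),\,L\bigr)$ and discards the auxiliary variables. Thus reversibility of $\pi_{\NUTS}$ with respect to $\mu$ reduces to: (a) $T$ is an involution that preserves the reference measure; and (b) $T$ leaves $\bar\mu$ invariant, so that the Metropolis ratio for this proposal is identically $1$ and the move is always accepted.

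Claim (a) is routine. Volume preservation holds because $\Phi_h$ is symplectic (hence Lebesgue preserving), $\mathcal{S}$ is an isometry, and $(I,L)\mapsto(-(I-L),L)$ is a bijection of index pairs. The involution property uses leapfrog reversibility $\mathcal{S}\circ\Phi_h^{-1}\circ\mathcal{S}=\Phi_h$, which gives $\Phi_h^{L}\circ\mathcal{S}\circ\Phi_h^{L}=\mathcal{S}$ and $\Phi_h^{j}\circ\mathcal{S}\circ\Phi_h^{L}=\mathcal{S}\circ\Phi_h^{L-j}$; applying $T$ twice then returns the position–velocity coordinate to $(x,v)$, the orbit to $-(-(I-L)-L)=I$, and the index to $L$. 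For claim (b), expand $\bar\mu\bigl(T(x,v,I,L)\bigr)$ using $H\circ\mathcal{S}=H$ and the identities above together with the reindexing $i\leftrightarrow L-i$: the energy factors recombine so that every factor manifestly matches the corresponding one in $\bar\mu(x,v,I,L)$ except the orbit-selection probability, leaving the single identity
\[
\mathcal{O}\bigl(\mathcal{S}\circ\Phi_h^{L}(x,v)\bigr)\bigl(-(I-L)\bigr)\ =\ \mathcal{O}(x,v)(I)
\]
to be established.

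This orbit-selection symmetry is the crux of the argument: the iterative doubling procedure, started from the time-reversed, re-based state $\mathcal{S}\circ\Phi_h^{L}(x,v)$, must produce the reversed index orbit $-(I-L)$ with precisely the probability the procedure started from $(x,v)$ assigns to $I$. The proof rests on three observations. First, by $\Phi_h^{j}\circ\mathcal{S}\circ\Phi_h^{L}=\mathcal{S}\circ\Phi_h^{L-j}$, the leapfrog orbit generated from $\mathcal{S}\circ\Phi_h^{L}(x,v)$ over $-(I-L)$ coincides, point by point after applying $\mathcal{S}$, with the one generated from $(x,v)$ over $I$, and the bijection $i\mapsto L-i$ carries dyadic sub-orbits to dyadic sub-orbits. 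Second, the U-turn criterion~\eqref{eq:u-turn} is invariant under simultaneously swapping the two endpoints of an orbit and flipping both of their velocities, since $v_+\cdot(x_+-x_-)$ and $v_-\cdot(x_+-x_-)$ are interchanged and each changes sign twice; hence every sub-orbit satisfies the (sub-)U-turn property under one labeling iff it does under the other, and the stopping rule based on reaching length $2^{k_{\mathrm{max}}}$ depends only on the shared length. Third, the random extension is symmetric: a forward doubling in the $(x,v)$-procedure corresponds to a backward doubling in the $\mathcal{S}\circ\Phi_h^{L}(x,v)$-procedure and vice versa, each taken with probability $1/2$, so the two doubling histories match bijectively and carry equal probability. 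I expect this step to be the main obstacle, and it is exactly where the endpoint-based, non-initial-condition-centered definition of the U-turn rule is indispensable. Combined with claim (a) and the energy cancellations, it yields $T_{\#}\bar\mu=\bar\mu$; the extended kernel is then a deterministic $\bar\mu$-preserving involution, hence $\bar\mu$-reversible, and marginalizing over $(v,I,L)$ gives reversibility of $\pi_{\NUTS}$ with respect to $\mu$, recovering \cite[Cor.~5]{BouRabeeCarpenterMarsden2024}.
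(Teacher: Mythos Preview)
Your overall framework is correct and is precisely the auxiliary-variable/involution approach the paper describes just before the theorem and attributes to \cite[Cor.~5]{BouRabeeCarpenterMarsden2024}; the energy cancellations, the involution property, and volume preservation are all fine. There is, however, a real gap in your verification of the orbit-selection symmetry
\[
\mathcal{O}\bigl(\mathcal{S}\circ\Phi_h^{L}(x,v)\bigr)\bigl(-(I-L)\bigr)\ =\ \mathcal{O}(x,v)(I)\,.
\]

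Your third observation claims that a forward doubling in the $(x,v)$-procedure corresponds to a backward doubling in the $\mathcal{S}\circ\Phi_h^L(x,v)$-procedure, so that ``the two doubling histories match bijectively.'' This is not what happens: the build-up from $(x,v)$ passes through the dyadic sub-orbits of $I$ containing the index $0$, while the build-up from $\mathcal{S}\circ\Phi_h^L(x,v)$ passes (in the original labeling) through those containing $L$; for $L$ in a different dyadic half than $0$ these intermediate orbits differ. For instance, with $I=\{0,1,2,3\}$ and $L=2$, the first procedure goes $\{0\}\to\{0,1\}\to I$ while the second (translated back) goes $\{2\}\to\{2,3\}\to I$; the intermediate orbits $\{0,1\}$ and $\{2,3\}$ are distinct and are tested for U-turns at different stages. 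So the two histories are not related by a simple forward/backward swap, and one cannot conclude equality of probabilities from a step-by-step correspondence.

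What actually makes the identity hold is a structural fact you have not stated: for \emph{any} starting index $j\in I$, the condition ``the doubling procedure reaches $I$ without stopping'' is equivalent to ``no proper dyadic sub-orbit of $I$ has the U-turn property.'' This is because the sub-U-turn checks on the successive extensions, together with the U-turn checks on the intermediate orbits, sweep over \emph{all} proper dyadic sub-orbits of $I$ regardless of which leaf of the dyadic tree one starts from. Combined with the fact that the unique history to $I$ has probability $2^{-|I|}$ in either case, and that the stopping factor at $I$ (U-turn/max-length of $I$, or sub-U-turn of one of its two extensions drawn with probability $1/2$ each) is symmetric under swapping the two extensions---which is exactly what the map $j\mapsto L-j$ does---the orbit-selection symmetry follows. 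Once this point is supplied, the rest of your argument goes through.
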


\paragraph{Locally adaptive HMC.} Given $x \in \mathbb{R}^d$, a Markov kernel $\pi$ is called a \emph{locally adaptive HMC method} if its transitions $X \sim \pi(x,\cdot)$ take the form
\begin{equation}\label{eq:HMCkernel}
    X\ =\ \Pi( \Phi_h^{T/h}(x,v)) \quad\text{with } v\sim\gamma~\text{and}~T\sim\tau_{x,v},
\end{equation}
where $\tau_{x,v}$ is a probability measure on $h\,\mathbb Z$, which may depend on $x$ and $v$.  This  framework covers standard HMC with fixed or randomized path lengths. NUTS also fits within this framework, with $\tau_{x,v}=\law(hL)$, i.e., the distribution of the path length based on the random index $L$.

\subsection{Main Result: Mixing of NUTS}

The spherical geometry of Gaussian concentration is incorporated into the analysis by considering spherical shells of the form
\begin{equation}\label{eq:D_alpha}
	D_{\alpha}\ =\ \bigr\{x \in \mathbb{R}^d \,:\,||x|^2-d|\leq \alpha\bigr\}
\end{equation}
for $\alpha\leq d$.  The Gaussian measure $\gamma$ is highly concentrated in these shells, as quantified by
\begin{equation}\label{eq:Gaussian_HW}
	\gamma(D_\alpha^c)\ \leq\ 2\,\exp(-\alpha^2/8d) \;.
\end{equation}
This result is proven in Lemma \ref{lem:E}, cf. \cite{Vershynin}.

The following theorem provides a mixing time guarantee for NUTS when the target distribution is the high-dimensional canonical Gaussian measure $\gamma$.  Let $\mathcal P(\mathbb R^d)$ denote the set of probability distributions on $\mathbb R^d$ and $2^{\mathbb N}=\{2^n\}_{n\in\mathbb N}$.

\medskip

\begin{theorem}[Main Result] \label{thm:NUTSmixing}
Let $\eps,\alpha_0>0$ and $\nu\in\mathcal P(\mathbb R^d)$ be such that $\max\bigr(\nu(D_{\alpha_0}^c),\,\gamma(D_{\alpha_0}^c)\bigr)\leq\eps/8$.
Let $k_{\mathrm{max}}\in\mathbb N$ and assume $h(2^{k_{\mathrm{max}}}-1)>C_1$ for some absolute constant $C_1>0$.
Treating double-logarithmic factors in $d$ and $\eps^{-1}$ as absolute constants, there exist absolute constants $c_1,C_2,C_3>0$ such that for all
\begin{equation}\label{eq:thm_hbar}
    h\ \leq\ \bar h\ =\ c_1\,\min\bigr(\alpha_0^{-1/2},\,d^{-1/4}\log^{-1/2}d\log^{-3/4}\eps^{-1}\bigr)\log^{-1/2}d
\end{equation}
satisfying the condition
\begin{equation}\label{eq:thm_h_rest}
    h(2^{\mathbb N}-1)\ \cap\ \bigr((0,\delta)\cup(\pi-\delta,\pi+\delta]\bigr)\ =\ \emptyset ~~ \text{with} ~~
    \delta\ =\ C_2\,\max\bigr(d^{-1} \alpha_0,\,d^{-1/2}\log d\log^{3/2}\eps^{-1}\bigr)\;,
\end{equation}
the total variation mixing time of NUTS with respect to the canonical Gaussian measure $\gamma$ starting from $\nu$ to accuracy $\eps$ satisfies
\begin{equation}\label{eq:thm_mix}
    \tmix(\eps,\nu)\ =\ \inf\bigr\{ n \in \mathbb{N}\,:\,\TV\bigr(\nu\pi_{\NUTS}^n,\,\mu\bigr)\leq \eps \bigr\}\ \leq\ C_3\log d\log\eps^{-1}\;.
\end{equation}
\end{theorem}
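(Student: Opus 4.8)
The plan is to realize NUTS as a member of the locally adaptive HMC family \eqref{eq:HMCkernel} and to exploit the near-rotational symmetry on the shells $D_\alpha$ to reduce the analysis to something close to random-walk HMC on the sphere $\sqrt d\,\mathcal S^{d-1}$, for which a one-step coupling contracts the squared norm and, simultaneously, the angular part. First I would set up a high-probability ``good event'' $G$: starting from $\nu$ with $\nu(D_{\alpha_0}^c)\le\eps/8$, after one velocity refreshment $v\sim\gamma$ the pair $(x,v)$ lies, with probability $\ge1-O(\eps)$, in a region where (i) $|x|^2\approx d$ and $|v|^2\approx d$ up to the fluctuations controlled by \eqref{eq:Gaussian_HW}, (ii) $x\cdot v$ is of order $\sqrt d$, and (iii) along the entire leapfrog orbit the energy error $H\circ\Phi_h^i-H$ stays within $O(1)$ so that the Multinoulli weights in \eqref{eq:multinoulli} are comparable to uniform. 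Point (iii) is the standard backward-error-analysis estimate for leapfrog on a quadratic potential and is where the step size restriction $h\lesssim d^{-1/4}\log^{-1/2}d\cdots$ in \eqref{eq:thm_hbar} enters: the per-step energy oscillation is $O(h^2|x|)=O(h^2\sqrt d)$ accumulated over $O(h^{-1})$ leapfrog steps, i.e.\ $O(h\sqrt d)$, which must be kept $O(\log^{-1}\eps^{-1})$ or so after the union bound over $2^{k_{\max}}$ iterates.

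Next I would establish the key uniformity claim: on $G$, the orbit-selection law $\mathcal O(x,v)$ concentrates on index orbits whose physical length $h(|I|-1)$ is within $\delta$ of $\pi$ — this is precisely the idealized picture in Figure~\ref{fig:u-turn}, since on $\sqrt d\,\mathcal S^{d-1}$ with tangential velocity of the right magnitude the exact flow is a great-circle rotation with period $2\pi$, and the U-turn test \eqref{eq:u-turn} triggers exactly when the arc length passes $\pi$; leapfrog and the $O(\alpha_0)$ off-shell/off-tangent corrections perturb this by $O(d^{-1}\alpha_0)$ in the relevant inner products, which is absorbed into $\delta$ in \eqref{eq:thm_h_rest}. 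The condition \eqref{eq:thm_h_rest} that $h(2^{\mathbb N}-1)$ avoids small neighborhoods of $0$ and $\pi$ is exactly what forces the doubling procedure to stop at a \emph{deterministic} number of doublings with high probability — this is the ``looping'' pathology the abstract warns about, which I would handle by showing that if a grid point $h(2^n-1)$ landed near $\pi$ the U-turn indicator would be near-critical and the number of doublings would fluctuate, destroying the uniformity. Given this, conditionally on $G$ the NUTS transition agrees with an HMC step of a fixed physical time $T_\star\approx\pi$ (more precisely, the nearest grid point), with the index $L$ then contributing an $O(1)$-variance fluctuation in which iterate along this near-half-period orbit is output.

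The third block is the coupling. For two copies $x,\tilde x$ with a common refreshed velocity (a synchronous/reflection hybrid as in \cite{BouRabeeOberdoerster2024}), on the common good event the outputs are $\Pi(\Phi_h^{L}(x,v))$ and $\Pi(\Phi_h^{\tilde L}(\tilde x,\tilde v))$ with $hL\approx h\tilde L\approx T_\star\approx\pi$; the exact half-period flow map on the sphere sends $x\mapsto -x_\perp+(\text{radial adjust})$, i.e.\ it is a strong contraction of the angular coordinate (a rotation by $\approx\pi$ composed across the two chains nearly cancels their angular discrepancy) while the radial coordinate $|x|^2-d$ contracts by a fixed factor per step because the velocity is re-randomized. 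Quantitatively I expect a one-step drift inequality of the form $\ev{W(X,\tilde X)\mid x,\tilde x}\le(1-c)\,W(x,\tilde x)+(\text{error on }G^c)$ for a suitable metric $W$ combining the angular distance on $\sqrt d\,\mathcal S^{d-1}$ and $||x|^2-d|$, with contraction rate $c$ an absolute constant; iterating $O(\log d)$ times shrinks $W$ below the scale at which the shells concentrate, and then a further $O(\log\eps^{-1})$ steps (or a direct coupling-to-$\gamma$ argument) drives the TV distance below $\eps$, giving \eqref{eq:thm_mix}. The discretization error $L\ne\tilde L$ and the $O(h\sqrt d)$ energy slack only enter additively through the probability of $G^c$, which the union bound and \eqref{eq:thm_hbar} keep below $\eps/8$ per step over the $O(\log d\log\eps^{-1})$ steps.

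The main obstacle I anticipate is the uniformity/looping step rather than the coupling: one must show that the \emph{random number of doublings} is effectively deterministic on $G$, because even a small probability of an extra or missing doubling changes $hL$ by a factor near $2$, turning a near-$\pi$ rotation into a near-$2\pi$ (i.e.\ near-identity) or near-$\pi/2$ rotation and destroying contraction. This requires a quantitative lower bound on how strictly the sub-U-turn test at the ``correct'' doubling level is satisfied and how strictly it fails at the previous level — equivalently, a uniform-in-$x\in D_{\alpha_0}$, high-probability-in-$v$ lower bound on the relevant inner products $v_\pm\cdot(x_+-x_-)$ bounded away from $0$ — and it is precisely this margin that dictates the form of $\delta$ in \eqref{eq:thm_h_rest} and the second branch of $\bar h$ in \eqref{eq:thm_hbar}. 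Controlling the leapfrog trajectory's deviation from the great circle over the full orbit (not just energy, but the geometric drift of $x\cdot v$ and $|x|^2$) to this precision, uniformly over the orbit-selection tree, is the technical heart of the argument.
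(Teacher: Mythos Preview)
Your high-level architecture --- good event, uniformity of orbit selection, coupling --- matches the paper, but the mechanics of two central steps are off in ways that would make the argument fail.

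First, the orbit length. On the good event the selected orbit does \emph{not} have physical length close to $\pi$; by Lemma~\ref{prop:u-turn} it has length $h(2^{k_\ast}-1)\in(\pi+\delta,2\pi-\delta)$, i.e.\ somewhere between $\pi$ and $2\pi$. The U-turn test first becomes negative \emph{after} $\pi$, not at $\pi$. More importantly, after the (deterministic on $G$) number of doublings $k_\ast$ is fixed, the orbit $I$ is still uniform over $\mathfrak I_0(k_\ast)$ and the index $L$ is uniform over $I$; the resulting path length $hL$ is \emph{not} concentrated near a single value $T_\star$ but follows the triangular distribution $\tau_{k_\ast}$ of \eqref{eq:tau_uniformHMC} supported on roughly $[-2\pi,2\pi]$. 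Your statement ``$hL\approx h\tilde L\approx T_\star\approx\pi$'' does not hold.

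Second --- and this is the real gap --- your contraction mechanism does not work. A near-$\pi$ rotation on $\sqrt d\,\mathcal S^{d-1}$ sends $x\mapsto -x$, which is an isometry: under synchronous coupling $|X-\widetilde X|=|\cos T|\,|x-\tilde x|=|x-\tilde x|$ at $T=\pi$. There is no ``cancellation of angular discrepancy.'' The paper's contraction comes from a completely different source: because $hL$ is genuinely random with law $\tau_{k_\ast}$, synchronous coupling yields a contraction factor $\int|\cos(\beta_h t)|\,\tau_{k_\ast}(dt)$, which is bounded away from $1$ by an absolute constant precisely because $\tau_{k_\ast}$ spreads mass over an interval of length comparable to a full period (Lemma~\ref{lem:contr}). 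The randomness of the path length is essential, not a nuisance. Likewise the one-shot TV bound (Lemma~\ref{lem:OS}) uses a velocity-shift coupling $\tilde v=v+\cot(\beta_h T)(1-h^2/4)^{1/2}(x-\tilde x)$ and again relies on $\tau_{k_\ast}$ putting mass where $|\cot|$ is bounded.

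A smaller point: your energy-error estimate has the wrong scaling. For the quadratic potential the leapfrog integrator preserves a modified Hamiltonian exactly, so the energy error is $\frac18 h^2\bigl||\Pi(\Phi_h^i(x,v))|^2-|x|^2\bigr|=O(h^2\sqrt d)$ \emph{uniformly in $i$} (see \eqref{eq:deltaH}); there is no accumulation to $O(h\sqrt d)$. With $h\sim d^{-1/4}$ your claimed $O(h\sqrt d)=O(d^{1/4})$ would blow up, whereas the correct $O(h^2\sqrt d)=O(1)$ is exactly what makes the index-selection rejection probability small.
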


\begin{remark}[Mixing Time Guarantee]
Theorem~\ref{thm:NUTSmixing} shows that NUTS can achieve an $\eps$-accurate approximation in total variation distance from an initial distribution $\nu$ with
\begin{equation}\label{eq:complexity}
O\Bigr(\max\bigr(\alpha_0^{1/2},\,d^{1/4}\log^{1/2}d\log^{3/4}\eps^{-1}\bigr)\log^{3/2}d\log\eps^{-1}\Bigr) \quad \text{gradient evaluations}
\end{equation}
for $h\propto\bar h$, where $\alpha_0$ defined by $\max\bigr(\nu(D_{\alpha_0}^c),\,\gamma(D_{\alpha_0}^c)\bigr)\leq\eps/8$ quantifies how far $\nu$ and $\gamma$ spread out away from the sphere $\sqrt d\,\mathcal S^{d-1}$. It is at least of order $\sqrt d\log^{1/2}\eps^{-1}$ by \eqref{eq:Gaussian_HW}.  If the initial distribution is a point mass in $ D_{\alpha_0}$ with $\alpha_0=O(\sqrt d\log d\log^{3/2}\eps^{-1})$, \eqref{eq:complexity} simplifies to
\begin{equation}\label{eq:complexity_2}
O\bigr(d^{1/4}\log^2d\log^{7/4}\eps^{-1}\bigr)  \quad \text{gradient evaluations} \;.
\end{equation}
The same holds for any discrete or continuous convex combination of such point masses.  The $d^{1/4}$-scaling is expected to be sharp \cite{BePiRoSaSt2013}.

Comparable mixing time guarantees have been obtained with the conductance approach \cite{LS1993} for well-tuned HMC: in the Gaussian case, $O(d^{1/4}\log^{3/2}(\beta\eps^{-1}))$ gradient evaluations \cite{apers2022}, and in more general settings, assuming an isoperimetric inequality and sufficient regularity, $O((d+\log(\beta\eps^{-1}))^{1/4}\log^{7/4}(\beta\eps^{-1}))$ gradient evaluations \cite{chen2023}.  Here, $\beta=\sup_{B}\nu(B)/\mu(B)$ is the warmness parameter, with the supremum taken over all Borel sets. These conductance-based results are more general than stated and also capture dependencies on the condition number.

A key difference between Theorem \ref{thm:NUTSmixing} and these results, beyond the focus on NUTS instead of HMC, is the dependence on the warmness parameter.  The two mixing time guarantees via conductance only align with the $d^{1/4}$-scaling of \eqref{eq:complexity_2} if $\beta$ scales polynomially in $d$.
In contrast, the mixing time bound from Theorem \ref{thm:NUTSmixing} is independent of $\beta$. In the context of \eqref{eq:complexity_2}, $\beta=\infty$. 

While the conductance method naturally uses isoperimetry to capture the geometric structure of the target distribution, this work demonstrates that a coupling approach, when extended to account for geometric structure, can produce comparable results with less restrictions on the initial distribution. That said, the conductance method has been applied to a much wider range of settings than the Gaussian case considered here.  Nonetheless, this study also paves the way for a conductance-based analysis of NUTS \cite{LS1993, Lovasz99, Chen_Minimax}. 
\end{remark}

\subsection{Discussion of Theorem \ref{thm:NUTSmixing}}

The proof of Theorem \ref{thm:NUTSmixing} is based on the coupling lemma, which states
\begin{equation}
\label{eq:coupling_lemma}
\TV\bigr(\nu\pi_{\NUTS}^n,\,\mu\bigr)\ \leq\ \mathbb P(X_n\ne\widetilde X_n) 
\end{equation}
where $X_n\sim\nu\pi_{\NUTS}^n$ is a NUTS chain initialized in $\nu$ and $\widetilde X_n\sim\mu\pi_{\NUTS}^n=\mu$ is a stationary copy.  These chains are coupled such that the probability of exact meeting increases over the iterations, causing the right hand side to approach zero as $n$ increases.  

In the case of the Gaussian measure $\gamma$, the primary factor driving this meeting is the log-concavity of $\gamma$, which brings the two copies closer when they are synchronously coupled.  Once the copies are sufficiently close, a one-shot coupling can then make them meet exactly \cite{BouRabeeEberle2023}.  However, NUTS’ path length adaptivity complicates this process and its analysis.  

To address this, we establish strong uniformity in NUTS’ transitions within the geometry of  $\gamma$.   This allows us to demonstrate contraction and to implement a one-shot coupling successfully. The uniformity in NUTS’ transitions is achieved by carefully controlling local effects on both orbit and index selection, which is a central theme of this work.  As we discuss further below, Conditions~\eqref{eq:thm_hbar} and \eqref{eq:thm_h_rest} play crucial roles in these arguments.

For the sake of legibility, in this discussion we suppress logarithmic dependencies on $d$ and $\eps^{-1}$ by using the notation: for $\mathsf{x},\mathsf{y}\in\mathbb R$ we write $\mathsf{x}=\widetilde O (\mathsf{y})$ if there exists a constant $C>0$ depending at most logarithmically on $d$ and $\eps^{-1}$ such that $\mathsf{x}\leq C \, \mathsf{y}$.

\paragraph{Localization to a shell.}
We aim to conduct the analysis within a spherical shell $D_\alpha$ with $\alpha=\widetilde O(d^{1/2})$ where the target measure $\gamma$ concentrates as shown by \eqref{eq:Gaussian_HW}.   Setting $\alpha_0=\widetilde O(d^{1/2})$ ensures that both copies of NUTS, appearing in \eqref{eq:coupling_lemma}, start in a comparable shell with sufficient probability.  The stability of NUTS when initialized in this shell then allow us to restrict the analysis to the shell $D_\alpha$.  This approach incorporates the geometric structure of the target measure into the analysis. 

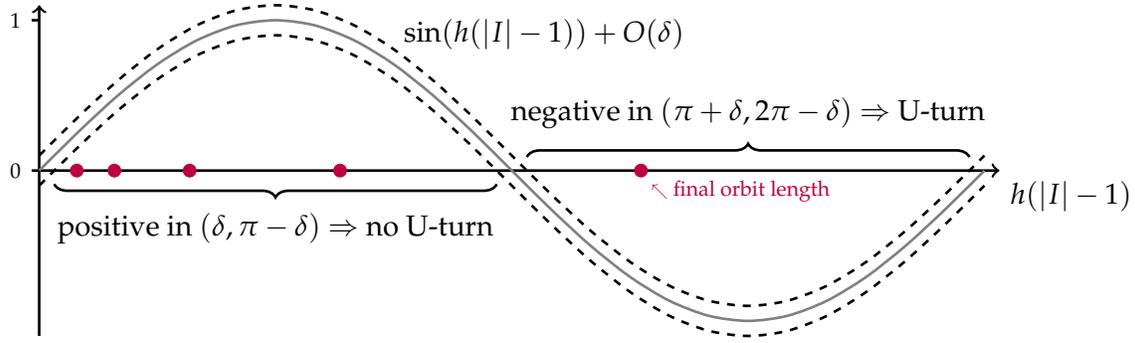
\begin{figure}[t]
\centering
\begin{tikzpicture}[scale=2]
\clip (-0.2,-1.2) rectangle ({2*pi+1},1.2);

\draw[black, line width=1pt, ->] (0,0) -- ({2*pi+0.1},0) node[anchor=north west, pos=1]{$h(|I|-1)$};
\draw[black, line width=1pt, ->] (0,-1.1) -- (0,1.1);
\draw[black, line width=1pt] (-0.05,0) -- (0,0) node[anchor=east, pos=0]{${\scriptstyle 0}$};
\draw[black, line width=1pt] (-0.05,1) -- (0,1) node[anchor=east, pos=0]{${\scriptstyle 1}$};

\draw [gray, line width=1pt] plot[variable=\t,domain=0:2*pi,smooth,thick] ({\t},{sin(\t r)});
\draw [black, line width=1pt, dashed] plot[variable=\t,domain=0:2*pi,smooth,thick] ({\t},{sin(\t r)+0.1});
\draw [black, line width=1pt, dashed] plot[variable=\t,domain=0:2*pi,smooth,thick] ({\t},{sin(\t r)-0.1});

\node (name) at ({3*pi/4},{sin(3*pi/4 r)+0.2}) [anchor=west]{$\sin(h(|I|-1))+O(\delta)$};

\draw [decorate,decoration={brace,amplitude=5pt,mirror,raise=1ex}, line width=1pt]
  ({0.1},0) -- ({pi-0.1},0) node[midway,yshift=-2em]{positive in $(\delta,\pi-\delta)$ $\Rightarrow$ no U-turn};
\draw [decorate,decoration={brace,amplitude=5pt,raise=1ex}, line width=1pt]
  ({pi+0.1},0) -- ({2*pi-0.1},0) node[midway,yshift=2em]{negative in $(\pi+\delta,2\pi-\delta)$ $\Rightarrow$ U-turn};

\filldraw[purple] (0.25,0) circle (1.2pt);
\filldraw[purple] (0.5,0) circle (1.2pt);
\filldraw[purple] (1,0) circle (1.2pt);
\filldraw[purple] (2,0) circle (1.2pt);
\filldraw[purple] (4,0) circle (1.2pt) node[anchor=north west]{\scriptsize ${\scriptstyle \nwarrow}$ final orbit length};

\end{tikzpicture}
\caption{\textit{This figure illustrates \eqref{eq:u-turn_disc}, which shows that the dot products in the U-turn property \eqref{eq:u-turn} are within $O(\delta)$ of a sine function (gray).  The deviations are in particular due to local effects.  For orbit lengths (in physical time) within the highlighted intervals, the deviations do not affect the signs of the dot products, implying the U-turn property is independent of local effects and thus uniform in position $x$.  This implies that the orbit selection consistently finds orbits of uniform length, as described in \eqref{eq:chosenOL_disc}.}}
\label{fig:sin}
\end{figure}

\paragraph{Role of condition \eqref{eq:thm_h_rest}.}
Condition \eqref{eq:thm_h_rest} is crucial for efficient mixing of NUTS.  Without it, the orbit selection procedure may continue iterating to the maximal orbit length.\footnote{In practice, a cap of $2^{10}=1024$ iterations is imposed in NUTS, hence $k_{\mathrm{max}}=10$ in Figures \ref{fig:orbitfig} and \ref{fig:indexfig}.}  This behavior can result in a significant mixing bottleneck, as shown in Figures \ref{fig:orbitfig} (b) and \ref{fig:indexfig} (a).  However, when the step size $h$ satisfies condition~\eqref{eq:thm_h_rest}, something remarkable occurs: the orbit selection step in Algorithm~\ref{algo:NUTS} becomes state-independent within the shell $D_\alpha$ with high probability.  A potential solution to this looping issue is considered in Figure~\ref{fig:fixfig}.

Specifically, in Lemma \ref{lem:u-turn}, we prove that given an initial position $x\in D_\alpha$, an initial velocity $v\sim\gamma$, and any index orbit $I$, the behavior of the U-turn property of $I$ closely resembles the idealized setting shown in Figure~\ref{fig:u-turn}, in the sense that
\begin{equation}\label{eq:u-turn_disc}
    d^{-1}\min\bigr(v_+\cdot(x_+-x_-),\,v_-\cdot(x_+-x_-)\bigr)\ =\ \sin(h(|I|-1))\ +\ O(\delta)\;,
\end{equation}
where $h(|I|-1)$ is the orbit length in physical time and $\delta=\widetilde O(d^{-1/2})$ accounts for small deviations from the sine function, as shown in Figure \ref{fig:sin}.  

As a consequence of \eqref{eq:u-turn_disc}, the U-turn condition in \eqref{eq:u-turn} becomes state-independent within the shell $D_\alpha$ with high probability.  Specifically, with high probability:
\begin{align*}
    h(|I|-1)\ \in\ \bigr[\delta,\,\pi-\delta\bigr]\quad&\Rightarrow\quad\text{The index orbit $I$ does not have the U-turn property.} \\
    h(|I|-1)\ \in\ \bigr(\pi+\delta,\,2\pi-\delta\bigr)\quad&\Rightarrow\quad\text{The index orbit $I$ has the U-turn property.}
\end{align*}
However, outside these intervals, state-dependent local effects emerge that can lead the orbit selection procedure to iterate to the maximum orbit length, as shown in Figures \ref{fig:orbitfig} (b) and \ref{fig:indexfig} (a). 

This looping phenomenon motivates condition \eqref{eq:thm_h_rest}, which ensures that the orbit selection procedure iterates through orbit lengths that fall within the specified intervals, as shown by the red dots in Figure \ref{fig:sin}.  As illustrated in Figures \ref{fig:orbitfig} (a) and (c), when condition \eqref{eq:thm_h_rest} is met, the orbit selection procedure stops at the unique orbit length $2^{k_\ast}$ satisfying:
\begin{equation}\label{eq:chosenOL_disc}
    h(2^{k_\ast}-1)\ \in\ (\pi+\delta,2\pi-\delta)\;.
\end{equation}
Here we realistically assume $h(2^{k_{\mathrm{max}}}-1)>2\pi$.  

For index orbit-length $2^{k_\ast}$, due to the equal probability of doubling either forward or backward in time, the orbit selection procedure generates an index orbit $I$ that is uniformly sampled from the set of index orbits of length $2^{k_\ast}$ containing the starting index $0$.  Specifically, Lemma \ref{prop:u-turn} states that, with high probability, the following holds uniformly in $x\in D_\alpha$:  
\begin{equation}\label{eq:orbitdist_disc}
    \mathcal O(x,v)\ =\ \Unif(\mathfrak I_0(k_\ast)) \;.
\end{equation}

\begin{figure}[t]
\centering
\noindent\begin{minipage}[b]{0.33\textwidth}
	\includegraphics[width=\textwidth]{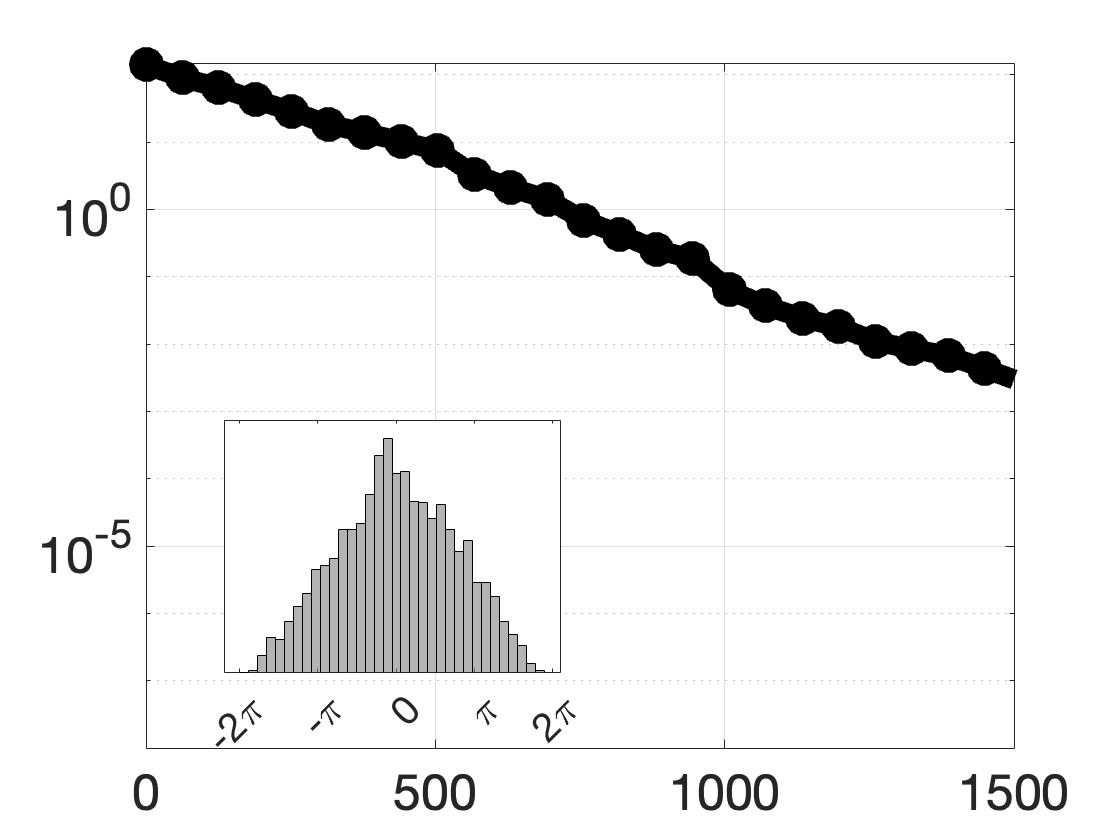}
	\caption*{(a) $h=0.09$}
\end{minipage}%
\noindent\begin{minipage}[b]{0.33\textwidth}
	\includegraphics[width=\textwidth]{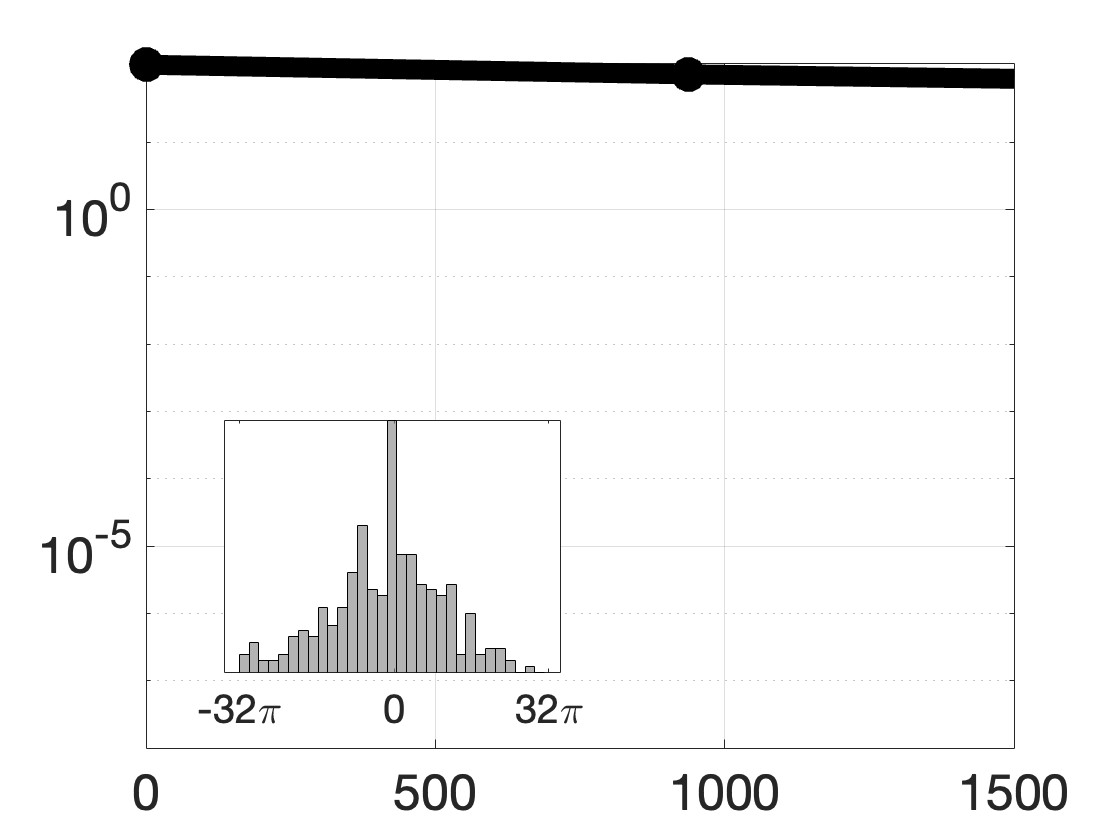}
	\caption*{(b) $h=0.1$}
\end{minipage}%
\noindent\begin{minipage}[b]{0.33\textwidth}
	\includegraphics[width=\textwidth]{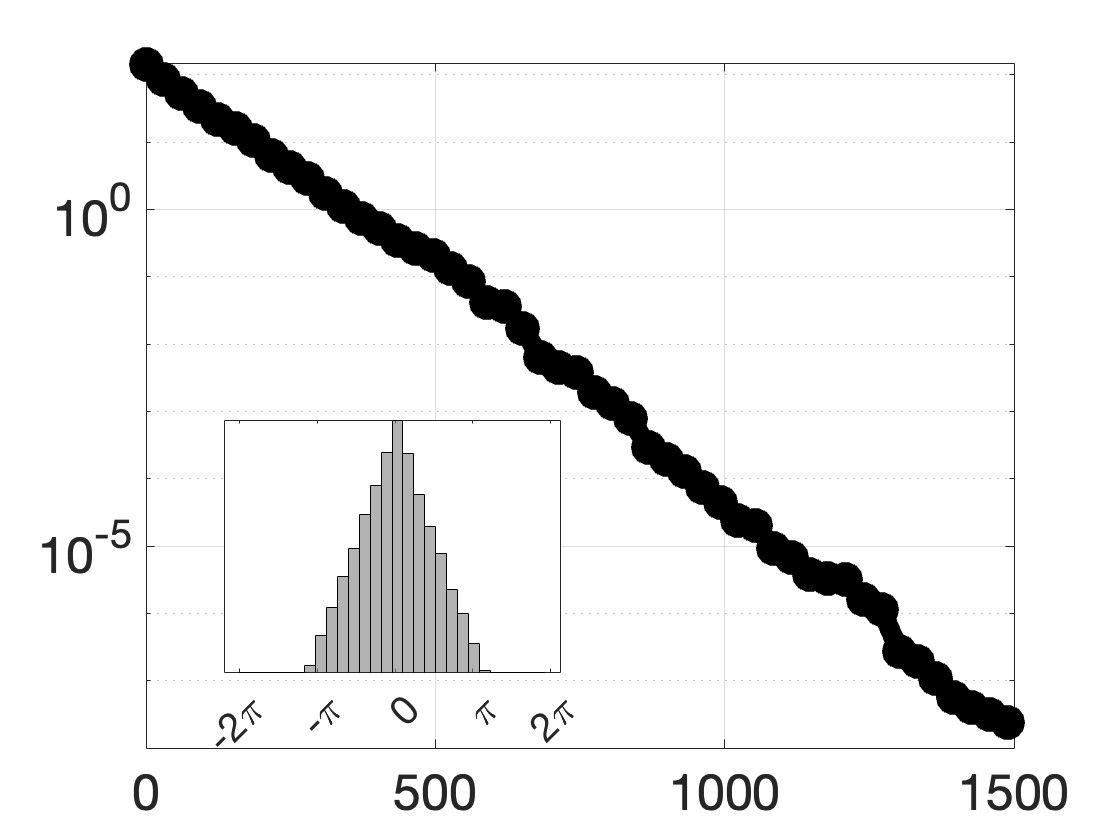}
	\caption*{(c) $h=0.11$}
\end{minipage}%
\caption{\textit{Mean distance between two synchronously coupled copies of NUTS ($k_{\mathrm{max}}=10$) across $100$ realizations in dimension $d=10^4$, plotted against the mean number of leapfrog steps.  All copies are initialized with independent draws from $\gamma$ ($\alpha_0=\widetilde O(d^{1/2})$) and run for $50$ transitions.  All three choices of $h$ satisfy \eqref{eq:thm_hbar} ($\bar h=\widetilde O(d^{-1/4})\approx0.1$). However, in case (b),  condition  \eqref{eq:thm_h_rest} is not met ($0.1(2^5-1)=3.1$ is within a $\delta=\widetilde O(d^{-1/2})\approx0.01$ neighborhood of $\pi$).  Due to local effects on the U-turn property (see Figure \ref{fig:sin}) most transitions in (b) fail to detect any U-turns during orbit selection, leading  to the selection of the maximal orbit length $h(2^{k_{\mathrm{max}}}-1)\approx32\pi$.  In contrast, cases (a) and (c) satisfy condition \eqref{eq:thm_h_rest}, ensuring mixing behavior consistent with Theorem \ref{thm:NUTSmixing}.  In (a) and (b), the final orbit lengths are 5.67 and 3.41, respectively, in physical time consistent with \eqref{eq:chosenOL_disc} (see Figure \ref{fig:sin}).  The insets show histograms of the locally adapted path lengths of NUTS.  We observe approximately triangular distributions in (a) and (c), as expected by \eqref{eq:tau_disc}.  The faster convergence  in (c) compared to (a) is due to the computationally cheaper shorter orbits.}}
\label{fig:orbitfig}
\end{figure}

\paragraph{Role of condition \eqref{eq:thm_hbar}.} Condition~\ref{eq:thm_hbar} is crucial for mixing of NUTS.  Without it, leapfrog energy errors can accumulate, causing the index selection in Algorithm~\ref{algo:NUTS} to break down.  This leads to significant mixing bottlenecks, even if the leapfrog step size meets the stability requirement $h<2$, as shown in Figures~\ref{fig:indexfig} (c) and (d).  

Specifically, for $x\in D_\alpha$ (so that $||x|^2-d|\leq \alpha$ by \eqref{eq:D_alpha}) and $v \sim \gamma$,  the leapfrog energy errors satisfy \begin{equation}\label{eq:deltaH}
    \bigr|H\circ\Phi_h^i-H\bigr|(x,v)
\ =\ \frac{1}{8}h^2\bigr||\Pi(\Phi_h^i(x,v))|^2-|x|^2\bigr|\ =\ \widetilde O(h^2d^{1/2}) \;, \quad i \in \mathbb{Z} \;.
\end{equation}
This results from the leapfrog integrator preserving the modified Hamiltonian $H_h(x,v)=H(x,v)-h^2|x|^2/8$, i.e., $H_h \circ \Phi_h \equiv H_h$, and the entire leapfrog orbit remaining in a comparable shell as $x$ with high probability, as shown in Lemma \ref{lem:stab_leapfrog}.  When condition \eqref{eq:thm_hbar} is met,   $h=\widetilde O(d^{-1/4})$, which controls the energy errors uniformly in $x$.  Given an index orbit $I$, this implies that the index selection procedure samples uniformly from $I$, i.e., $L\sim\Unif(I)$ with fixed probability, see Lemma \ref{lem:MultHMCtoUnifHMC}.

\begin{figure}[t]
\noindent\begin{minipage}[b]{0.25\textwidth}
\includegraphics[width=\textwidth]{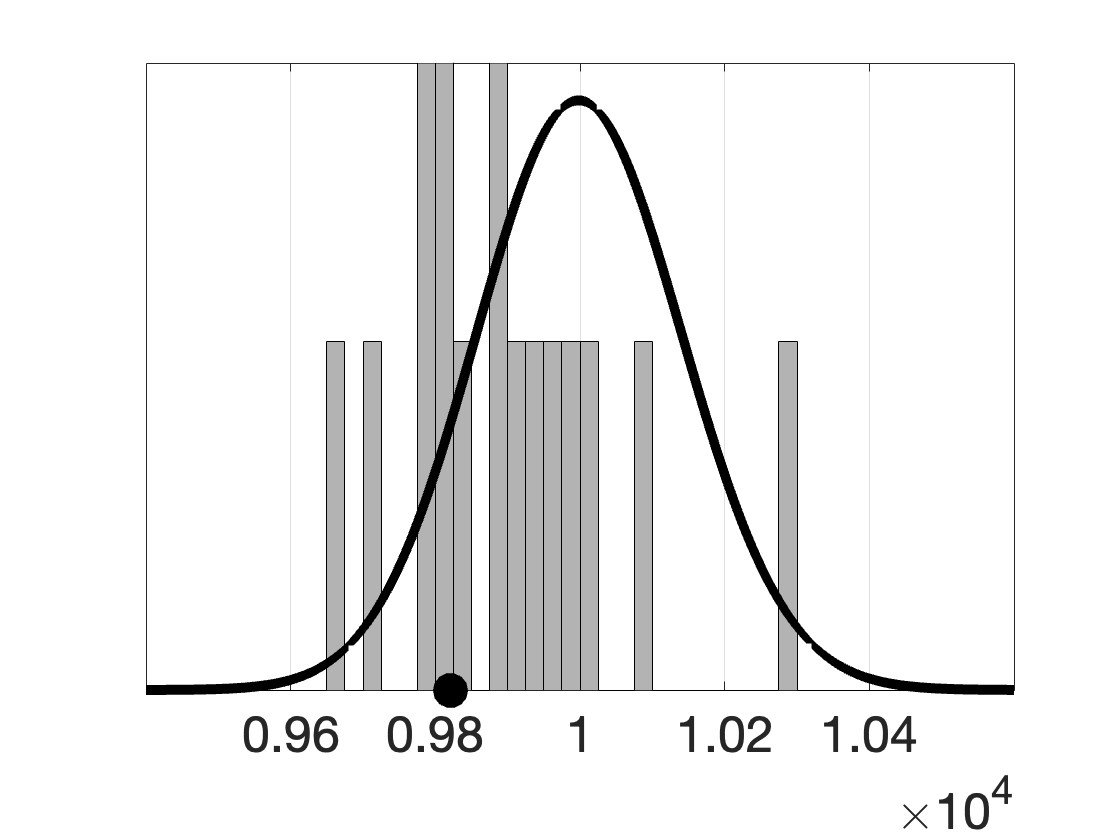}
\caption*{(a) $h=0.1$}
\end{minipage}%
\noindent\begin{minipage}[b]{0.25\textwidth}
\includegraphics[width=\textwidth]{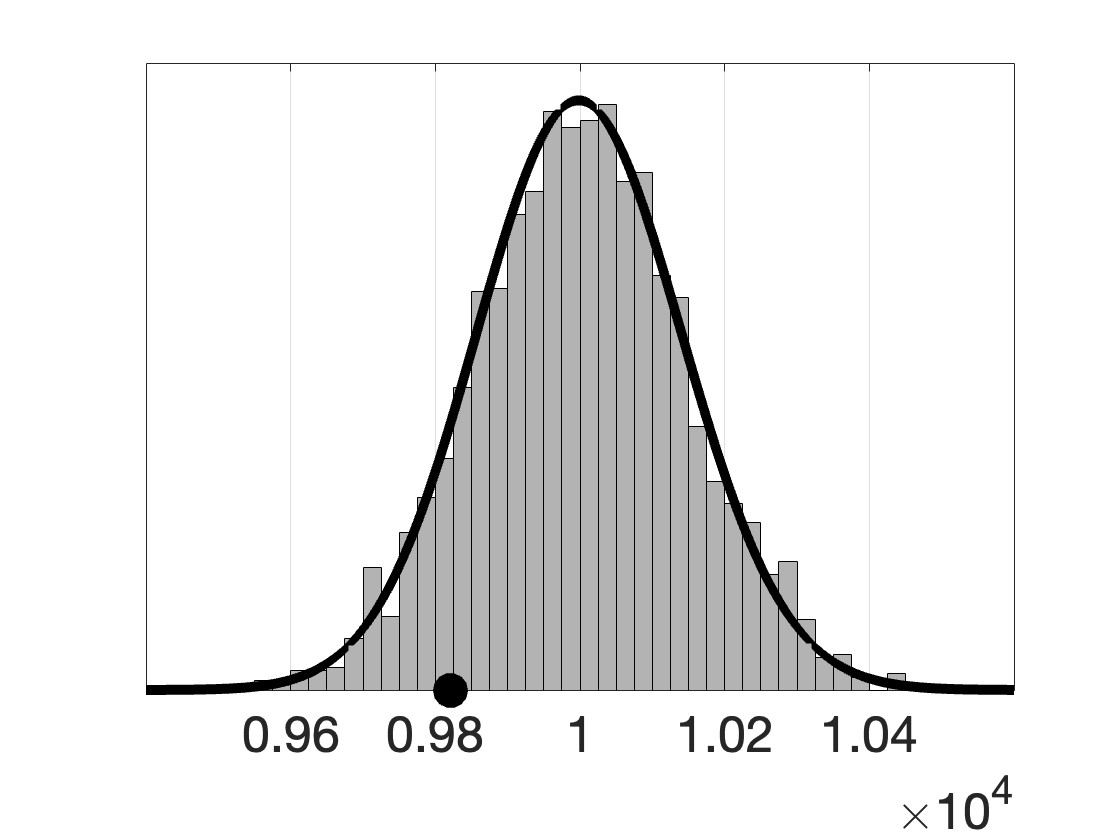}
\caption*{(b) $h=0.11$}
\end{minipage}%
\noindent\begin{minipage}[b]{0.25\textwidth}
\includegraphics[width=\textwidth]{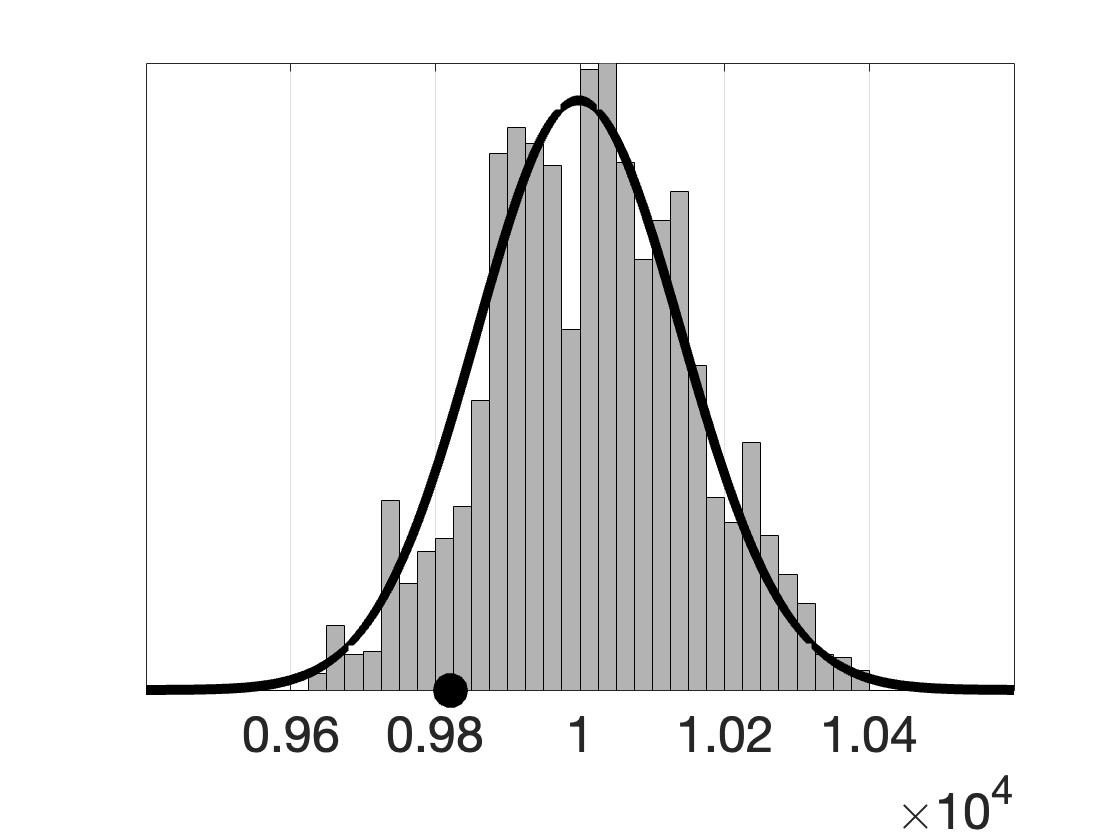}
\caption*{(c) $h=0.5$}
\end{minipage}%
\noindent\begin{minipage}[b]{0.25\textwidth}
\includegraphics[width=\textwidth]{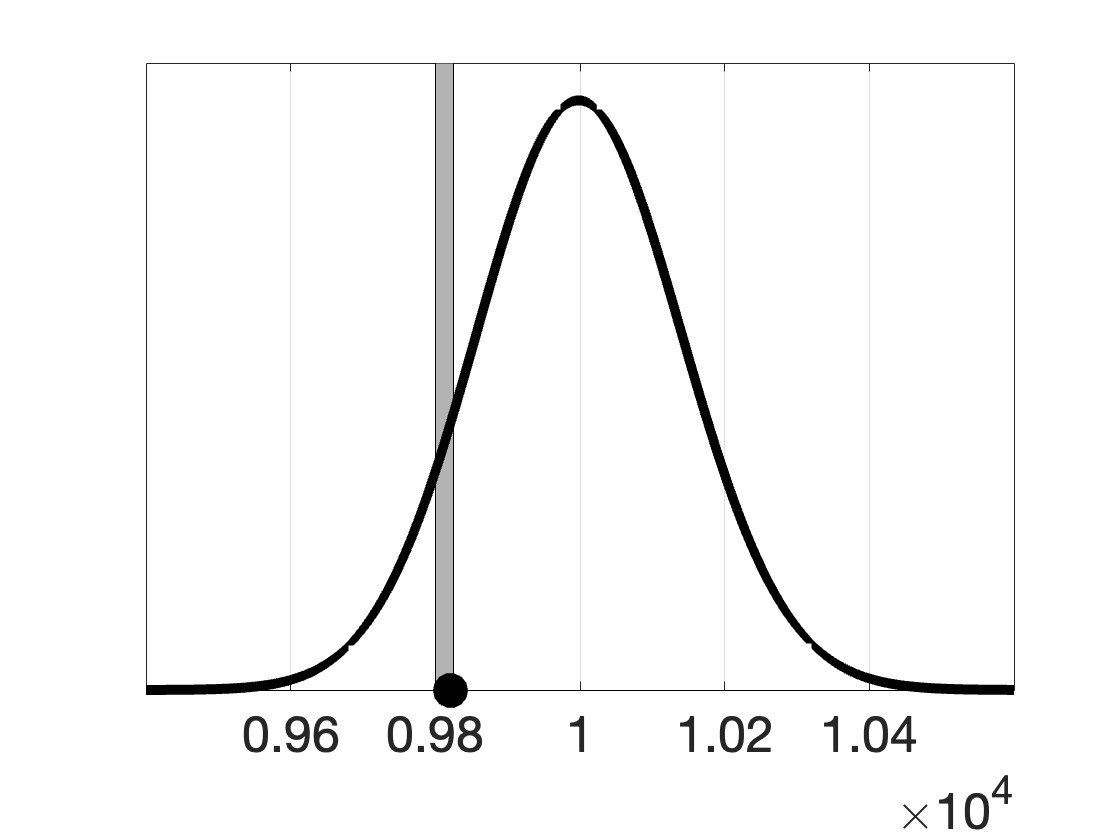}
\caption*{(d) $h=1$}
\end{minipage}%
\caption{\textit{Histograms of $|x|^2$ of NUTS ($k_{\mathrm{max}}=10$) in dimension $d=10^4$ started from a fixed $x_0\sim\gamma$ (marked by a black dot on the x-axis): (a)  over $2$ iterations of $12$ realizations, (b)-(d) over the final $25$ of (b) 50, (c) 660, (d) 2000 iterations of $100$ realizations.  The computational cost of each plot is approximately equal.  The black curves show the exact density of $|x|^2\sim\chi^2(d)$ for $x\sim\gamma$.  (a) shows no mixing within the computational budget due to defective orbit selection, as illustrated in Figure \ref{fig:orbitfig} (b), where condition  \eqref{eq:thm_h_rest} is not met.  (b) shows fast mixing consistent with Theorem \ref{thm:NUTSmixing} and Figure \ref{fig:orbitfig} (c).  (c) and (d) show slow and no mixing, respectively, caused by poor conductance due to large leapfrog energy errors,  where condition \eqref{eq:thm_hbar} is not met.}}
\label{fig:indexfig}
\end{figure}

\paragraph{Comparison to Uniform HMC.}  If the probability of $L\sim\Unif(I)$ is sufficiently large, then combined with \eqref{eq:orbitdist_disc}, the path length distribution of NUTS, as defined in \eqref{eq:HMCkernel}, becomes approximately triangular within the shell $D_{\alpha}$.  Specifically, we have 
\begin{equation}\label{eq:tau_disc}
    \tau_{x,v}(\{t\})\ \approx\ \frac{2^{k_\ast}-|t/h|}{(2^{k_\ast})^2}\quad\text{for $t\in h\,\mathbb Z$ such that $|t|\leq h(2^{k_\ast}-1)$.}
\end{equation}
The insets in Figures \ref{fig:orbitfig} (a) and (c) illustrate this triangular distribution.  Equation \eqref{eq:HMCkernel}, with path length distribution given by the right hand side of \eqref{eq:tau_disc}, defines a transition kernel referred to as \emph{Uniform HMC} named for its uniform orbit and index selection.  This uniformity makes Uniform HMC more tractable for mixing analysis compared to NUTS (see Lemmas \ref{lem:contr} and \ref{lem:OS}).  By comparison, it is then possible to infer mixing of NUTS.  

This comparison motivates the concept of \emph{accept/reject Markov chains} which combine two Markov chains through an accept/reject mechanism.  NUTS fits this framework, following Uniform HMC upon acceptance.  We focus on the \emph{high acceptance regime}, where the accept/reject chain (NUTS) accepts with sufficiently high probability, allowing it to inherit mixing properties of the underlying accept dynamics (Uniform HMC).  This is formalized in Theorem \ref{thm:ARmix} extending a recently introduced coupling framework for the mixing analysis of Metropolized Markov chains \cite{BouRabeeOberdoerster2024}.  

Since both the probability of acceptance and the mixing of the accept chain are generally local properties, the framework further restricts to a subset of the state space by controlling the exit probabilities from this region.  For NUTS, this localization allows us to focus on the shell $D_\alpha$, incorporating the geometry of $\gamma$ into the coupling approach.

\subsection{Open Problems}

Given the limited theoretical work on NUTS, despite its widespread use and importance in modern statistical computing, it seems both timely and necessary to outline directions for future study.

\paragraph{Looping phenomenon.}
NUTS relies on the U-turn property \eqref{eq:u-turn}, a key feature designed to prevent the sampler from inefficiently looping back to its starting point. However, as shown in Figures \ref{fig:orbitfig} (b) and \ref{fig:indexfig} (a), for certain step sizes, NUTS' orbit selection procedure fails to detect U-turns, causing the algorithm to select the maximal orbit length. In practice, this can result in up to $2^{10}$ leapfrog steps per transition, significantly impairing NUTS' efficiency.  A potential solution, as illustrated in Figure \ref{fig:fixfig}, is to use an irregularly spaced time grid instead of an evenly spaced one.

\smallskip

\begin{problem}
Can using a randomly spaced time grid in NUTS' orbit selection procedure relax Condition~\eqref{eq:thm_h_rest}? 
\end{problem}

\paragraph{Mixing time \& asymptotic bias of Uniform HMC.}  Recently, significant progress has been made in quantifying the mixing time of unadjusted samplers \cite{DurmusEberle2021}, such as the Unadjusted Langevin Monte Carlo \cite{durmus2017nonasymptotic,dalalyan2017theoretical,durmus2019high,erdogdu2021convergence}, unadjusted HMC \cite{BouRabeeSchuh2023,BouRabeeEberle2023,monmarche2022hmc}, and unadjusted Kinetic Langevin Monte Carlo \cite{cheng2018underdamped,dalalyan2020sampling,monmarche2021high,monmarche2022hmc}.  These studies provide explicit upper bounds on mixing time and complexity. Although related, Uniform HMC differs from these unadjusted samplers, and there has been little exploration of its complexity or asymptotic bias properties.

\smallskip

\begin{problem}
What are the total variation mixing time and asymptotic bias properties of Uniform HMC for asymptotically strongly log-concave targets and mean-field models? 
\end{problem}

\paragraph{Biased progressive sampling \& the apogee-to-apogee path sampler.}

There are two common methods for index selection in NUTS: (i) Multinoulli sampling, as in \eqref{eq:multinoulli}, and (ii) biased progressive sampling, introduced in \cite{betancourt2017conceptual}. The latter method assigns greater weight to the last doubling in the index selection process, effectively favoring states that are further from the starting point, assuming no looping back. A more systematic approach to this type of weighting is introduced with the apogee-to-apogee path sampler (AAPS) \cite{SherlockUrbasLudkin2023Apogee}, a recently developed locally adaptive HMC method. Like NUTS, AAPS is an auxiliary variable method, and its reversibility is established in \cite[Cor 6]{BouRabeeCarpenterMarsden2024}.

\smallskip

\begin{problem}
What effect does biased progressive sampling have on the mixing time of NUTS?    
\end{problem}

\smallskip

\begin{problem}
How does the mixing time of AAPS depend on its weighting scheme?
\end{problem}

\paragraph{Non-reversible lifts and NUTS.} In a landmark paper,  Diaconis et al~(2000) lifted the reversible symmetric random walk on the discrete circle $\mathbb Z/(n\mathbb Z)$ to $\mathbb Z/(n\mathbb Z)\times\{-1,1\}$ by adding a velocity variable, which flips with probability $\epsilon$ at each step.  On time scales of $\epsilon^{-1}$ steps, the lifted walk moves ballistically, i.e., it explores the circle in a directed manner.  When the time of this ballistic motion becomes comparable to the circumference $n$, the lifted walk mixes in $ O(n)$ steps, while the original symmetric random walk requires at least a diffusive $O(n^2)$ steps \cite{DiHoNe2000}.

Lifting reversible Markov chains and processes to non-reversible counterparts can result in faster convergence to equilibrium leveraging this diffusive-to-ballistic speed-up \cite{chen1999lifting,EberleLoerler24}.
On the one hand, NUTS resembles an unbiased, self-tuning discretization of the Randomized HMC process \cite{BoSa2017,deligiannidis2018randomized,kleppe2022connecting,BoEb2022}, which is a non-reversible lift of the overdamped Langevin diffusion in the sense of \cite{EberleLoerler24} capable of achieving the ballistic speed-up.
On the other hand, as Theorem~\ref{thm:rev} indicates, NUTS is a reversible Markov chain.
This dichotomy between NUTS' resemblance to non-reversible processes and its inherent reversibility naturally leads to the following questions.

\smallskip

\begin{problem}
Can NUTS achieve a diffusive-to-ballistic speed-up, and if so, under what conditions?
\end{problem}

\smallskip

\begin{problem}
If NUTS does not always achieve this speed-up, is there a non-reversible lift of NUTS that can?
\end{problem}

\paragraph{Effect of anisotropy on U-turn property.} As previously discussed, even in the case of an isotropic Gaussian measure, NUTS’ orbit selection procedure is highly sensitive to the choice of step size.  Through our analysis of the U-turn property in \eqref{eq:u-turn} within the shell $D_{\alpha}$, we precisely identified the range of step sizes that can lead to looping behavior (see Figure~\ref{fig:sin}).   However, the behavior of the U-turn property in anisotropic distributions, remains largely unexplored.   Understanding this could provide important insights into how NUTS performs with more complex target distributions.

\medskip

\begin{problem}
How does the U-turn property behave in the context of anisotropic Gaussian distributions, or more generally, for anisotropic log-concave probability measures?      
\end{problem}

\paragraph{Mixing of Riemannian NUTS.} Similar to Riemannian HMC \cite{GiCa2011}, the NUTS algorithm can be extended to Riemannian manifolds by introducing a geometric U-turn property on the tangent bundle \cite{betancourt2013generalizing}.  In this extension, the Riemannian metric adapts to the geometry of the target distribution, acting as a preconditioner for the leapfrog flow on the manifold’s tangent bundle.  By adapting to the local geometry, the preconditioner can improve the conditioning of the problem. This approach is beneficial in  problems with varying curvature: high-curvature regions often require  small leapfrog step sizes for stability, while low-curvature regions benefit from larger path lengths and step sizes. Neal’s funnel problem, with its high curvature in the narrow neck  and low curvature in the wider mouth, is a classic example  \cite{betancourt2013hamiltonianmontecarlohierarchical}.  This leads to the following question.

\smallskip
 
\begin{problem}
What are the mixing properties of Riemannian NUTS in settings with variable curvature, such as Neal’s funnel problem?
\end{problem}

\begin{figure}[t]
\centering
\includegraphics[width=\textwidth]{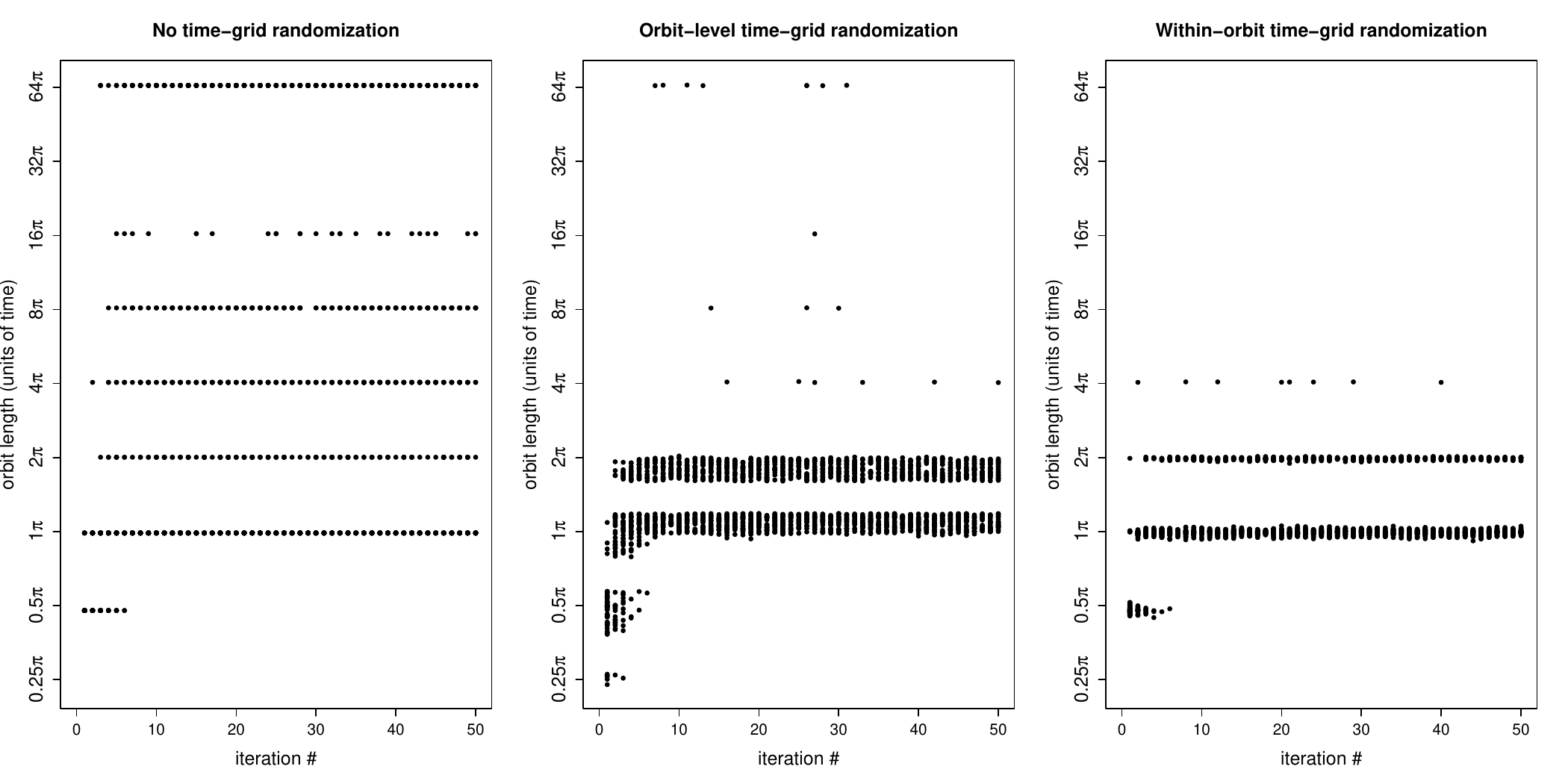}
\caption{\textit{
This figure illustrates a potential solution to NUTS' looping issue in dimension $d=10^4$ using $50$ independent realizations each started from stationarity.  Orbit lengths are plotted against NUTS' iteration number.  (a) uses a fixed leapfrog step size of $h=0.1$ where condition \eqref{eq:thm_h_rest} is not met, as in Figure~\ref{fig:orbitfig} (b). (b) applies step-size randomization at each NUTS transition step.  (c) applies step-size randomization at each leapfrog integration step. (Figure courtesy of and used with permission from Tore Selland Kleppe.)}
}
\label{fig:fixfig}
\end{figure}

\subsection{Paper Outline}

The rest of the paper is organized as follows: In Section \ref{sec:ARchains}, we introduce the notion of accept/reject chains and develop a flexible coupling framework for analyzing their mixing properties.
These tools are later used to compare NUTS with Uniform HMC.  Section \ref{sec:NUTSandGaussianGeometry} establishes the stability required for localization in the shell $D_{\alpha}$, and formalizes the uniformity of NUTS' orbit and index selection steps within this shell, setting the stage for the comparison to Uniform HMC.  In Section \ref{sec:UnifHMC}, we prove mixing time upper bounds for Uniform HMC using coupling techniques.  Finally, in  Section \ref{sec:NUTSproof}, we present the proof of our main result (Theorem \ref{thm:NUTSmixing}).

\subsection*{Open-source code and reproducibility}

Figures~\ref{fig:orbitfig} and \ref{fig:indexfig} can be reproduced using our MATLAB code, which implements NUTS applied to the canonical Gaussian measure on $\mathbb{R}^d$, along with a synchronous coupling of two NUTS chains. This code is available on GitHub under the MIT license.\footnote{Available at \url{https://github.com/oberdoerster/Mixing-of-NUTS}.}

\subsection*{Acknowledgements}

We thank Bob Carpenter, Andreas Eberle, Andrew Gelman, Tore Kleppe, and Francis L\"orler for useful discussions.  N.~Bou-Rabee was partially supported by NSF grant  No. DMS-2111224.  S.~Oberd\"orster was supported by the Deutsche Forschungsgemeinschaft (DFG, German Research Foundation) under Germany’s Excellence Strategy – EXC-2047/1 – 390685813.

\section{Mixing of accept/reject Markov chains}\label{sec:ARchains}

Accept/reject Markov chains combine two Markov kernels through an accept/reject mechanism.
Let $(\Omega,\mathcal A,\mathbb P)$ be a probability space, $S$ a Polish state space with metric $\mathsf{d}$ and Borel $\sigma$-algebra $\mathcal B$, and $\mathcal P(S)$ the set of probability measures on $(S,\mathcal B)$.
For  any $x\in S$, the transition steps $X^{\AR}\sim\pi_{\AR}(x,\cdot)$ of an accept/reject Markov chain take the general form 
\begin{equation} \label{eq:X}
X^{\AR}(\omega)\ =\ \Phi^{\A}(\omega,x)\,\ind_{A(x)}(\omega)\ +\ \Phi^{\R}(\omega,x)\,\ind_{A(x)^c}(\omega)\;, \end{equation}
where $\omega\in\Omega$, $\Phi^{\A},\Phi^{\R}:\Omega\times S\to S$ are product measurable and such that $\Phi^{\A}(\cdot,x)\sim\pi_{\A}(x,\cdot)$ and $\Phi^{\R}(\cdot,x)\sim\pi_{\R}(x,\cdot)$, and the function $A:S\to\mathcal A$ is measurable.  $A(x)$ represents the event of acceptance in the accept/reject mechanism.  In the event of acceptance, the accept/reject chain follows the accept kernel $\pi_{\A}$;  otherwise, it follows the reject kernel $\pi_{\R}$.  For convenience, we write
\[ \pi_{\AR}(x,\cdot)\,\ind_{A(x)}\ =\ \pi_{\A}(x,\cdot)\,\ind_{A(x)}\;. \] 
Between two probability measures $\nu,\eta \in \mathcal{P}(S)$,
the total variation distance and the $L^1$-Wasserstein distance with respect to $\mathsf{d}$ are defined as
\begin{equation} \label{eq:tv_coupling}
\TV\bigr(\nu, \eta \bigr) \ = \ \inf\,\mathbb P[X\neq Y]\quad\text{and}\quad\mathcal W^1_{\mathsf{d}}(\nu,\eta) \ = \ \inf\,\mathbb E\,\mathsf{d}(X,Y)
\end{equation}  
with infima taken over all $\law(X, Y) \in \J(\nu,\eta)$.

\medskip

\begin{theorem}\label{thm:ARmix}
    Let $\eps>0$ be the desired accuracy, $\nu\in\mathcal P(S)$ the initial distribution, $D\subseteq S$, and $\mu\in\mathcal P(S)$ the invariant measure of the accept/reject kernel $\pi_{\AR}$.

    Regarding the accept kernel $\pi_{\A}$, we assume:
    \begin{itemize}
    \item[(i)]
    There exists $\rho>0$ such that for all $x,\tilde{x}\in D$
    \[ \mathcal W^1_{\mathsf{d}}(\pi_{\A}(x,\cdot),\pi_{\A}(\tilde x,\cdot))\ \leq\ (1-\rho)\,\mathsf{d}(x,\tilde x)\;. \]
    
    \item[(ii)]
    There exist $C_{Reg},c>0$ such that for all $x,\tilde{x}\in D$
    \[ \TV\bigr(\pi_{\A}(x,\cdot),\,\pi_{\A}(\tilde x,\cdot)\bigr)\ \leq\ C_{Reg}\,\mathsf{d}(x,\tilde x)\ +\ c\;. \]
    \end{itemize}

    Regarding the probability of rejection, we assume:
    \begin{itemize}
    \item[(iii)]
    There exists an epoch length $\mathfrak E\in\mathbb N$ and $b>0$ such that
    \[ 2\,\mathfrak E\,\sup\nolimits_{x\in D}\mathbb{P}(A(x)^c)\ +\ C_{Reg}\,\diam_{\mathsf{d}}(D)\,\exp\bigr(-\rho(\mathfrak E-1)\bigr)\ +\ b\ \leq\ 1\ -\ c\;. \]
    \end{itemize}

    Regarding the exit probability of the accept/reject chain from $D$, we assume:
    \begin{itemize}
    \item[(iv)]
    Over the total number of transition steps $\mathfrak H=\mathfrak E\,\lceil b^{-1}\log(2/\eps)\rceil$, it holds that
    \[ \mathbb P\bigr(T\:\leq\:\mathfrak H\bigr)\ \leq\ \eps/4 \]
     for both $X^{\AR}_0\sim\nu$ and $X^{\AR}_0\sim\mu$, where $T=\inf\{k\geq0\,:\,X^{\AR}_k\notin D\}$.
    \end{itemize}
    Then, the mixing time of the accept/reject chain satisfies:
    \[ \tmix(\eps,\nu)\ =\ \inf\bigr\{n\geq0\,:\,\TV(\nu\pi_{\AR}^n,\,\mu)\leq\eps\bigr\}\ \leq\ \mathfrak H\;. \]
\end{theorem}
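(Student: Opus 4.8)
The plan is to run a coupling argument organized into \emph{epochs} of length $\mathfrak E$, where within each epoch we attempt to contract the two coupled copies and at the epoch boundary attempt a one-shot exact meeting, all while staying inside $D$. Let $X^{\AR}_n\sim\nu\pi_{\AR}^n$ and $\widetilde X^{\AR}_n\sim\mu\pi_{\AR}^n=\mu$. By the coupling lemma \eqref{eq:coupling_lemma}-style bound, it suffices to construct a coupling with $\mathbb P(X^{\AR}_{\mathfrak H}\ne\widetilde X^{\AR}_{\mathfrak H})\le\eps$. First I would condition on the event $E$ that neither chain exits $D$ within the first $\mathfrak H$ steps; by assumption (iv) and a union bound, $\mathbb P(E^c)\le\eps/2$, so it remains to show the chains meet with probability at least $1-\eps/2$ on $E$ — or more precisely, to build the coupling so that failure-to-meet plus $E^c$ together have probability $\le\eps$.

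The core is a single-epoch estimate. Within an epoch, on the acceptance events $A(x)\cap A(\tilde x)$ for both copies I would use the same randomness $\omega$ to drive $\Phi^{\A}$ for both, invoking the Wasserstein contraction (i) to get, by iterating the contraction across the $\mathfrak E$ steps of the epoch (and using that each single step is a contraction), that the expected distance shrinks geometrically like $(1-\rho)^{\mathfrak E-1}$ times $\diam_{\mathsf d}(D)$. The cost of a rejection in either copy during the epoch is controlled by assumption (iii): each rejection step can fail the synchronous coupling, contributing at most $2\mathfrak E\sup_{x\in D}\mathbb P(A(x)^c)$ to the probability of a "bad" epoch (the factor $2$ for two chains, $\mathfrak E$ for the steps in the epoch, via a union bound). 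At the end of the epoch, the pair is within Wasserstein distance $\le(1-\rho)^{\mathfrak E-1}\diam_{\mathsf d}(D)$ in expectation, hence — after a Markov-inequality step or by working directly with the TV-regularity bound (ii) applied to the accept kernels and Markov's inequality on the distance — the conditional probability of \emph{not} coupling exactly in that epoch's final accept step is bounded by $C_{Reg}(1-\rho)^{\mathfrak E-1}\diam_{\mathsf d}(D)+c$ plus the rejection cost, and assumption (iii) is exactly engineered so that this whole quantity is $\le 1-b$. Thus each epoch, conditional on the chains not having met and not having exited $D$, independently "succeeds" (chains meet and stay met, using that once met they can be kept together on $A(x)\cap A(\tilde x)=A(x)$ since the kernels agree) with probability at least $b$.

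Chaining the epochs: the number of epochs is $N=\lceil b^{-1}\log(2/\eps)\rceil$, so the probability of never meeting across all epochs (ignoring exits) is at most $(1-b)^N\le e^{-bN}\le\eps/2$. Combining with $\mathbb P(E^c)\le\eps/2$ gives $\mathbb P(X^{\AR}_{\mathfrak H}\ne\widetilde X^{\AR}_{\mathfrak H})\le\eps$, and since $\mathfrak H=\mathfrak E N$, the coupling lemma yields $\tmix(\eps,\nu)\le\mathfrak H$. The main obstacle I anticipate is the bookkeeping that makes the three error contributions — exit from $D$, rejection within an epoch, and failure of the one-shot meeting given closeness — cleanly additive and epoch-wise independent: one must set up the coupling so that on the rejection or exit events we simply "give up" on that epoch without corrupting the probabilities, and handle the fact that (ii) carries an irreducible additive term $c$ that does not vanish as the chains get close, which is why (iii) must absorb both $c$ and the geometric-decay term simultaneously. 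Care is also needed because the Wasserstein contraction is only assumed on $D$, so the contraction-iteration within an epoch must be run under the conditioning that the chains remain in $D$, linking back to (iv).
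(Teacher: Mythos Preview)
Your proposal is correct and follows essentially the same approach as the paper: organize the coupling into epochs of $\mathfrak E-1$ contractive accept-steps plus one one-shot step, bound the per-epoch failure probability by $2\mathfrak E\sup_{x\in D}\mathbb P(A(x)^c)+C_{Reg}\diam_{\mathsf d}(D)e^{-\rho(\mathfrak E-1)}+c\le 1-b$ via (iii), iterate over $\lceil b^{-1}\log(2/\eps)\rceil$ epochs, and invoke (iv) to localize to $D$. The paper itself only sketches this and defers details to \cite{BouRabeeOberdoerster2024}; note that no Markov inequality is needed at the epoch boundary, since (ii) can be applied conditionally on the positions after $\mathfrak E-1$ steps and then averaged over the contractive coupling directly.
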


The proof of Theorem~\ref{thm:ARmix} can be straightforwardly adapted from the corresponding result for Metropolized Markov chains introduced in \cite{BouRabeeOberdoerster2024}.  Here, we outline how the assumptions come together in the proof and refer to the reference for a more detailed account.

Assumptions (i) and (ii) can be verified by constructing $\mathcal W^1_{\mathsf{d}}$-contractive  and one-shot couplings of the accept chain $\pi_{\A}$ within the domain $D$.  The contractive coupling reduces the distance between the coupled copies,  while the one-shot coupling allows for exact meeting with a certain probability once the copies are sufficiently close.  This approach, which combines a contractive coupling followed by a one-shot coupling, is a standard technique in the literature \cite{roberts2002one,madras2010quantitative,EbMa2019,monmarche2021high, BouRabeeEberle2023}.

Specifically, over an epoch of $\mathfrak E-1$ contractive transitions followed by one one-shot transition, these assumptions imply the minorization condition
\[ \TV\bigr(\pi_{\A}^{\mathfrak E}(x,\cdot),\,\pi_{\A}^{\mathfrak E}(\tilde x,\cdot)\bigr)\ \leq\ C_{Reg}\,\diam_{\mathsf{d}}(D)\,\exp(-\rho(\mathfrak E-1))\ +\ c \]
within the domain $D$.  If $c<1$, the right-hand side of this inequality is strictly less than $1$ for sufficiently large $\mathfrak E$.  This ensures a nonzero probability of exact meeting between the coupled copies of the accept chain over the epoch.

By isolating the probability of encountering a rejection during the epoch, two copies of the accept/reject chain can be reduced to copies of the accept chain, resulting in
\begin{align*}
    \TV\bigr(\pi_{\AR}^{\mathfrak E}(x,\cdot),\,\pi_{\AR}^{\mathfrak E}(\tilde x,\cdot)\bigr)\ &\leq\ 2\,\mathfrak E\,\sup\nolimits_{x\in D}\mathbb{P}(A(x)^c)\ +\ \TV\bigr(\pi_{\A}^{\mathfrak E}(x,\cdot),\,\pi_{\A}^{\mathfrak E}(\tilde x,\cdot)\bigr) \\
    &\leq\ 2\,\mathfrak E\,\sup\nolimits_{x\in D}\mathbb{P}(A(x)^c)\ +\ C_{Reg}\,\diam_{\mathsf{d}}(D)\,\exp(-\rho(\mathfrak E-1))\ +\ c\;.
\end{align*}
Assumption (iii) ensures that the probability of rejection over sufficiently long epochs is suitably controlled for the right hand side to be bounded above by $1-b\leq e^{-b}$.  By iterating over $\lceil b^{-1}\log(2/\eps)\rceil$ epochs, mixing to accuracy $\eps$ is achieved.  Assumption (iv)  justifies restricting the argument to the domain $D$.

\section{NUTS in the shell}\label{sec:NUTSandGaussianGeometry}

In this section, we analyze the U-turn property, as well as the orbit and index selection of NUTS, within the shells $D_\alpha$ in which the Gaussian measure concentrates.  Additionally, we establish the stability of NUTS in these shells.

\subsection{U-turn property}\label{sec:u-turn}

Let $t_-\leq0\leq t_+$.  In analogy to the U-turn property for leapfrog orbits defined in \eqref{eq:u-turn}, we define the U-turn property for the exact Hamiltonian flow $\phi_t$ over the interval $[t_-, t_+]$.  For any initial condition $(x,v) \in \mathbb{R}^{2d}$, the exact orbit $\{ \phi_t(x,v) \}_{t \in [t_-,t_+]}$ has the U-turn property if
\begin{equation}\label{eq:u-turn_exact}
    v_+\cdot(x_+-x_-)\ <\ 0\quad\text{or}\quad v_-\cdot(x_+-x_-)\ <\ 0\;,
\end{equation}
where $(x_-,v_-)=\phi_{t_-}(x,v)$ and $(x_+,v_+)=\phi_{t_+}(x,v)$.

To built intuition, consider $x\in\sqrt d\,\mathcal S^{d-1}$ with tangential velocity $|v|^2=d$, i.e., $v \cdot x =0$.  Then the position component of the exact Hamiltonian flow 
\begin{equation}\label{eq:HDpos}
	\Pi(\phi_t(x,v))\ =\ \cos t\,x+\sin t\,v 
\end{equation}
is constrained to a $2$-dimensional slice of the $(d-1)$-sphere.
Inserting \eqref{eq:HDpos} into \eqref{eq:u-turn_exact} yields
\begin{equation}\label{eq:u-turn_SPideal}
    v_+\cdot(x_+-x_-)\ =\ \sin(t_+-t_-)\,d\ =\ v_-\cdot(x_+-x_-)\;.
\end{equation}
The dot products in \eqref{eq:u-turn_SPideal} change sign when the orbit length in physical time satisfies $t_+-t_-\in\pi\,\mathbb Z$.  For $t_+-t_-\in[0,\pi]$, the orbit  does not have the U-turn property, whereas for $t_+-t_-\in(\pi,2\pi)$,  the orbit has the U-turn property, as illustrated in Figures \ref{fig:u-turn} and \ref{fig:sin}.

Turning from this idealized setting to the realistic one, we now consider leapfrog orbits starting from $x\in D_\alpha$ with velocity $v\sim\gamma$.  This introduces additional terms in \eqref{eq:u-turn_SPideal}, some of which we control via the following bound based on concentration of $\gamma$.

\medskip

\begin{lemma}\label{lem:E}
	Let $0\leq\alpha,r\leq d$ and $v\sim\gamma$.
    Then the probability of the event
    \[ E_{\alpha,r}\ =\ \Bigr\{v\ :\ \max\bigr(||v|^2-d|,\,\sup\nolimits_{x\in D_\alpha}|x\cdot v|\bigr)\ \leq\ r\Bigr\} \]
    satisfies $\mathbb P(E_{\alpha,r})\geq1-4\,e^{-r^2/8d}$.
\end{lemma}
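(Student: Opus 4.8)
The plan is to bound the probability of the complement $E_{\alpha,r}^c$ by splitting it into two bad events and applying standard Gaussian concentration to each. Write $E_{\alpha,r}^c \subseteq \{||v|^2-d|>r\}\cup\{\sup_{x\in D_\alpha}|x\cdot v|>r\}$, so by a union bound it suffices to show each of these has probability at most $2e^{-r^2/8d}$.

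For the first event, note that $|v|^2\sim\chi^2(d)$ when $v\sim\gamma$, equivalently $|v|^2=\sum_{i=1}^d v_i^2$ is a sum of independent sub-exponential random variables centered at $d$. The Hanson–Wright / Bernstein-type tail bound (cf. \cite{Vershynin}) gives $\mathbb P(||v|^2-d|>r)\leq 2\exp(-c\min(r^2/d,r))$, and since $r\leq d$ the $r^2/d$ term dominates, yielding a bound of the form $2e^{-r^2/8d}$ after tracking the constant; this is presumably exactly the computation behind \eqref{eq:Gaussian_HW}, which the paper attributes to Lemma~\ref{lem:E}.

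For the second event, the key observation is that the supremum over $x\in D_\alpha$ can be made explicit. By Cauchy–Schwarz, $|x\cdot v|\leq |x|\,|\Pi_x v|$ where $\Pi_x$ projects onto the line spanned by $x$; more usefully, $\sup_{x\in D_\alpha}|x\cdot v| = \sup_{x\in D_\alpha}|x|\cdot |\langle x/|x|, v\rangle|$, and since every $x\in D_\alpha$ satisfies $|x|^2\leq d+\alpha\leq 2d$, we get $\sup_{x\in D_\alpha}|x\cdot v|\leq \sqrt{2d}\,\sup_{u\in\mathcal S^{d-1}}|\langle u,v\rangle| = \sqrt{2d}\,|v|$. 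That bound is too lossy. Instead I would use that $D_\alpha$ is contained in the ball of radius $\sqrt{d+\alpha}$, so $\sup_{x\in D_\alpha}|x\cdot v|\leq \sqrt{d+\alpha}\,|v|\leq\sqrt{2d}\,|v|$ — still lossy. The right move: for \emph{fixed} $x$, $x\cdot v\sim\mathcal N(0,|x|^2)$, so $\mathbb P(|x\cdot v|>r)\leq 2\exp(-r^2/2|x|^2)\leq 2\exp(-r^2/(4d))$ using $|x|^2\leq 2d$ — but we need a supremum over the whole shell, not a fixed $x$. Since $D_\alpha$ is $d$-dimensional the supremum genuinely is of order $\sqrt d\,|v|\approx d$, not $\sqrt d$. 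Re-reading the statement: the event $E_{\alpha,r}$ with $r\leq d$ asks $\sup_{x\in D_\alpha}|x\cdot v|\leq r$, and $r$ can be as large as $d$, which is consistent with $\sup_{x\in D_\alpha}|x\cdot v|\asymp d$. So in fact $\sup_{x\in D_\alpha}|x\cdot v| = \sqrt{d+\alpha}\,|v|$ (achieved at $x$ a positive multiple of $v$), and the event $\{\sqrt{d+\alpha}\,|v|>r\}$ reduces to $\{|v|^2 > r^2/(d+\alpha)\}$, which is again a $\chi^2(d)$ tail, controllable by the same Bernstein bound whenever $r^2/(d+\alpha)$ exceeds $d$ by the right margin.

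The main obstacle is therefore purely a matter of getting the constants right: one must check that with $r\leq d$ and $\alpha\leq d$, the threshold $r^2/(d+\alpha)\geq r^2/(2d)$ lies far enough above $d$ (or, when it does not, the bound is vacuous and there is nothing to prove since $4e^{-r^2/8d}\geq 4e^{-d/8}$ may exceed $1$... actually $4e^{-r^2/8d}$ is only a useful bound when $r$ is not too small, and is $\geq 1$ precisely when $r^2\leq 8d\log 4$, in which case the claim is trivially true). So the proof structure is: (1) if $r^2 < 8d\log 4$ the bound holds trivially; (2) otherwise, union bound over the two bad events; (3) bound $\mathbb P(||v|^2-d|>r)$ by a $\chi^2$ Bernstein inequality giving $2e^{-r^2/8d}$; (4) identify $\sup_{x\in D_\alpha}|x\cdot v| = \sqrt{d+\alpha}\,|v|$ and bound $\mathbb P(\sqrt{d+\alpha}\,|v|>r)$ by the same $\chi^2$ tail, again landing at $2e^{-r^2/8d}$ after using $\alpha\leq d$; (5) sum to get $4e^{-r^2/8d}$. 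I expect step (4)'s constant-chasing — reconciling the factor $\sqrt{d+\alpha}\leq\sqrt{2d}$ with the clean exponent $r^2/8d$ — to be the only place requiring care, and it is exactly the kind of calculation the paper defers to \cite{Vershynin}.
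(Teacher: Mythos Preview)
Your union-bound decomposition and the treatment of $\{||v|^2-d|>r\}$ are fine and match the paper, which carries out the MGF/exponential-Markov computation by hand rather than citing Bernstein.

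The difficulty is entirely in step~(4), and your hesitation there is warranted: the claimed bound does not go through. You correctly compute $\sup_{x\in D_\alpha}|x\cdot v|=\sqrt{d+\alpha}\,|v|$, so the second bad event becomes $\{|v|^2>r^2/(d+\alpha)\}$. But with $r\leq d$ and $\alpha\geq 0$ one has $r^2/(d+\alpha)\leq d=\mathbb E|v|^2$, so the threshold sits at or below the mean of $|v|^2$ and the probability is close to $1$, not $2e^{-r^2/8d}$. Your trivial-case cutoff $r^2<8d\log 4$ does not rescue this: for $r=10\sqrt d$, say, the asserted bound $4e^{-r^2/8d}$ is tiny while $\mathbb P(\sup_{x\in D_\alpha}|x\cdot v|>r)\approx 1$. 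Taken literally with the supremum inside the event, the lemma is false.

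The paper's own proof also writes the supremum but then bounds it by $\mathbb P(|Z|>r/\sqrt{2d})$ with $Z\sim\mathcal N(0,1)$ --- a step that is only valid when $x$ is a single fixed point in $D_\alpha$, so that $\tfrac{x}{|x|}\cdot v\sim\mathcal N(0,1)$. That fixed-$x$ reading gives $\mathbb P(|x\cdot v|>r)\leq 2e^{-r^2/4d}$ and is all that is ever used downstream: in every application the current state $x$ is fixed before $v$ is drawn, and one only invokes $|x\cdot v|\leq r$ for that particular $x$. So the supremum in the definition of $E_{\alpha,r}$ is a slip in the paper's statement; your first instinct --- bound $|x\cdot v|$ for fixed $x$ via $\tfrac{x}{|x|}\cdot v\sim\mathcal N(0,1)$, which you dismissed because it did not cover the supremum --- was exactly the intended argument.
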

\begin{proof}[Proof of Lemma \ref{lem:E}]
Let $0\leq\alpha,r\leq d$ and $v\sim\gamma$.  Since $x\in D_\alpha$ satisfies $0 < |x|\leq\sqrt{\alpha+d}\leq\sqrt{2d}$ by \eqref{eq:D_alpha} and \[
|x \cdot v| \ = \ |x| \left| \frac{x}{|x|} \cdot v \right| \ \le \  \sqrt{2d} \left| \frac{x}{|x|} \cdot v \right| \;, \] we have the following bound for $Z\sim\mathcal N(0,1)$,
\[ \mathbb P\bigr(\sup\nolimits_{x\in D_\alpha}|x\cdot v|>r\bigr)\ \leq\ \mathbb P\bigr(|Z|>r/\sqrt{2d}\bigr)\ \leq\ 2\,e^{-r^2/4d}\;. \]

Moreover, the components $v_i$ are i.i.d.~standard normal random variables for $i \in \{1, \dots, d\}$.  Using the exponential Markov inequality, for $0<\lambda\leq1/3$ and $Z\sim\mathcal N(0,1)$, we obtain
\begin{align*}
    \mathbb P(|v|^2-d>r)\ &=\ \mathbb P\Bigr(\sum_{i=1}^d(v_i^2-1)>r\Bigr)\ \leq\ e^{-\lambda r}\,\mathbb Ee^{\lambda\sum_{i=1}^d(v_i^2-1)}\ =\ e^{-\lambda r}\bigr(\mathbb Ee^{\lambda(Z^2-1)}\bigr)^d\\
	&=\ e^{-\lambda r}\bigr(e^{-\lambda}(1-2\lambda)^{-1/2}\bigr)^d\ \leq\ e^{-\lambda r+2\lambda^2d}\;,
\end{align*}
where in the last step we used the inequality $e^{2 \mathsf{x}^2+\mathsf{x}} \sqrt{1-2 \mathsf{x}} \ge 1$ valid for $\mathsf{x} \in [0,1/3]$.
Inserting $\lambda=\min(r/4d,1/3)$, and repeating the argument for $\mathbb P(-(|v|^2-d)>r)$, we conclude that
\[ \mathbb P\bigr(||v|^2-d|>r\bigr)\ \leq\ 2\,e^{-r^2/8d} \]
which completes the proof.
\end{proof}

Using the concentration result from Lemma~\ref{lem:E}, the next lemma extends the uniformity of the U-turn property from the idealized setting in \eqref{eq:u-turn_SPideal} to the realistic scenario in \eqref{eq:u-turn}. This formalization accounts for deviations present in the realistic setting, while preserving the essential uniformity of the U-turn property within the shells with high probability.

\medskip

\begin{lemma}\label{lem:u-turn}
    Let $0\leq\alpha,r\leq d$, $0<h\leq1$ and set
    \begin{equation}\label{eq:u-turn_delta}
    \delta\ =\ \frac\pi2\bigr(5\max(\alpha,r)/d+h^2\bigr)\;.
    \end{equation}
    Let $v\sim\gamma$. In the event $E_{\alpha,r}$ defined in Lemma \ref{lem:E}, for any $x\in D_\alpha$ and any index orbit $I$, 
    \begin{align*}
    h(|I|-1)\ \in\ \bigr[\delta,\,\pi-\delta\bigr]\quad&\Rightarrow\quad\text{The index orbit $I$ does not have the U-turn property.} \\
    h(|I|-1)\ \in\ \bigr(\pi+\delta,\,2\pi-\delta\bigr)\quad&\Rightarrow\quad\text{The index orbit $I$ has the U-turn property.}
    \end{align*}
\end{lemma}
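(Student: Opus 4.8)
The plan is to write the leapfrog flow starting from $(x,v)$ with $x\in D_\alpha$ and $v\in E_{\alpha,r}$ as a perturbation of the idealized rotational flow \eqref{eq:HDpos}, and then to show that all the perturbation terms contribute at most $O(\delta \cdot d)$ to the dot products $v_\pm\cdot(x_+-x_-)$, so that the sign is dictated by the leading term $\sin(h(|I|-1))\,d$. Concretely, I would set $L=|I|-1$ (so the physical orbit length is $hL$) and write $(x_-,v_-)=\Phi_h^{\min I}(x,v)$, $(x_+,v_+)=\Phi_h^{\max I}(x,v)$. Using $\Phi_h^{\max I}=\Phi_h^{L}\circ\Phi_h^{\min I}$, it suffices to understand one leapfrog segment of length $L$ started from an arbitrary point of the orbit; the endpoints only differ by the shift $\min I$, which does not affect $x_+-x_-$.

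The key decomposition is this: there is an exact $2\times 2$ matrix representation of $L$ leapfrog steps for the harmonic oscillator $H(x,v)=|x|^2/2+|v|^2/2$ (the relevant Hamiltonian here since $U(x)=|x|^2/2$ for the canonical Gaussian), and this matrix is a rotation by angle $L\theta_h$ in a rescaled coordinate, where $\theta_h = h + O(h^3)$ is the leapfrog frequency. So $\Pi(\Phi_h^L(x,v)) = \cos(L\theta_h)\,x + \theta_h^{-1}\sin(L\theta_h)\,v + (\text{scaling corrections})$, and similarly for the velocity component. Then $x_+ - x_- = (\cos(L\theta_h)-1)\,x_{\min} + \theta_h^{-1}\sin(L\theta_h)\,v_{\min} + \cdots$ where $(x_{\min},v_{\min})=\Phi_h^{\min I}(x,v)$. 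Dotting with $v_+$ (or $v_-$) and collecting terms, the leading contribution is $\sin(L\theta_h)\,|v|^2 \approx \sin(hL)\,d$, while the remaining terms are controlled by: (a) $|x\cdot v|\le r$ and $\big||v|^2-d\big|\le r$ on $E_{\alpha,r}$, giving relative error $O(r/d)$; (b) $\big||x|^2-d\big|\le\alpha$, giving $O(\alpha/d)$; (c) the discretization error $L\theta_h - hL = O(h^2 \cdot hL) = O(h^2)$ inside the sine, plus the $\theta_h^{-1}-1 = O(h^2)$ scaling error, giving $O(h^2)$; and (d) the fact that leapfrog does not exactly preserve the sphere, but by stability (Lemma~\ref{lem:stab_leapfrog}, cited in the discussion) the whole orbit stays in a comparable shell, so $|x_{\min}|^2$, $|v_{\min}|^2$ stay within $O(\max(\alpha,r))$ of $d$. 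Summing, the total deviation of $d^{-1}\min(v_+\cdot(x_+-x_-),\,v_-\cdot(x_+-x_-))$ from $\sin(hL)$ is at most $\tfrac\pi2(5\max(\alpha,r)/d + h^2)=\delta$ — this is exactly \eqref{eq:u-turn_disc} with the explicit constant in \eqref{eq:u-turn_delta}. Finally, since $|\sin t|\ge \tfrac{2}{\pi}\,\mathrm{dist}(t,\pi\mathbb Z)$ for $t$ near $\pi\mathbb Z$ and more simply $\sin t \ge \sin\delta \ge \tfrac{2}{\pi}\delta$ for $t\in[\delta,\pi-\delta]$ while $\sin t \le -\tfrac2\pi\delta$ for $t\in(\pi+\delta,2\pi-\delta)$, the sign of both dot products is determined: positive (no U-turn) on $[\delta,\pi-\delta]$, negative (U-turn, since the $\min$ is negative so at least one of the two conditions in \eqref{eq:u-turn} holds) on $(\pi+\delta,2\pi-\delta)$.

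The main obstacle I expect is bookkeeping the error terms cleanly — in particular handling the two distinct endpoints $\Phi_h^{\min I}$ and $\Phi_h^{\max I}$ rather than a single symmetric orbit (the U-turn property is deliberately \emph{not} centered at the initial condition, as emphasized after \eqref{eq:u-turn}), and making sure the constant $\tfrac\pi2$ and the coefficient $5$ in \eqref{eq:u-turn_delta} actually come out of the triangle-inequality estimates. A secondary subtlety is that the relevant quantities $x_{\min}\cdot v_{\min}$, $|x_{\min}|^2$, $|v_{\min}|^2$ at the shifted point $\Phi_h^{\min I}(x,v)$ must be related back to the controlled quantities at $(x,v)$; this is where I would invoke the leapfrog stability result to keep the entire orbit in $D_{\alpha'}$ with $\alpha'=O(\max(\alpha,r))$ and $|v|^2$ comparably close to $d$, rather than trying to track each of the $L$ steps individually. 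The $h\le 1$ hypothesis is what makes $\theta_h = h+O(h^3)$ and the scaling corrections genuinely $O(h^2)$-small, and $\alpha,r\le d$ keeps the relative errors below an absolute constant so the sine term dominates.
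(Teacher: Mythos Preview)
Your overall strategy --- write the leapfrog dot products as $\sin(h(|I|-1))\,d$ plus controlled perturbations of size $O(\delta d)$, then use $|\sin t|\ge\tfrac2\pi\operatorname{dist}(t,\pi\mathbb Z)$ --- is exactly what the paper does. The difference is in how the perturbation is organized, and your proposed organization creates the very bookkeeping obstacle you flag.

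You shift the basepoint to $(x_{\min},v_{\min})=\Phi_h^{\min I}(x,v)$ and then plan to control $|x_{\min}|^2-d$, $|v_{\min}|^2-d$, $x_{\min}\cdot v_{\min}$ by invoking leapfrog stability. This is unnecessary and is precisely what makes the constants hard to pin down. The paper instead uses the closed-form iterate
\[
\Phi_h^{t/h}(x,v)=\Bigl(\cos(\beta_h t)\,x+\tfrac{\sin(\beta_h t)}{(1-h^2/4)^{1/2}}\,v,\ -\sin(\beta_h t)(1-h^2/4)^{1/2}\,x+\cos(\beta_h t)\,v\Bigr)
\]
with $\beta_h=h^{-1}\arccos(1-h^2/2)=1+O(h^2)$, applied at \emph{both} $t=t_-=h\min I$ and $t=t_+=h\max I$ from the \emph{original} $(x,v)$. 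Since the map is linear, $v_+\cdot(x_+-x_-)$ is then an explicit quadratic form in $(x,v)$: a linear combination of $|x|^2$, $|v|^2$, and $x\cdot v$ with trigonometric coefficients. These three scalars are \emph{directly} controlled on $E_{\alpha,r}$ by $||x|^2-d|\le\alpha$, $||v|^2-d|\le r$, $|x\cdot v|\le r$ --- no stability lemma, no shifted quantities. The paper groups the deviation from $\sin(t_+-t_-)\,d$ into five pieces (the $\beta_h$ vs.\ $1$ frequency error, the $|x|^2-d$ term, the $|v|^2-d$ term, the $(1-h^2/4)^{\pm1/2}-1$ scaling errors, and the $x\cdot v$ cross term), each bounded by elementary trigonometric inequalities, and the sum comes out to at most $5\max(\alpha,r)+h^2d=\tfrac2\pi\delta d$.

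So: drop the shift to $(x_{\min},v_{\min})$ and the appeal to Lemma~\ref{lem:stab_leapfrog}; plug the explicit formula in at both endpoints and expand. The asymmetry in $t_-,t_+$ that worried you causes no trouble --- only the difference $t_+-t_-$ survives in the leading sine term, and the remaining coefficients are bounded in absolute value regardless of the individual $t_\pm$. With this change your proof goes through and matches the paper's, including the constant $5$.
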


Lemma~\ref{lem:u-turn} demonstrates a striking uniformity in the U-turn property within the shells, but this uniformity holds only if the orbit length in physical time, given by $h (|I|-1)$, lies within the specified intervals. If the step size  is such that $h (2^k-1)$, $k\in\mathbb N$, fails to enter the second interval, possibly due to periodicities in NUTS’ doubling procedure for orbit selection described in Section~\ref{sec:NUTS}, a U-turn might never be detected, as illustrated in Figures~\ref{fig:orbitfig} (b), \ref{fig:indexfig} (a)
and \ref{fig:fixfig}. To prevent this, condition~\eqref{eq:thm_h_rest} on the step size is imposed to rule out the possibility of missed U-turns.  Note that, when $h=\widetilde O(d^{-1/4})$, $\alpha=\widetilde O(d^{1/2})$, and $r=\widetilde O(d^{1/2})$ as in Theorem~\ref{thm:NUTSmixing}, we have $\delta = \widetilde O(d^{-1/2})$. This indicates that the  portion of the above intervals where the $U$-turn property   potentially depends on local effects shrinks at a rate $\widetilde O(d^{-1/2})$.

\begin{proof}[Proof of Lemma \ref{lem:u-turn}]
Let $0<h\leq1$, $0\leq\alpha,r\leq d$, and assume $v\sim\gamma$ to be in $E_{\alpha,r}$ so that \begin{equation}
\label{eq:inEar}
\max\bigr(||v|^2-d|,\,\sup\nolimits_{x\in D_\alpha}|x\cdot v|\bigr)\ \leq\ r \;.
\end{equation}
Let $x\in D_\alpha$ and $I$ be an index orbit such that the corresponding leapfrog orbit $\{\Phi_h^i(x,v)\}_{i\in I}$ has length in physical time $t_+-t_- =h(|I|-1)\leq2\pi$, where \[
t_+=h\max I \quad \text{and} \quad t_-=h\min I \;.
\]
Let $(x_-,v_-)=\Phi_h^{t_-/h}(x,v)$ and $(x_+,v_+)=\Phi_h^{t_+/h}(x,v)$.  

In the standard Gaussian case, we recall that a single step of the underlying leapfrog flow from initial condition $(x,v) \in \mathbb{R}^{2d}$ can be expressed as
\begin{equation}\label{eq:GaussianLF_1}
    \Phi_h(x,v)\ =\ \left(\cos(\beta_hh)x+\frac{\sin(\beta_hh)}{(1-h^2/4)^{1/2}} v,-\sin(\beta_hh)(1-h^2/4)^{1/2}x+\cos(\beta_hh)v\right)
\end{equation}
where we have introduced \[ \beta_h\ =\ \frac{\arccos(1-h^2/2)}{h}\ =\ 1+O(h^2) \quad \text{as  $h  \searrow 0$.}
\] 
In other words, the leapfrog flow corresponds to an elliptical rotation.   Iterating \eqref{eq:GaussianLF_1} for $t/h \in \mathbb Z$ steps results in: \begin{equation}\label{eq:GaussianLF}
    \Phi_h^{t/h}(x,v)\ =\ \left(\cos(\beta_ht)x+\frac{\sin( \beta_h t)}{(1-h^2/4)^{1/2}} v,-\sin(\beta_ht)(1-h^2/4)^{1/2}x+\cos(\beta_ht)v\right) \;. 
\end{equation}

Inserting \eqref{eq:GaussianLF} into the first dot product in the U-turn property \eqref{eq:u-turn}, and applying the triangle inequality, we see that
\begin{align*}
\bigr|v_+\cdot(x_+-x_-)-\sin(t_+-t_-)d\bigr| \ \le \ \rn{1} + \rn{2} + \rn{3} + \rn{4} + \rn{5} 
\end{align*}
where we have introduced: 
\begin{align*}
\rn{1} \ &=\  \bigr|\sin(\beta_h(t_+-t_-))-\sin(t_+-t_-) \bigr| d \;, \\
\rn{2} \ &=\ \left|\left( \frac12\sin(2\beta_ht_+)-\sin(\beta_ht_+)\cos(\beta_ht_-)\right)(1-h^2/4)^{1/2}(|x|^2-d) \right| \;, \\
\rn{3} \ &=\ \left| \left(\frac12\sin(2\beta_ht_+)-\cos(\beta_ht_+)\sin(\beta_ht_-) \right)(1-h^2/4)^{-1/2}(|v|^2-d)\right| \;, \\
\rn{4} \ &= \  \left| \frac12\sin(2\beta_ht_+)\bigr((1-h^2/4)^{-1/2}-(1-h^2/4)^{1/2}\bigr)  +\sin(\beta_ht_+)\cos(\beta_ht_-)\bigr((1-h^2/4)^{1/2}-1\bigr) \right. \\
& \qquad \left. -\cos(\beta_ht_+)\sin(\beta_ht_-)\bigr((1-h^2/4)^{-1/2}-1\bigr) \right| d \;, \\
\rn{5} \ &= \ \bigr| \cos(2\beta_ht_+)-\cos(\beta_h(t_++t_-)) \bigr| \left| x\cdot v \right| \;.
\end{align*}
Since the sine function  is $1$-Lipschitz and $\beta_h>1$, \begin{equation} \label{eq:uturn_rn1}
\rn{1} \ \le \ (\beta_h-1) (t_+-t_-)d \;.
\end{equation}
Using the elementary bound $\left|\sin(\mathsf{a})/2-\sin(\mathsf{b}) \cos(\mathsf{c})\right| \le 3/2$ valid for all $\mathsf{a},\mathsf{b},\mathsf{c} \in \mathbb{R}$, and the bound $\left|\cos(\mathsf{a})-\cos(\mathsf{b})\right| \le 2$ valid for all $\mathsf{a},\mathsf{b} \in \mathbb{R}$, we obtain
\begin{equation} \label{eq:uturn_rn2to5}
\begin{aligned}
\rn{2}+\rn{3} + \rn{4} + \rn{5} \ & \le \ \frac32 \left||x|^2-d \right|+\frac32(1-h^2/4)^{-1/2} \left||v|^2-d \right| \\
& \quad +\frac32\bigr((1-h^2/4)^{-1/2}-(1-h^2/4)^{1/2}\bigr) d + 2|x\cdot v| \;.
\end{aligned}
\end{equation}
Combining \eqref{eq:uturn_rn1} and \eqref{eq:uturn_rn2to5}, and inserting \eqref{eq:inEar}, we obtain
\begin{align*}
\bigr|v_+\cdot(x_+-x_-)-\sin(t_+-t_-)d\bigr| \ \leq\ \frac32(\alpha+r)+h^2d+2r\ \leq\ 5\max(\alpha,r)+h^2d
\end{align*}
which implies that that
\[ v_+\cdot(x_+-x_-)\ =\ \sin(t_+-t_-)d\ +\ R \quad \text{with} \quad |R|\ \leq\ 5\max(\alpha,r)+h^2d\ =\ \frac2\pi\delta d \]
where $R$ depends on $x,v,h,t_+,$ and $t_-$.  An analogous identity holds for the second scalar product in the U-turn property in \eqref{eq:u-turn} and is therefore omitted.

As illustrated in Figure~\ref{fig:sin}, there are two cases that can be controlled:
\begin{description}
\item[Case 1:]
If $t_+-t_-=h(|I|-1)\in[\delta,\pi-\delta]$, then $\sin(t_+-t_-)\geq\frac2\pi\delta$, and both dot products (with different $R$) in the U-turn property are equal to
\[ \sin(t_+-t_-)d+R\ \geq\ \frac2\pi\delta d+R\ \geq\ 0\;, \]
which implies that the index orbit $I$ does not have the U-turn property.
\item[Case 2:]
If $t_+-t_-=h(|I|-1)\in(\pi+\delta,2\pi-\delta)$, then $\sin(t_+-t_-)<-\frac2\pi\delta$ so that both scalar products (with different $R$) in the U-turn property equal
\[ \sin(t_+-t_-)d+R\ <\ -\frac2\pi\delta d+R\ \leq\ 0\;, \]
which implies that the index orbit $I$ has the U-turn property.
\end{description}
This completes the proof.
\end{proof}

\subsection{Orbit selection}

Lemmas \ref{lem:E} and \ref{lem:u-turn} show that, with high probability, there is uniformity in the U-turn property of an index orbit, provided that the step size satisfies condition~\eqref{eq:h_rest} given below. This uniformity implies that with high probability  NUTS selects an index orbit with fixed path length. Despite this, the orbit selection remains stochastic, sampling the uniform distribution over $\mathfrak I_0(k)$, which is the collection of index orbits of the fixed selected length $2^k$ containing $0$.  Consequently, with high probability, NUTS reduces to Multinoulli HMC.  Given $x \in \mathbb{R}^d$ and $k \in \mathbb{N}$, a transition step of Multinoulli HMC is defined as: 

\begin{algorithm}
\caption*{\textbf{Multinoulli HMC:} $X\sim\pi_{\mathrm{MultHMC}}(x,\cdot)$}
1. Velocity refreshment:  $v\sim\gamma$.\\
2. Orbit selection:  $I\sim\Unif(\mathfrak I_0(k))$.\\
3. Index selection:  $L\sim\Mult\bigr(e^{-(H\circ\Phi_h^i-H)(x,v)}\bigr)_{i\in I}$. \\
4. Output: $X=\Pi(\Phi_h^L(x,v))$.
\end{algorithm}

\smallskip

In the next lemma, we formally establish that, with high probability, NUTS in the shells reduces to a specific instance of Multinoulli HMC.  It is important to note that this lemma depends on the step-size restriction in \eqref{eq:h_rest}; without this restriction, the orbit selection procedure may fail to detect a U-turn, as illustrated in Figure~\ref{fig:orbitfig} (b).
\medskip

\begin{lemma}\label{prop:u-turn}
Let $0\leq\alpha,r\leq d$, $0<h\leq1$, and let $\delta$ be as defined in \eqref{eq:u-turn_delta}. Assume the following step-size condition holds:
\begin{equation}\label{eq:h_rest}
    h(2^{\mathbb N}-1)\ \cap\ \bigr((0,\delta)\cup(\pi-\delta,\pi+\delta]\bigr)\ =\ \emptyset\;.
\end{equation}
For $x\in D_\alpha$ and $v\sim\gamma$, within the event $E_{\alpha,r}$ defined in Lemma \ref{lem:E}, we have
\begin{equation}\label{eq:OtuUnif}
    \mathcal{O}(x,v)\ =\ \Unif\bigr(\mathfrak I_0(\min(k_\ast,k_{\mathrm{max}})\bigr)\;,
\end{equation}
where $k_\ast$ is the unique integer such that $h(2^{k_\ast}-1)\in(\pi+\delta,2\pi-\delta)$.

Thus, in the event $E_{\alpha, r}$, NUTS simplifies to Multinoulli HMC with index orbit length $2^{\min(k_\ast,k_{\mathrm{max}})}$, leading to the identity
\begin{equation}\label{eq:NUTStoMultHMC}
    \pi_{\NUTS}(x,\cdot)\,\ind_{E_{\alpha,r}}\ =\ \pi_{\mathrm{MultHMC}}(x,\cdot)\,\ind_{E_{\alpha,r}}\;.
\end{equation}
\end{lemma}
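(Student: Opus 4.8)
The plan is to show that, in the event $E_{\alpha,r}$ and under the step-size restriction \eqref{eq:h_rest}, the orbit selection procedure described in Section~\ref{sec:NUTS} deterministically (given the random forward/backward doubling choices) terminates at orbit length $2^{\min(k_\ast,k_{\mathrm{max}})}$, and that the resulting index orbit is uniform over $\mathfrak I_0(\min(k_\ast,k_{\mathrm{max}}))$. The identity \eqref{eq:NUTStoMultHMC} then follows immediately by comparing Algorithm~\ref{algo:NUTS} with the Multinoulli HMC algorithm, since steps 1, 3, and 4 are identical and only step 2 differs.

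\textbf{Step 1: $k_\ast$ is well-defined.} First I would verify that there is a unique $k\in\mathbb N$ with $h(2^k-1)\in(\pi+\delta,2\pi-\delta)$. The intervals $(\pi+\delta,2\pi-\delta)$ translated down by $\pi$ give $(\delta,\pi-\delta)$, and consecutive values $h(2^k-1)$ and $h(2^{k+1}-1)$ differ by a factor that is essentially doubling; combined with the realistic assumption $h(2^{k_{\mathrm{max}}}-1)>2\pi$ (stated after \eqref{eq:chosenOL_disc}) and $h\le 1$, the sequence $h(2^k-1)$ passes through the window $(\pi+\delta,2\pi-\delta)$ exactly once. Condition \eqref{eq:h_rest} rules out the sequence landing in the forbidden zones $(0,\delta)$ or $(\pi-\delta,\pi+\delta]$, which is what guarantees cleanly that exactly one $k$ lands in the "U-turn" window rather than straddling a boundary.

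\textbf{Step 2: trace the orbit selection loop using Lemma~\ref{lem:u-turn}.} In the event $E_{\alpha,r}$, for every index orbit $I$ arising during orbit selection, Lemma~\ref{lem:u-turn} applies since $x\in D_\alpha$. I would argue inductively over the doubling steps $j=0,1,2,\dots$: as long as $h(2^{j}-1) < \pi-\delta$ — equivalently, $j < k_\ast$ (using Step 1 and condition \eqref{eq:h_rest} to exclude the boundary cases) — every candidate extension $I'$ and every resulting orbit $I_{j+1}$ has physical-time length $h(2^{j+1}-1)$ or $h(2^j-1)$ lying in $[\delta,\pi-\delta]$, hence by the first implication of Lemma~\ref{lem:u-turn} does \emph{not} have the U-turn property; moreover the sub-orbits in $\mathfrak I(I')$ all have length $h(2^m-1)$ for $m\le j$, which again lies in $[\delta,\pi-\delta]$ by \eqref{eq:h_rest} (excluding the trivial singleton, which has length $0$ and vacuously no U-turn), so $I'$ has no sub-U-turn property either. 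Therefore the procedure keeps doubling. At step $j=k_\ast-1$ (if $k_\ast\le k_{\mathrm{max}}$), the extension produces $I_{k_\ast}$ of length $h(2^{k_\ast}-1)\in(\pi+\delta,2\pi-\delta)$, which by the second implication of Lemma~\ref{lem:u-turn} \emph{does} have the U-turn property, so the procedure stops and selects $I_{k_\ast}$. (One must check the intermediate sub-U-turn test at this step is not triggered prematurely: the sub-orbits of the proposed extension $I'$ have lengths $h(2^m-1)$ with $m\le k_\ast-1 < k_\ast$, all in $[\delta,\pi-\delta]$, so no sub-U-turn is detected and the extension is accepted before the terminal U-turn check fires.) If instead $k_\ast>k_{\mathrm{max}}$, the doubling continues until the cap $|I_{j+1}|=2^{k_{\mathrm{max}}}$ is hit, and the procedure stops there; in both cases the final length is $2^{\min(k_\ast,k_{\mathrm{max}})}$.

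\textbf{Step 3: uniformity of the selected orbit.} Since the length $2^{\min(k_\ast,k_{\mathrm{max}})}$ is reached after $\min(k_\ast,k_{\mathrm{max}})$ doublings, each an independent fair coin flip between forward and backward extension, the set of intervals of that length containing $0$ is sampled uniformly: the orbit after $n$ doublings is $\{a,a+1,\dots,a+2^n-1\}$ where $a = -\sum_{\ell} \eps_\ell 2^{\ell-1}$ ranges over $\{-(2^n-1),\dots,0\}$ as the i.i.d.\ signs $\eps_\ell\in\{0,1\}$ vary, hitting each value exactly once. This yields $\mathcal O(x,v)=\Unif(\mathfrak I_0(\min(k_\ast,k_{\mathrm{max}})))$ on $E_{\alpha,r}$, which is \eqref{eq:OtuUnif}; comparing algorithms gives \eqref{eq:NUTStoMultHMC}.

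\textbf{Main obstacle.} The delicate part is Step 2 — specifically, handling the sub-U-turn test correctly at every iteration. The orbit selection procedure stops early if the \emph{proposed extension} $I'$ (before merging) has the sub-U-turn property, i.e., if any orbit in $\mathfrak I(I')$ has the U-turn property. I need to ensure that for all $j$ up to and including $k_\ast-1$, none of these sub-orbits (which have lengths $h(2^m-1)$ for $m$ ranging over $0,\dots,j$, counting also the halved pieces) triggers a spurious U-turn. This is exactly where condition \eqref{eq:h_rest} does its work: it forces every $h(2^m-1)$ with $m\ge 1$ to avoid the ambiguous neighborhoods of $0$ and $\pi$, so Lemma~\ref{lem:u-turn} gives a definite verdict (no U-turn) for each, and the singleton $m=0$ case is vacuous. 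Carefully organizing this induction — distinguishing the "keep doubling" phase, the precise stopping step, and the $k_\ast$ vs.\ $k_{\mathrm{max}}$ cap — is the crux; the rest is bookkeeping.
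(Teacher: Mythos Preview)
Your proposal is correct and follows essentially the same approach as the paper: use Lemma~\ref{lem:u-turn} together with condition~\eqref{eq:h_rest} to show that, on $E_{\alpha,r}$, the U-turn property of every orbit encountered during orbit selection depends only on its length, so the procedure doubles until length $2^{\min(k_\ast,k_{\mathrm{max}})}$ and then stops, with the fair forward/backward coins yielding the uniform law on $\mathfrak I_0(\min(k_\ast,k_{\mathrm{max}}))$. Your Step~2 is in fact more explicit than the paper's proof about the sub-U-turn checks on the extension $I'$ (the paper phrases this as ``already negatively tested in previous iterations'', which really means orbits \emph{of the same length} were tested), and your Step~3 spells out the bijection between the coin sequence and the left endpoint via binary expansion; both are useful clarifications but not departures from the paper's argument.
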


The assertion \eqref{eq:NUTStoMultHMC} follows immediately from \eqref{eq:OtuUnif} by definition of Multinoulli HMC.

\begin{proof}
Let $(x,v)\in\mathbb R^{2d}$.
The orbit distribution $\mathcal{O}(x,v)$ follows from the orbit selection procedure described in Section \ref{sec:NUTS}:
Starting from $I_0=\{0\}$, the procedure iteratively draws an extension $I'$ uniformly from $\{I_k-|I_k|,I_k+|I_k|\}$ and continues until one of the following conditions is met: 
\begin{itemize}
\item The extension $I'$ has a sub-U-turn property. \item The extended orbit $I_{k+1}=I_k\cup I'$ has the U-turn property, or $|I_{k+1}|=2^{k_{\mathrm{max}}}$.  
\end{itemize} If the extension $I'$ has the sub-U-turn property, then $I_k$ is selected as the final orbit.  Alternatively, if the extended index orbit $I_{k+1}$  has the U-turn property or reaches the maximal length $2^{k_{\mathrm{max}}}$, then $I_{k+1}$ is selected as the final orbit.

In particular, if the U-turn property depends only on the orbit length, then the extension $I'$ will not have the sub-U-turn property since none of the index orbits derived from the extensions $I'$ by repeated halving will have the U-turn property, as these orbit have already been negatively tested for the U-turn property in previous iterations.  Therefore, the procedure selects the first extension $I_{k+1}$ of length $2^{k_\ast}$ that satisfies the U-turn property or, alternatively, selects the orbit where $|I_{k+1}|=2^{k_{\mathrm{max}}}$.  Since the extensions are performed in both directions with equal probability, this results in a uniform distribution over the set $\mathfrak I_0(\min(k_\ast,k_{\mathrm{max}}))$, which consists of index orbits of length $2^{\min(k_\ast,k_{\mathrm{max}})}$ that contain $0$, i.e.,
\[ \mathcal{O}(x,v)\ =\ \Unif\bigr(\mathfrak I_0(\min(k_\ast,k_{\mathrm{max}}))\bigr)\;. \]
It therefore suffices to show that the U-turn property depends only on orbit length for step sizes that satisfy condition \eqref{eq:h_rest}.

Let $0\leq\alpha,r\leq d$, $x\in D_\alpha$, and assume $v\sim\gamma$ to be in $E_{\alpha,r}$.  According to Lemma \ref{lem:u-turn}, the U-turn property depends only on orbit length for orbits whose length falls within the specified intervals. Condition \eqref{eq:h_rest} on the step size ensures that the procedure iterates through orbits  until the first orbit of length $2^{k_\ast}$ is found such that
\[ h(2^{k_\ast}-1)\ \in\ (\pi+\delta,2\pi-\delta) \]
at which point the orbit has the U-turn property, as established by the second implication of Lemma \ref{lem:u-turn}. This process is illustrated by the red dots in Figure~\ref{fig:sin}.  If no such orbit is encountered after $k_{\mathrm{max}}$ doublings, the procedure selects an orbit of length $2^{k_{\mathrm{max}}}$.
\end{proof}

\subsection{Index selection}

In the previous section, we showed that NUTS simplifies to Multinoulli HMC in the shell with high probability.  While the orbit selection in Multinoulli HMC is  independent of the state, the Boltzmann-weighted index selection depends on the leapfrog energy errors along the selected leapfrog path.  However, these leapfrog energy errors can be upper bounded uniformly within this shell.  This uniformity in the energy error motivates writing Multinoulli HMC as an accept/reject chain in the sense of Section \ref{sec:ARchains} where the acceptance chain corresponds to Uniform HMC, which is characterized by uniform orbit and index selection.  Given $x \in \mathbb{R}^d$ and $k \in \mathbb{N}$, a transition step of Uniform HMC can be described as:

\begin{algorithm}
\caption*{\textbf{Uniform HMC:} $X\sim\pi_{\mathrm{UnifHMC}}(x,\cdot)$}
1. Velocity refreshment:  $v\sim\gamma$.\\
2. Orbit selection:  $I\sim\Unif(\mathfrak I_0(k))$.\\
3. Index selection:  $L\sim\Unif(I)$. \\
4. Output: $X=\Pi(\Phi_h^L(x,v))$.
\label{alg:UnifHMC}
\end{algorithm}

\medskip

\begin{lemma}\label{lem:MultHMCtoUnifHMC}
Let $U\sim\Unif([0,1])$.  Given $(x,v) \in \mathbb{R}^{2d}$ and an index orbit $I \subset \mathbb{Z}$,  define the event
\begin{equation*}
    A_{\mathrm{index}}(x)\ =\ \Bigr\{U\ \leq\ |I|\min_{i\in I}e^{-(H\circ\Phi_h^i-H)(x,v)}\Bigr(\sum_{i\in I}e^{-(H\circ\Phi_h^i-H)(x,v)}\Bigr)^{-1}\Bigr\}\;.
\end{equation*}
Then, Multinoulli HMC can be written as
\[ \pi_{\mathrm{MultHMC}}(x,\cdot)\,\ind_{A_{\mathrm{index}}(x)}\ =\ \pi_{\mathrm{UnifHMC}}(x,\cdot)\,\ind_{A_{\mathrm{index}}(x)}\;. \]
\end{lemma}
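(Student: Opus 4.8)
The plan is to realise the Multinoulli index draw through the mixture decomposition that underlies rejection sampling, using the auxiliary uniform $U$ as the accept/reject coin. Fix $(x,v)\in\mathbb R^{2d}$ and an index orbit $I$, and abbreviate $w_i=e^{-(H\circ\Phi_h^i-H)(x,v)}$ for $i\in I$, $w_\star=\min_{i\in I}w_i>0$, and $W=\sum_{i\in I}w_i$. Then the threshold appearing in the definition of $A_{\mathrm{index}}(x)$ is exactly $p:=|I|\,w_\star/W$, and since $w_\star\leq W/|I|$ we have $p\in(0,1]$, with $p=1$ precisely when all the weights $w_i$ coincide.

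The first step is the pointwise identity, valid for every $j\in I$,
\[
    \frac{w_j}{W}\ =\ p\cdot\frac1{|I|}\ +\ (1-p)\,q_j\;,\qquad q_j\ :=\ \frac{w_j-w_\star}{W-|I|\,w_\star}\;,
\]
where the last summand is read as $0$ when $p=1$. Since $w_j\geq w_\star$ and $\sum_{j\in I}(w_j-w_\star)=W-|I|\,w_\star$, the vector $(q_j)_{j\in I}$ is a probability vector on $I$, so this exhibits $\Mult(w_i)_{i\in I}$ as the mixture $p\,\Unif(I)+(1-p)\,\Mult(q_i)_{i\in I}$; a direct summation over $j$ verifies the coefficients.

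With this in hand, I would construct the coupling on a common space carrying $v\sim\gamma$, $I\sim\Unif(\mathfrak I_0(k))$, $U\sim\Unif([0,1])$, and an independent source of extra randomness, all mutually independent, and define the index by: draw $L\sim\Unif(I)$ if $U\leq p$ (that is, on $A_{\mathrm{index}}(x)$), and $L\sim\Mult(q_i)_{i\in I}$ if $U>p$. Conditioning on $(v,I)$ and integrating out $U$, the mixture identity gives $L\sim\Mult(w_i)_{i\in I}$, so $X=\Pi(\Phi_h^L(x,v))$ has law $\pi_{\mathrm{MultHMC}}(x,\cdot)$; writing $\Phi^{\A}$ for the Uniform HMC transition built from the \emph{same} $(v,I,U)$ and $\Phi^{\R}$ for the transition built on the event $U>p$, this is precisely a representation of the form \eqref{eq:X}. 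On the event $A_{\mathrm{index}}(x)$ both chains use the identical $v$, $I$, and uniformly drawn $L$, hence output the identical $X$, which is the asserted identity $\pi_{\mathrm{MultHMC}}(x,\cdot)\,\ind_{A_{\mathrm{index}}(x)}=\pi_{\mathrm{UnifHMC}}(x,\cdot)\,\ind_{A_{\mathrm{index}}(x)}$.

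The only points needing care — the closest thing to an obstacle — are bookkeeping: checking product-measurability of the maps $\Phi^{\A},\Phi^{\R}$ so that the representation genuinely falls under \eqref{eq:X} (immediate, since $w_i$, $p$, $q_i$ depend continuously on $(x,v)$ and $\mathfrak I_0(k)$ is finite), and the degenerate case $p=1$, in which $A_{\mathrm{index}}(x)$ has full probability, $\Mult(w_i)_{i\in I}=\Unif(I)$ outright, and the claim is trivial. Notably, no concentration input or step-size restriction is used here; those enter only later, when $\mathbb P(A_{\mathrm{index}}(x))$ is bounded below uniformly over $x\in D_\alpha$ via the energy-error estimate \eqref{eq:deltaH}.
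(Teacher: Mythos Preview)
Your proof is correct and follows the same approach as the paper: both split $\Mult(w_i)_{i\in I}$ into its maximal uniform component $p\,\Unif(I)$ plus a residual Multinoulli on the shifted weights $w_i-w_\star$, using $U$ to select between them so that the index selection is uniform on $A_{\mathrm{index}}(x)$. The paper states the decomposition more tersely (with an accompanying figure) while you spell out the mixture identity and the edge case $p=1$ explicitly, but there is no substantive difference.
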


\begin{proof}
Given $(x,v) \in \mathbb{R}^{2d}$ and an index orbit $I \subset \mathbb{Z}$,  the index $L$ in Multinoulli HMC can be sampled as follows (see Figure \ref{fig:Multinoulli_splitting}) : First, independently draw
\[ L_{\A}\sim\Unif(I)\quad\text{and}\quad L_{\R}\sim\Mult\bigr(e^{-(H\circ\Phi_h^i-H)(x,v)}-\min\nolimits_{i\in I}e^{-(H\circ\Phi_h^i-H)(x,v)}\bigr)_{i\in I} \]
then select
\[ L\ =\ L_{\A}\,\ind_{A_{\mathrm{index}}(x)}+L_{\R}\,\ind_{A_{\mathrm{index}}(x)^c}\;. \]

\begin{figure}[t]
\centering
\begin{tikzpicture}[scale=1]
\clip (-2,-1) rectangle (8,5);

\fill[gray!50] (1,0) rectangle (1.5,3);
\fill[purple!50] (1,3) rectangle (1.5,4);
\fill[gray!50] (2,0) rectangle (2.5,3);
\fill[purple!50] (2,3) rectangle (2.5,3.7);
\fill[gray!50] (3,0) rectangle (3.5,3);
\fill[purple!50] (3,3) rectangle (3.5,3.5);
\fill[gray!50] (4,0) rectangle (4.5,3);
\fill[purple] (4,3) rectangle (4.5,3);
\fill[gray!50] (5,0) rectangle (5.5,3);
\fill[purple!50] (5,3) rectangle (5.5,3.5);
\fill[gray!50] (6,0) rectangle (6.5,3);
\fill[purple!50] (6,3) rectangle (6.5,3.8);

\draw[black, line width=1pt, ->] (0,0) -- (0,4.5);
\draw[black, line width=1pt] (-0.05,0) -- (0,0) node[anchor=east, pos=0]{$0$};
\draw[black, line width=1pt] (-0.05,4) -- (0,4) node[anchor=east, pos=0]{$a_{i_1}$};
\draw[black, line width=1pt] (-0.05,3.7) -- (0,3.7) node[anchor=east, pos=0]{$a_{i_2}$};
\draw[black, line width=1pt] (-0.05,3) -- (0,3) node[anchor=east, pos=0]{$\min_{i\in I}a_i$};
\draw[black, line width=1pt, dashed] (0,4) -- (1,4);
\draw[black, line width=1pt, dashed] (0,3.7) -- (2,3.7);
\draw[black, line width=2pt] (0,3) -- (6.5,3);

\draw[black, line width=1pt] (1,0) rectangle (1.5,4) node[anchor=north, pos=0]{$i_1$};
\draw[black, line width=1pt] (2,0) rectangle (2.5,3.7) node[anchor=north, pos=0]{$i_2$};
\draw[black, line width=1pt] (3,0) rectangle (3.5,3.5) node[anchor=north, pos=0]{$i_3$};
\draw[black, line width=1pt] (4,0) rectangle (4.5,3) node[anchor=north, pos=0]{$i_4$};
\draw[black, line width=1pt] (5,0) rectangle (5.5,3.5) node[anchor=north, pos=0]{$i_5$};
\draw[black, line width=1pt] (6,0) rectangle (6.5,3.8) node[anchor=north, pos=0]{$i_6$};

\end{tikzpicture}
\caption{\textit{For an index set $I$ with $|I|=6$, the Multinoulli distribution $\Mult(a_i)_{i\in I}$ with normalized weights $\sum_{i\in I}a_i=1$ is split into its maximal uniform part $|I|\min_{i\in I}a_i\,\Unif(I)$ (shown in gray) and the remaining part $(1-|I|\min_{i\in I}a_i)\,\Mult(a_i-\min_{i\in I}a_i)_{i\in I}$ (shown in red).}}
\label{fig:Multinoulli_splitting}
\end{figure}
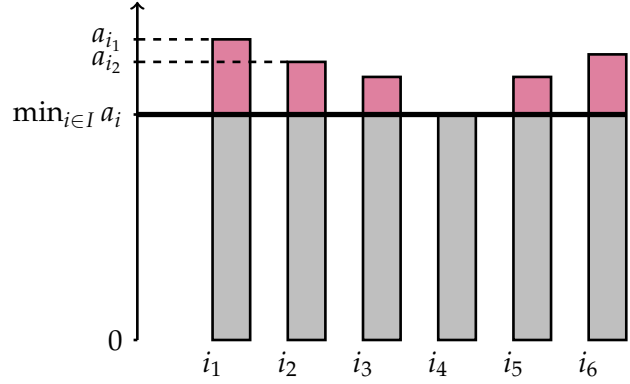

In the event $A_{\mathrm{index}}(x)$, the index selection becomes uniform by construction, and Multinoulli HMC corresponds to Uniform HMC.
\end{proof}

Note that Uniform HMC features fully state independent orbit and index selection, which makes it more straightforward to analyze.  By combining the reduction of NUTS to Multinoulli HMC from Lemma \ref{prop:u-turn} with the previous lemma, we can write NUTS as an accept/reject chain where the acceptance chain corresponds to Uniform HMC.  This shows that NUTS fits the framework of Theorem \ref{thm:ARmix}, and therefore,  the mixing of NUTS in the shell reduces to the mixing of Uniform HMC.  

\medskip

\begin{lemma}\label{prop:NUTStoUnifHMC}
Let $0\leq\alpha,r\leq d$, $\delta$ as defined in \eqref{eq:u-turn_delta} and $0<h\leq1$ such that \eqref{eq:h_rest} holds.
Let $x\in D_\alpha$ and $v\sim\gamma$.
Then, in the event $E_{\alpha,r}\cap A_{\mathrm{index}}(x)$, NUTS simplifies to Uniform HMC with fixed index orbit length of $2^{\min(k_\ast,k_{\mathrm{max}})}$, i.e., 
\begin{equation}\label{eq:NUTStoUnifHMC}
    \pi_{\NUTS}(x,\cdot)\,\ind_{E_{\alpha,r}\cap A_{\mathrm{index}}(x)}\ =\ \pi_{\mathrm{UnifHMC}}(x,\cdot)\,\ind_{E_{\alpha,r}\cap A_{\mathrm{index}}(x)}\;,
\end{equation}
where $E_{\alpha,r}$ is defined in Lemma \ref{lem:E}, $A_{\mathrm{index}}(x)$ is defined in Lemma \ref{lem:MultHMCtoUnifHMC}, and $k_\ast$ is given in Lemma \ref{prop:u-turn}.  Furthermore, the rejection probability is bounded by:
\begin{equation}\label{eq:rej_prob}
    \mathbb P\bigr((E_{\alpha,r}\cap A_{\mathrm{index}}(x))^c\bigr)\ \leq\ 4\,e^{-r^2/8d}+h^2\max(\alpha,r)+\frac14h^4d\;.
\end{equation}
\end{lemma}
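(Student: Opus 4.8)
\textbf{Proof proposal for Lemma \ref{prop:NUTStoUnifHMC}.}
The plan is to chain together the two preceding lemmas and then control the probability of the ``bad'' complementary event. For the identity \eqref{eq:NUTStoUnifHMC}, observe that on the event $E_{\alpha,r}$, Lemma \ref{prop:u-turn} gives $\pi_{\NUTS}(x,\cdot)\,\ind_{E_{\alpha,r}} = \pi_{\mathrm{MultHMC}}(x,\cdot)\,\ind_{E_{\alpha,r}}$ with index orbit length $2^{\min(k_\ast,k_{\mathrm{max}})}$, where the step-size restriction \eqref{eq:h_rest} is exactly what makes the orbit selection state-independent and deterministic in length. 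Intersecting further with $A_{\mathrm{index}}(x)$ and invoking Lemma \ref{lem:MultHMCtoUnifHMC}, which states $\pi_{\mathrm{MultHMC}}(x,\cdot)\,\ind_{A_{\mathrm{index}}(x)} = \pi_{\mathrm{UnifHMC}}(x,\cdot)\,\ind_{A_{\mathrm{index}}(x)}$, yields the claimed identity on $E_{\alpha,r}\cap A_{\mathrm{index}}(x)$. One subtlety to be careful about: $A_{\mathrm{index}}(x)$ is defined in terms of the sampled index orbit $I$, so I should make sure that on $E_{\alpha,r}$ the orbit $I$ is the uniform one of length $2^{\min(k_\ast,k_{\mathrm{max}})}$ before applying the index-selection splitting, i.e. the two reductions are applied in the correct order.

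For the rejection bound \eqref{eq:rej_prob}, I would union bound: $\mathbb P((E_{\alpha,r}\cap A_{\mathrm{index}}(x))^c) \le \mathbb P(E_{\alpha,r}^c) + \mathbb P(A_{\mathrm{index}}(x)^c \cap E_{\alpha,r})$. The first term is $\le 4e^{-r^2/8d}$ directly from Lemma \ref{lem:E}. The second term requires a uniform upper bound on the leapfrog energy errors within the shell. Here I would use \eqref{eq:deltaH}: on $E_{\alpha,r}$ (and using that the whole leapfrog orbit stays in a comparable shell), $|(H\circ\Phi_h^i - H)(x,v)| \le \tfrac14 h^2 \max(\alpha,r) + \cdots$ — more precisely, I expect the bound $|(H\circ\Phi_h^i-H)(x,v)| \le \tfrac14 h^2(\max(\alpha,r) + \tfrac14 h^2 d)$ or something of that shape, coming from $\tfrac18 h^2||\Pi(\Phi_h^i(x,v))|^2 - |x|^2|$ together with control on how far $|\Pi(\Phi_h^i(x,v))|^2$ can drift from $|x|^2$ inside the shell. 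Let me denote this uniform bound by $\eta$. Then $|I|\min_i e^{-(H\circ\Phi_h^i-H)}/\sum_i e^{-(H\circ\Phi_h^i-H)} \ge e^{-2\eta} \ge 1 - 2\eta$, so $\mathbb P(A_{\mathrm{index}}(x)^c) = 1 - \mathbb P(U \le |I|\min_i e^{-\cdots}/\sum_i e^{-\cdots}) \le 2\eta$, and with $\eta$ as above this gives the $h^2\max(\alpha,r) + \tfrac14 h^4 d$ contribution.

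The main obstacle I anticipate is getting the constants in the energy-error bound exactly right so that the final numbers match \eqref{eq:rej_prob}. This hinges on two things: (a) the precise form of the modified-Hamiltonian identity $|(H\circ\Phi_h^i-H)(x,v)| = \tfrac18 h^2 ||\Pi(\Phi_h^i(x,v))|^2 - |x|^2|$ from \eqref{eq:deltaH}, and (b) a clean bound showing that $||\Pi(\Phi_h^i(x,v))|^2 - |x|^2| \le 2\max(\alpha,r) + \tfrac12 h^2 d$ (or similar) on $E_{\alpha,r}$, which should follow from the explicit leapfrog flow formula \eqref{eq:GaussianLF}: writing $|\Pi(\Phi_h^t(x,v))|^2$ in terms of $|x|^2$, $|v|^2$, $x\cdot v$, and trigonometric coefficients, the deviation from $|x|^2$ is controlled by $||x|^2-d|\le\alpha$, $||v|^2-d|\le r$, $|x\cdot v|\le r$, and the $O(h^2)$ ellipticity corrections. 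I would carry out this computation carefully, using $1 - e^{-2\eta} \le 2\eta$ for $\eta \ge 0$ and the fact that the factor of $|I|$ against $\min_i$ versus the sum cannot hurt (since the uniform weight $1/|I|$ lies between $e^{-\eta}/(|I|e^{\eta})$-type bounds), to land on the stated right-hand side. The rest is bookkeeping: collecting $4e^{-r^2/8d}$ from the concentration event and $2\eta = h^2\max(\alpha,r) + \tfrac14 h^4 d$ from the index-acceptance event.
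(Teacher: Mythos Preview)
Your proposal is correct and follows essentially the same route as the paper's proof: the identity \eqref{eq:NUTStoUnifHMC} is obtained by chaining Lemmas~\ref{prop:u-turn} and~\ref{lem:MultHMCtoUnifHMC}, and the rejection bound \eqref{eq:rej_prob} is obtained via the decomposition $\mathbb P(E_{\alpha,r}^c)+\mathbb P(E_{\alpha,r}\cap A_{\mathrm{index}}(x)^c)$, bounding the second term by $2\sup_i|H\circ\Phi_h^i-H|(x,v)$ through the inequality $1-e^{-2\eta}\le 2\eta$ and then controlling the energy error via \eqref{eq:deltaH} and the explicit leapfrog formula \eqref{eq:GaussianLF}. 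The only cosmetic difference is that the paper bounds $\bigl||\Pi(\Phi_h^i(x,v))|^2-d\bigr|$ first (see \eqref{eq:Phi2-d}) and then adds $\alpha$ via the triangle inequality, arriving at $\eta=\tfrac12 h^2\max(\alpha,r)+\tfrac18 h^4 d$, which is exactly the constant you anticipated.
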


\begin{proof}
The assertion in \eqref{eq:NUTStoUnifHMC}  follows directly from Lemma \ref{prop:u-turn} and Lemma  \ref{lem:MultHMCtoUnifHMC}, using transitivity.  For \eqref{eq:rej_prob}, observe that
\begin{equation}\label{eq:rej_prob_sum}
    \mathbb P\bigr((E_{\alpha,r}\cap A_{\mathrm{index}}(x))^c\bigr)\ =\ \mathbb P(E_{\alpha,r}^c)+\mathbb P(E_{\alpha,r}\cap A_{\mathrm{index}}(x)^c)\;.
\end{equation}
Let $0\leq\alpha,r\leq d$, $x\in D_\alpha$ and $v\sim\gamma$.
By Lemma \ref{lem:E}, the first term in \eqref{eq:rej_prob_sum} is bounded by $4\,e^{-r^2/8d}$.

For the second term, we use the definition of $A_{\mathrm{index}}(x)$, which gives
\begin{align*}
    \mathbb P\bigr(A_{\mathrm{index}}(x)^c|v,I\bigr)
\ &=\ 1-|I|\min\nolimits_{i\in I}e^{-(H\circ\Phi_h^i-H)(x,v)}\Bigr(\sum_{i\in I}e^{-(H\circ\Phi_h^i-H)(x,v)}\Bigr)^{-1} \\
&\leq\ 1-e^{-2\sup\nolimits_{i\in I}|H\circ\Phi_h^i-H|(x,v)}
\ \leq\ 2\sup\nolimits_{i\in\mathbb Z}\bigr|H\circ\Phi_h^i-H\bigr|(x,v) \;.
\end{align*}
Inserting this bound into the second term of \eqref{eq:rej_prob_sum} gives
\begin{equation}  \label{eq:rej_prob_summand_2}
\mathbb P\bigr(E_{\alpha,r}\cap A_{\mathrm{index}}(x)^c\bigr)\ =\ \mathbb E\bigr(\mathbb P\bigr(A_{\mathrm{index}}(x)^c|v,I\bigr)\,\ind_{E_{\alpha,r}}\bigr)\ \leq\ 2\,\mathbb E\bigr(\sup\nolimits_{i\in\mathbb Z}\bigr|H\circ\Phi_h^i-H\bigr|(x,v)\,\ind_{E_{\alpha,r}}\bigr)\;. \end{equation}
From \eqref{eq:deltaH}, the definition of $D_{\alpha}$ in \eqref{eq:D_alpha}, and the triangle inequality, we know the energy error satisfes:
\[ \bigr|H\circ\Phi_h^i-H\bigr|(x,v)
\ =\ \frac{h^2}{8}\bigr||\Pi(\Phi_h^i(x,v))|^2-|x|^2\bigr|
\ \leq\ \frac{h^2}{8}\Bigr(\bigr||\Pi(\Phi_h^i(x,v))|^2-d\bigr|+\alpha\Bigr)\;. \]
Now, inserting the explicit leapfrog flow from \eqref{eq:GaussianLF} and using the estimates \[ \max\bigr(||v|^2-d|,\,\sup\nolimits_{x\in D_\alpha}|x\cdot v|\bigr)\ \leq\ r
\] which are valid in $E_{\alpha,r}$, and applying the elementary bound \[
1+\mathsf{a}^2/3 \ge (1-\mathsf{a}^2/4)^{-1} \quad \text{for $\mathsf{a} \in (0,1]$} \;,
\] we obtain
\begin{align}\label{eq:Phi2-d}
	\bigr||\Pi(\Phi_h^{t/h}(x,v))|^2-d\bigr|\ &\leq\
	\begin{aligned}[t]
		&\cos^2(\beta_ht)\,\bigr||x|^2-d\bigr|+\sin^2(\beta_ht)(1-h^2/4)^{-1}\bigr||v|^2-d\bigr|\\
		&+\sin^2(\beta_ht)\bigr((1-h^2/4)^{-1}-1\bigr)d+|\sin(2\beta_ht)|(1-h^2/4)^{-1/2}|x\cdot v|
    \end{aligned}\nonumber\\
	&\leq\ \cos^2(\beta_ht)\alpha+\sin^2(\beta_ht)(1+h^2/3)r+h^2d/3+(1+h^2/3)r \nonumber\\
	&\leq\ \max(\alpha,r)+r+h^2d\;.
\end{align}
Therefore, uniformly in $i \in \mathbb{Z}$,
\[ \bigr|H\circ\Phi_h^i-H\bigr|(x,v)\ \leq\ \frac12h^2\max(\alpha,r)+\frac18h^4d \;. \]
Inserting this bound into \eqref{eq:rej_prob_summand_2}  completes the proof.
\end{proof}

\subsection{Stability of locally adaptive HMC}

In order to localize to the spherical geometry of $\gamma$ in the framework of Theorem \ref{thm:ARmix}, we now explore how NUTS and more generally locally adaptive HMC kernels of the form \eqref{eq:HMCkernel} are stable in spherical shells $D_\alpha$.  Let $T_\alpha$ be the first exit time for the chain starting in $D_\alpha$, i.e.,
\[ T_\alpha\ =\ \min\{k\geq0\,:\,X_k\notin D_\alpha\}\;. \]

For a transition from $x\in D_\alpha$, it holds in $E_{\alpha,r}$ defined in Lemma \ref{lem:E} by \eqref{eq:Phi2-d} that
\begin{equation*}\label{eq:Phi2-d_2}
    \bigr||\Pi(\Phi_h^{t/h}(x,v))|^2-d\bigr|\ \leq\ \max(\alpha,r)+r+h^2d\;.
\end{equation*}
Therefore,
\begin{equation}\label{eq:LFsubset}
    \Pi(\Phi_h^{T/h}(D_\alpha,v)) \ \subseteq \ D_{\max(\alpha,r)+r+h^2d}\;,
\end{equation}
which immediately implies stability in growing spherical shells.

\medskip

\begin{lemma}\label{lem:stab_leapfrog}
	Let $\pi$ be of the form \eqref{eq:HMCkernel} with invariant distribution $\gamma$.
	Let $\alpha_0,r\geq0$.
	For any starting point $x_0\in D_{\alpha_0}$, the exit probability of $X_k\sim\pi^k(x_0,\cdot)$ from $D_{\max(\alpha_0,r)+n(r+h^2d)}$ over $n$ transitions such that $\max(\alpha_0,r)+(n-1)(r+h^2d)\leq d$ satisfies
		\[ \mathbb P_{x_0}(T_{\max(\alpha_0,r)+n(r+h^2d)}\leq n)\ \leq\ 4n\,e^{-r^2/8d}\;. \]
\end{lemma}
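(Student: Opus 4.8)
The plan is to iterate the one-step inclusion \eqref{eq:LFsubset} together with the concentration estimate of Lemma~\ref{lem:E}, following a nested family of shells whose radii grow linearly in the number of steps. Set $\beta_j=\max(\alpha_0,r)+j(r+h^2d)$ for $j=0,1,\dots,n$, so that $x_0\in D_{\alpha_0}\subseteq D_{\beta_0}$ and $D_{\beta_0}\subseteq D_{\beta_1}\subseteq\cdots\subseteq D_{\beta_n}$, with terminal radius $\beta_n=\max(\alpha_0,r)+n(r+h^2d)$. The hypothesis $\max(\alpha_0,r)+(n-1)(r+h^2d)\leq d$ is exactly $\beta_{n-1}\leq d$; since it suffices to treat $n\geq1$ (the case $n=0$ being trivial), this also gives $r\leq\max(\alpha_0,r)\leq\beta_{n-1}\leq d$. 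Hence each of $\beta_0,\dots,\beta_{n-1}$ and $r$ lies in $[0,d]$, which is the regime in which Lemma~\ref{lem:E} and the estimate \eqref{eq:Phi2-d} (and thus \eqref{eq:LFsubset}) apply with $\alpha=\beta_{j-1}$.

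For the $j$-th transition step, $j=1,\dots,n$, let $v_j\sim\gamma$ denote the velocity sampled at that step and let $G_j=E_{\beta_{j-1},r}$ be the associated event of Lemma~\ref{lem:E}. Since $G_j$ is a function of $v_j$ alone, which is drawn afresh and independently of the past, Lemma~\ref{lem:E} gives $\mathbb P(G_j^c)\leq4\,e^{-r^2/8d}$. I would then argue by induction on $j$ that, on the event $\bigcap_{i=1}^{j}G_i$, one has $X_j\in D_{\beta_j}$: the base case $j=0$ is just $x_0\in D_{\beta_0}$, and for the inductive step, if $X_{j-1}\in D_{\beta_{j-1}}$ and $G_j$ holds then $v_j\in E_{\beta_{j-1},r}$, so \eqref{eq:LFsubset}---equivalently the bound \eqref{eq:Phi2-d}, which is uniform over $t/h\in\mathbb Z$ and therefore also governs the locally adapted random path length---yields $X_j=\Pi(\Phi_h^{T/h}(X_{j-1},v_j))\in D_{\max(\beta_{j-1},r)+r+h^2d}=D_{\beta_j}$, the last equality using $\beta_{j-1}\geq\max(\alpha_0,r)\geq r$.

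Consequently, on $\bigcap_{j=1}^{n}G_j$ we have $X_k\in D_{\beta_k}\subseteq D_{\beta_n}$ for every $k=0,1,\dots,n$, hence $T_{\beta_n}>n$. Taking complements and applying a union bound gives
\[ \mathbb P_{x_0}(T_{\beta_n}\leq n)\ \leq\ \sum_{j=1}^{n}\mathbb P(G_j^c)\ \leq\ 4n\,e^{-r^2/8d}\;, \]
which is the asserted estimate since $\beta_n=\max(\alpha_0,r)+n(r+h^2d)$.

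I do not anticipate a genuine obstacle: the substance is already contained in \eqref{eq:Phi2-d}/\eqref{eq:LFsubset}, and what remains is careful bookkeeping. The points requiring attention are (a) checking that every shell $D_{\beta_{j-1}}$ from which the one-step inclusion is invoked has radius at most $d$, which is exactly what the constraint on $n$ secures; (b) ensuring $E_{\beta_{j-1},r}$ is evaluated at the correct, freshly drawn velocity and that its control is uniform over all $x\in D_{\beta_{j-1}}$, so it covers the random iterate $X_{j-1}$; and (c) observing that \eqref{eq:Phi2-d} bounds $\big|\,|\Pi(\Phi_h^{t/h}(x,v))|^2-d\big|$ for all integer $t/h$, so the conclusion is insensitive to the law $\tau_{x,v}$ of the path length.
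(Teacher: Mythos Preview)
Your argument is correct and is exactly the iteration of \eqref{eq:LFsubset} with Lemma~\ref{lem:E} that the paper has in mind; the paper in fact omits an explicit proof, writing only that \eqref{eq:LFsubset} ``immediately implies stability in growing spherical shells,'' and your write-up is the natural way to make this precise. Your bookkeeping on the range $\beta_{j-1}\leq d$ and on the uniformity of $E_{\beta_{j-1},r}$ over $x\in D_{\beta_{j-1}}$ is just what is needed.
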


\begin{remark}[Improved Stability of NUTS]\label{rem:stab_fixed}
The bound of Lemma \ref{lem:stab_leapfrog} is based on the maximum expansion \eqref{eq:LFsubset} of one transition.  Alternatively, switching to the exact Hamiltonian flow for simplicity, the central limit theorem gives for $x\in D_\alpha$
\begin{align*}
    |\Pi(\phi_t(x,v))|^2-|x|^2\ &=\ -\sin^2t\,(|x|^2-|v|^2)+\sin(2t)x\cdot v\\
    &\approx\ -\sin^2t\,(|x|^2-d)+\sqrt{\sin^4t+\sin^2(2t)}\,d^{1/2}Z
\end{align*}
with $Z \sim \mathcal{N}(0,1)$ standard normally distributed.
The evolution of $|X_k|^2$ for the corresponding chain is hence governed by a drift towards $d$ together with additive noise, which sparks hope for stability to hold in a fixed spherical shell.
This could be used to improve the assertion of Theorem \ref{thm:NUTSmixing} by logarithmic factors.
However, this evolution depends on the path length which is non-trivial for NUTS.
Therefore, we are satisfied with Lemma \ref{lem:stab_leapfrog} and leave the study of this slight improvement to future work.
\end{remark}

\section{Mixing of Uniform HMC}\label{sec:UnifHMC}

In the last section, we have seen that NUTS can be interpreted as an accept/reject chain in the sense of Section \ref{sec:ARchains} with $\pi_{\A}$ corresponding to Uniform HMC and the accept event $A(x)=E_{\alpha,r}\cap A_{\mathrm{index}}(x)$.  In order to employ the framework given in Theorem \ref{thm:ARmix}, we require insight into the mixing of Uniform HMC, which we now explore via couplings.

The path length distribution $\mathrm{Law}(hL)$ in physical time of Uniform HMC (as defined on page \pageref{alg:UnifHMC}) is a distribution over the collection  $h \cup_{I \in \mathfrak{I}_0(k)} I$.  This distribution is not  uniform over this collection.  Instead, the probability mass function (PMF)  follows a triangular pattern, given explicitly by
\begin{equation} \label{eq:tau_uniformHMC}
\tau_k(\{t\})\ =\ \frac{2^k-|t/h|}{(2^k)^2}\quad\text{for $t\in h\,\mathbb Z$ such that $|t|\leq h(2^k-1)$.} \end{equation}
The numerator measures the overlap of the index orbits in $\mathfrak I_0(k)$ at the index $t/h\in\mathbb Z$, while the denominator normalizes by the total number of indices across the $2^k$ index orbits, each containing $2^k$ indices.  As a result, the PMF forms a triangular shape with width proportional to the maximal integration time, as illustrated in Figure~\ref{fig:tri}.  With the auxiliary variables $v\sim\gamma$ and $T\sim\tau_k$, a transition $X\sim\pi_{\mathrm{UnifHMC}}(x,\cdot)$ can hence be written as $X=\Pi(\Phi_h^{T/h}(x,v))$.

\begin{figure}[t]
\centering
\begin{tikzpicture}
        \def\k{5} 
        \def\h{0.11} 
        \def\pii{3.14} 
        \def\piii{3.14159} 
     
    \begin{axis}[
        width=12cm, height=6cm, 
        xlabel={$t$ (physical time)},
        ylabel={$\tau_k(\{t\})$ (PMF)},
        grid=none,
        xtick = {-\piii, 0, \piii},
        xticklabels={$-\pi$, $0$, $\pi$},
        xtick distance=1,
        ytick distance=3,
        ymin=0,ymax=0.04,
        xmin=-3.34, xmax=3.34,
        yticklabels={},
        axis x line=bottom,
        axis y line=none,
        cycle list name=color list]

        \addplot [
            thick, 
            gray, 
            domain=-\pii:\pii, 
            samples=2^\k-1, 
            ycomb, 
            mark=*] 
        {max(0, (2^\k - abs(x/\h))/(2^\k)^2)};
        
        \addlegendentry{$\tau_k(\{t\})$  (PMF)}
    \end{axis}
\end{tikzpicture}
\caption{\textit{The PMF of the triangular distribution in \eqref{eq:tau_uniformHMC} with $k=5$ and $h=0.11$, as in Figure~\ref{fig:orbitfig} (c).}} 
\label{fig:tri}
\end{figure}
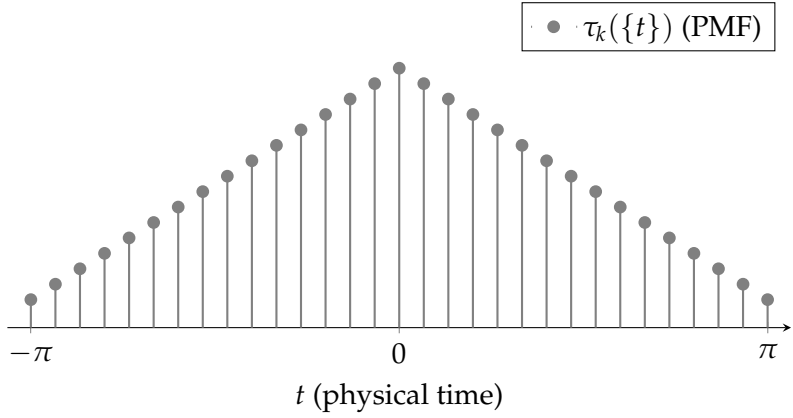

The crux of Uniform HMC for our purpose of analysis via couplings is that neither of the auxiliary variables depend on $x$.  To fully appreciate this significance, let us recapitulate the idea behind the coupling approach: We deduce conclusions from observing two suitably related -- coupled -- copies of the stochastic process.  The coupling specifically relates the randomness involved in the transitions. Since the leapfrog flow is deterministic, in Uniform HMC this corresponds to coupling the auxiliary variables.  Due to the fact that the copies of the process being coupled typically are in distinct states, this is greatly simplified by the auxiliary variables being state independent.

The following two lemmas verify Assumptions (i) and (ii) of Theorem \ref{thm:ARmix} for general locally adaptive HMC kernels of the form \eqref{eq:HMCkernel}.

\medskip

\begin{lemma}\label{lem:contr}
    Let $\pi$ be a kernel of the form \eqref{eq:HMCkernel} with invariant distribution $\gamma$.
	Then, it holds for all $x,\tilde x\in\mathbb R^d$
	\[ \mathcal W^1_{|\cdot|}\bigr(\pi(x,\cdot),\,\pi(\tilde x,\cdot)\bigr)\ \leq\ \int|\cos(\beta_h t)|\tau(dt)\,|x-\tilde x| \;. \]
\end{lemma}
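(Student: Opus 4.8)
The plan is to bound the Wasserstein distance by the expected distance under a single, explicitly constructed coupling, namely the \emph{synchronous} coupling of the auxiliary variables. Concretely, I would draw one velocity $v\sim\gamma$ and one path length $T\sim\tau$, and then set $X=\Pi(\Phi_h^{T/h}(x,v))$ and $\tilde X=\Pi(\Phi_h^{T/h}(\tilde x,v))$ using the \emph{same} $v$ and $T$ in both transitions. Since $\tau$ does not depend on the position (the setting of this section), this is a genuine coupling: $X\sim\pi(x,\cdot)$ and $\tilde X\sim\pi(\tilde x,\cdot)$, so $\law(X,\tilde X)\in\J(\pi(x,\cdot),\pi(\tilde x,\cdot))$.

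The computation then rests entirely on the closed-form Gaussian leapfrog flow \eqref{eq:GaussianLF}. Projecting onto the position component gives, for $t/h\in\mathbb Z$,
\[ \Pi(\Phi_h^{t/h}(x,v))\ =\ \cos(\beta_h t)\,x+\frac{\sin(\beta_h t)}{(1-h^2/4)^{1/2}}\,v\;. \]
Because the $v$-dependent term is identical in the two copies, it cancels in the difference, leaving
\[ X-\tilde X\ =\ \cos(\beta_h T)\,(x-\tilde x)\;,\qquad\text{hence}\qquad |X-\tilde X|\ =\ |\cos(\beta_h T)|\,|x-\tilde x|\;. \]
Taking expectations over $T\sim\tau$, which is independent of $x$ and $\tilde x$, and using that $\mathcal W^1_{|\cdot|}$ is an infimum over couplings,
\[ \mathcal W^1_{|\cdot|}\bigr(\pi(x,\cdot),\,\pi(\tilde x,\cdot)\bigr)\ \leq\ \mathbb E\,|X-\tilde X|\ =\ \int|\cos(\beta_h t)|\,\tau(dt)\,|x-\tilde x|\;, \]
which is the assertion.

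I do not expect a genuine obstacle here: the lemma is in essence the observation that the Gaussian leapfrog map is affine in $x$ with (random) linear part $\cos(\beta_h T)$ and state-independent randomization, so a synchronous coupling contracts by exactly the factor $|\cos(\beta_h T)|$ in each step. The only points deserving explicit mention are that the synchronous coupling has the correct marginals — which only requires the two chains to share a common path-length distribution, automatic in the state-independent setting — together with the record of the flow \eqref{eq:GaussianLF} already derived. What this lemma does \emph{not} address, and which will be the real work afterwards, is showing that $\int|\cos(\beta_h t)|\,\tau(dt)$ is bounded strictly away from $1$ for the triangular $\tau=\tau_k$ of Uniform HMC; the present lemma merely isolates that average as the one-step contraction rate to be estimated.
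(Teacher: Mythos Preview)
Your proposal is correct and matches the paper's proof essentially line for line: the paper also uses the synchronous coupling with shared $v\sim\gamma$ and $T\sim\tau$, inserts the explicit leapfrog flow \eqref{eq:GaussianLF} to cancel the velocity term, and concludes via $\mathcal W^1_{|\cdot|}\le\mathbb E|X-\tilde X|=\int|\cos(\beta_h t)|\,\tau(dt)\,|x-\tilde x|$. Your remark that the coupling requires a state-independent $\tau$ is exactly the implicit assumption under which the lemma is stated and proved.
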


For Uniform HMC with orbit length $h(2^k-1)$ bounded from below by an absolute constant, the integral on the right hand side with $\tau_k$ is bounded above by $1-\rho$ for an absolute constant $\rho>0$, so that
\[ \mathcal W^1_{|\cdot|}\bigr(\pi_{\mathrm{UnifHMC}}(x,\cdot),\,\pi_{\mathrm{UnifHMC}}(\tilde x,\cdot)\bigr)\ \leq\ (1-\rho)\,|x-\tilde x| \;. \]

\begin{proof}
Let $x,\tilde x\in\mathbb R^d$ and consider the synchronous coupling
\[ (X,\widetilde X)\ =\ \bigr(\Pi(\Phi_h^{t/h}(x,v)),\,\Pi(\Phi_h^{T/h}(\tilde x,v))\bigr) \]
of $\pi(x,\cdot)$ and $\pi(\tilde x,\cdot)$ obtained by using the same path length $T\sim\tau$ and velocity $v\sim\gamma$ for both copies.
It then holds
\[ \mathcal W^1_{|\cdot|}\bigr(\pi(x,\cdot),\,\pi(\tilde x,\cdot)\bigr)\ \leq\ \mathbb E|X-\widetilde X|\ \leq\ \mathbb E|\cos(\beta_h T)|\,|x-\tilde x|\ =\ \int|\cos(\beta_h t)|\tau(dt)\,|x-\tilde x|\;, \]
where we inserted the explicit leapfrog flow \eqref{eq:GaussianLF}.
\end{proof}

\medskip

\begin{lemma}\label{lem:OS}
    Let $\pi$ be a kernel of the form \eqref{eq:HMCkernel} with invariant distribution $\gamma$.
	Then, it holds for all $B$ such that $\pi\,\mathbb Z\cap h\,\mathbb Z\subseteq B\subseteq h\,\mathbb Z$ and all $x,\tilde x\in\mathbb R^d$
	\[ \TV\bigr(\pi(x,\cdot),\,\pi(\tilde x,\cdot)\bigr)\ \leq\ \int_{B^c}|\cot(\beta_h t)|\tau(dt)\,|x-\tilde x|\ +\ \tau(B)\;. \]
\end{lemma}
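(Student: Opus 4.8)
The plan is to build a coupling of $\pi(x,\cdot)$ and $\pi(\tilde x,\cdot)$ that uses a common path length $T\sim\tau$ and then, conditionally on $T$, tries to make the two position outputs coincide exactly. Recall from \eqref{eq:GaussianLF} that, for a fixed path length $t$, the position output of a transition from $x$ with velocity $v$ is $\Pi(\Phi_h^{t/h}(x,v)) = \cos(\beta_h t)\,x + \frac{\sin(\beta_h t)}{(1-h^2/4)^{1/2}}\,v$. So once $T=t$ is fixed, the output is an invertible affine function of $v$ whenever $\sin(\beta_h t)\neq 0$, i.e.\ whenever $\beta_h t\notin\pi\mathbb Z$; since $\beta_h t\in\pi\mathbb Z$ is equivalent to $t$ lying in a certain set, and $B\supseteq\pi\mathbb Z\cap h\mathbb Z$ is designed to absorb exactly the bad path lengths, the affine map is invertible for every $t\in B^c$ (one should check that $\beta_h t\in\pi\mathbb Z \Leftrightarrow t\in\pi\mathbb Z$ up to the $\beta_h$ rescaling — more precisely that the set of $t\in h\mathbb Z$ with $\sin(\beta_h t)=0$ is contained in $B$; this is where the hypothesis $\pi\mathbb Z\cap h\mathbb Z\subseteq B$ is used, modulo the harmless reparametrization by $\beta_h$).

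The key step is the conditional coupling. First draw $T\sim\tau$. On the event $T\in B$ (probability $\tau(B)$), give up and couple $v$ however (e.g.\ independently); this contributes the additive $\tau(B)$ term to the total variation bound via the coupling inequality. On the event $T=t\in B^c$, I want to choose $v\sim\gamma$ for the first chain and $\tilde v\sim\gamma$ for the second so that the outputs agree, i.e.\ $\cos(\beta_h t)x + \frac{\sin(\beta_h t)}{(1-h^2/4)^{1/2}}v = \cos(\beta_h t)\tilde x + \frac{\sin(\beta_h t)}{(1-h^2/4)^{1/2}}\tilde v$. Solving, this forces $\tilde v = v + \frac{(1-h^2/4)^{1/2}\cos(\beta_h t)}{\sin(\beta_h t)}(x-\tilde x) = v + (1-h^2/4)^{1/2}\cot(\beta_h t)(x-\tilde x)$. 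So I use a reflection/maximal coupling of $\gamma$ and its shift by the fixed vector $w_t := (1-h^2/4)^{1/2}\cot(\beta_h t)(x-\tilde x)$; the maximal coupling of $\gamma(\cdot)$ and $\gamma(\cdot - w_t)$ fails to couple with probability $\TV(\gamma, \gamma(\cdot - w_t))$, which for the standard Gaussian is bounded by $\tfrac12|w_t| \le \tfrac12 (1-h^2/4)^{1/2}|\cot(\beta_h t)|\,|x-\tilde x| \le |\cot(\beta_h t)|\,|x-\tilde x|$ (absorbing the constant $\tfrac12(1-h^2/4)^{1/2}\le 1$).

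Putting it together: by the coupling inequality,
\[
\TV\bigr(\pi(x,\cdot),\,\pi(\tilde x,\cdot)\bigr)\ \leq\ \mathbb P(X\neq\widetilde X)\ \leq\ \tau(B)\ +\ \int_{B^c}\mathbb P\bigr(X\neq\widetilde X \mid T=t\bigr)\,\tau(dt)\ \leq\ \tau(B)\ +\ \int_{B^c}|\cot(\beta_h t)|\,|x-\tilde x|\,\tau(dt),
\]
which is the claim. The main obstacle I anticipate is the bookkeeping around $\beta_h$: the displayed bound is phrased with $\cot(\beta_h t)$ and $B$ is phrased with $\pi\mathbb Z$, so one must be careful that $B^c$ really does avoid all zeros of $\sin(\beta_h t)$ on $h\mathbb Z$ (equivalently that $\tau$ assigns no mass to such points outside $B$), so that the conditional coupling is well-defined $\tau$-a.e.\ on $B^c$ and the integrand is finite $\tau$-a.e.; granting the reduction in the lemma's hypotheses this is routine, and the Gaussian TV-versus-shift estimate $\TV(\mathcal N(0,I),\mathcal N(w,I))\le\tfrac12|w|$ is standard.
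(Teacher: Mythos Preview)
Your proposal is correct and follows essentially the same approach as the paper: draw a common path length $T\sim\tau$, and on $\{T\in B^c\}$ use a maximal coupling of $\gamma$ with its shift by $w_t=(1-h^2/4)^{1/2}\cot(\beta_h t)(x-\tilde x)$ so that the leapfrog outputs coincide. The only cosmetic difference is that the paper bounds $\TV(\gamma,\gamma+w_t)$ via Pinsker's inequality (yielding $|w_t|$) whereas you invoke the sharper Gaussian shift bound $\tfrac12|w_t|$; both are then absorbed into $|\cot(\beta_h t)|\,|x-\tilde x|$ via $(1-h^2/4)^{1/2}\le 1$.
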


Differentiating between $B$ and $B^c$ is necessary if $\tau(\pi\,\mathbb Z)\ne0$ since $|\cot|=\infty$ in these points.  Generally, $|\cot|$ diverging as the path length approaches $\pi\,\mathbb Z$ reflects the period which limits the effective motion of the coupled copies and therefore their ability to meet exactly.  Depending on $\tau$, $B$ should be chosen to minimize the right hand side.

For Uniform HMC with orbit length $h(2^k-1)$ bounded from below by an absolute constant, one can choose $B$ such that
\[ \TV\bigr(\pi_{\mathrm{UnifHMC}}(x,\cdot),\,\pi_{\mathrm{UnifHMC}}(\tilde x,\cdot)\bigr)\ \leq\ C_{Reg}\,|x-\tilde x|\ +\ c \]
for absolute constants $C_{Reg}>0$ and $0<c<1$, balancing the two terms in the lemma.

\begin{proof}
Let $x,\tilde x\in\mathbb R^d$, $T\sim\tau$ and $v,\tilde v\sim\gamma$.
Consider the coupling
\[ (X,\widetilde X)\ =\ \bigr(\Pi(\Phi_h^{T/h}(x,v)),\,\Pi(\Phi_h^{T/h}(\tilde x,\tilde v))\bigr) \]
of $\pi(x,\cdot)$ and $\pi(\tilde x,\cdot)$ using the same path length but distinct velocities $v$ and $\tilde v$ coupled such that with maximal probability
\[ \tilde v\ =\ v+\cot(\beta_hT)(1-h^2/4)^{1/2}\,(x-\tilde x) \]
in which case the copies meet, i.e., $X=\widetilde X$.
Writing $m(t)=\cot(\beta_ht)(1-h^2/4)^{1/2}\,(x-\tilde x)$, we then have for any $B$ such that $\pi\,\mathbb Z\cap h\,\mathbb Z\subseteq B\subseteq h\,\mathbb Z$
\begin{align*}
	\TV\bigr(\pi(x,\cdot),\,\pi(\tilde x,\cdot)\bigr)
\ &\leq\ \mathbb P(X\ne\widetilde X)
\ \leq\ \int_{B^c}\mathbb P\bigr(\tilde v\ne v+m(t)\bigr)\tau(dt)+\tau(B) \\
&=\ \int_{B^c}\TV\bigr(\gamma+m(t),\gamma\bigr)\tau(dt)+\tau(B)\;,
\end{align*}
where we used maximality of the coupling.
By Pinsker's inequality
\[ \TV\bigr(\gamma+m(t),\gamma\bigr)\ \leq\ \sqrt2\, \mathrm{H}\bigr(\gamma\bigr|\gamma+m(t)\bigr)^{1/2}\ =\ |m(t)| \]
so that
\[ \TV\bigr(\pi(x,\cdot),\,\pi(\tilde x,\cdot)\bigr)\ \leq\ \int_{B^c}|m(t)|\tau(dt)+\tau(B)\ \leq\ \int_{B^c}|\cot(\beta_h t)|\tau(dt)\,|x-\tilde x| + \tau(B)\;. \]
\end{proof}

\section{Mixing of NUTS: Proof of Main Result}\label{sec:NUTSproof}

\begin{proof}[Proof of Theorem \ref{thm:NUTSmixing}]

We apply the framework for the mixing of accept/reject Markov chains in the high acceptance regime, as introduced in Section \ref{sec:ARchains}, to NUTS.
Let the step size $h$ satisfy $0<h\leq1$ and condition \eqref{eq:h_rest}. Further, assume the maximum orbit length, $h(2^{k_{\mathrm{max}}}-1)$, to be bounded from below by an absolute constant.
The state space $S=\mathbb R^d$ is the Euclidean space, and the domain $D$ is defined to be a spherical shell $D_\alpha$ where $\alpha\leq d$, as described in \eqref{eq:D_alpha}.

Let $0\leq r\leq d$, and define $\delta$ as in \eqref{eq:u-turn_delta}.  Lemma \ref{prop:NUTStoUnifHMC} then ensures that, within the shell $D_\alpha$, NUTS fits the definition of an accept/reject chain, with the accept kernel $\pi_{\A}$ corresponding to Uniform HMC with an index orbit length $2^{\min(k_\ast,k_{\mathrm{max}})}$, where $k_\ast$ is given in Lemma \ref{prop:u-turn}. The corresponding acceptance event is given by $A(x)=E_{\alpha,r}\cap A_{\mathrm{index}}(x)$ where $E_{\alpha,r}$ is given in Lemma \ref{lem:E} and $A_{\mathrm{index}}(x)$ is given in Lemma \ref{lem:MultHMCtoUnifHMC}.  The orbit length for this instance of Uniform HMC $h(2^{\min(k_\ast,k_{\mathrm{max}})}-1)$ is bounded from below by an absolute constant due to the assumption on the maximal orbit length of NUTS, and since $h(2^{k_\ast}-1)\geq\pi$ by Lemma \ref{prop:u-turn}. 

Next, we verify Assumptions (i)-(iv) of Theorem \ref{thm:ARmix}.  For simplicity, we treat double logarithmic terms in both the dimension $d$ and the inverse accuracy $\eps^{-1}$ as absolute constants.

Wasserstein contractivity (Assumption (i)) and total variation to Wasserstein regularization (Assumption (ii)) hold for the given instance of Uniform HMC with absolute constants $\rho,C_{Reg}>0$ and $0<c<1$, as established by Lemmas \ref{lem:contr} and \ref{lem:OS}. This is because the orbit length in physical time is bounded from below by an absolute constant.

For Assumption (iii), we require
\begin{equation}\label{eq:requiredBd}
    2\,\mathfrak E\,\sup\nolimits_{x\in D_\alpha}\mathbb{P}\bigr((E_{\alpha,r}\cap A_{\mathrm{index}}(x))^c\bigr)\ +\ C_{Reg}\,\diam_{|\cdot|}(D_\alpha)\,\exp\bigr(-\rho(\mathfrak E-1))\ +\ b\ \leq\ 1\ -\ c
\end{equation}
to hold for some constant $b>0$.
Inserting $\rho,C_{Reg}$ and $c$ together with the bound
\[ \sup\nolimits_{x\in D_\alpha}\mathbb P\bigr((E_{\alpha,r}\cap A_{\mathrm{index}}(x))^c\bigr)\ \leq\ 4\,e^{-r^2/8d}+h^2\max(\alpha,r)+\frac14h^4d \]
as stated in Lemma \ref{prop:NUTStoUnifHMC} together with the fact that $\diam_{\mathsf{d}}(D_\alpha)\leq2\sqrt{2d}$ for $\alpha\leq d$, we can conclude the existence of absolute constants $b>0$ and $c',C',C''>0$ such that \eqref{eq:requiredBd} holds with
\[ \mathfrak E\ =\ C'\log d\;,\quad r\ =\ C''\sqrt d\;,\quad\text{and all}\quad h\ \leq\ \bar h\ =\ c'\max(\alpha,\sqrt d)^{-1/2}\log^{-1/2}d\;. \]

Set the horizon $\mathfrak H=C'''\log d\log\eps^{-1}$ for an absolute constant $C'''>0$. For Assumption (iv), we  require exit probability bounds over this horizon for NUTS initialized  in both the initial distribution $\nu$ and in the stationary distribution $\mu$. By Lemma \ref{lem:stab_leapfrog}, if we take $\alpha=C''''\max\bigr(\alpha_0,\sqrt d\log d\log^{3/2}\eps^{-1}\bigr)$ with some absolute constant $C''''>0$, it holds for all $x\in D_{\alpha_0}$ that $\mathbb P_x(T_\alpha\leq \mathfrak H)\leq\eps/8.$
Additionally, with the assumption that $\max\bigr(\nu(D_{\alpha_0}^c),\gamma(D_{\alpha_0}^c)\bigr)\leq\eps/8$, this implies the desired exit probability bounds, completing the requirements for Assumption (iv).

Thus, all the assumptions of Theorem \ref{thm:ARmix} are satisfied, thereby establishing the required upper bound on the mixing time of NUTS.
\end{proof}

\printbibliography

\end{document}